\newtheorem{theorem}{Theorem}[section]
\newtheorem{lemma}[theorem]{Lemma}
\newtheorem{proposition}[theorem]{Proposition}
\newtheorem{corollary}[theorem]{Corollary}
\newtheorem{remark}[theorem]{Remark}
\newtheorem{question}[theorem]{Question}
\newtheorem{definition}[theorem]{Definition}
\numberwithin{equation}{section}
\begin{document}

\newcommand{\cc}{\mathfrak{c}}
\newcommand{\N}{\mathbb{N}}
\newcommand{\BB}{\mathbb{B}}
\newcommand{\C}{\mathbb{C}}
\newcommand{\Q}{\mathbb{Q}}
\newcommand{\R}{\mathbb{R}}
\newcommand{\T}{\mathbb{T}}
\newcommand{\st}{*}
\newcommand{\PP}{\mathbb{P}}
\newcommand{\QQ}{\mathbb{Q}}
\newcommand{\GG}{\mathbb{G}}
\newcommand{\EE}{\mathbb{E}}
\newcommand{\FF}{\mathbb{F}}
\newcommand{\DD}{\mathbb{D}}
\newcommand{\lin}{\left\langle}
\newcommand{\rin}{\right\rangle}
\newcommand{\SSS}{\mathbb{S}}
\newcommand{\forces}{\Vdash}
\newcommand{\dom}{\text{dom}}
\newcommand{\osc}{\text{osc}}
\newcommand{\F}{\mathcal{F}}
\newcommand{\A}{\mathcal{A}}
\newcommand{\B}{\mathcal{B}}
\newcommand{\CC}{\mathcal{C}}
\newcommand{\I}{\mathcal{I}}
\newcommand{\J}{\mathcal{J}}

\author{Clayton Suguio Hida}
\address{Departamento de Matem\'atica, Instituto de Matem\'atica e Estat\'\i stica, Universidade
de S\~ao Paulo, Caixa Postal 66281, 05314-970, S\~ao Paulo, Brazil}
\email{\texttt{suguio@ime.usp.br}}
\thanks{The research of the first named author was partially supported by   doctoral scholarships
 CAPES:  1427540 and 
CNPq:    167761/2017-0 and 201213/2016-8.}

\author{Piotr Koszmider}
\address{Institute of Mathematics of the Polish Academy of Sciences,
ul. \'Sniadeckich 8,  00-656 Warszawa, Poland}
\email{\texttt{piotr.koszmider@impan.pl}}
\thanks{The research of the second named author was partially supported by   grant
PVE Ci\^encia sem Fronteiras - CNPq (406239/2013-4).}

\subjclass[2010]{03E35, 46L05, 46L85, 46J10,  54G12}
\title[Large irredundant sets in operator algebras]{
Large irredundant sets in operator algebras}

\begin{abstract} 
A subset $\mathcal X$ of a C*-algebra $\A$ is called irredundant if no $A\in \mathcal X$
belongs to the C*-subalgebra of $\A$ generated by  $\mathcal X\setminus \{A\}$. 
Separable C*-algebras cannot have uncountable irredundant sets and
all members of many classes of  nonseparable C*-algebras, e.g., infinite dimensional von Neumann algebras
 have irredundant sets of cardinality continuum.

There exists a considerable literature 
showing that the question whether every AF commutative nonseparable 
C*-algebra  has an uncountable irredundant set is sensitive to
additional set-theoretic axioms and we investigate here the noncommutative
case.

Assuming $\diamondsuit$ (an additional axiom stronger than the continuum hypothesis) we   prove that  there
is an AF  C*-subalgebra of $\B(\ell_2)$ of density 
$2^\omega=\omega_1$ with no 
nonseparable commutative C*-subalgebra and with no
uncountable irredundant set.
On the other hand we also prove that it is consistent that every discrete collection of operators in $\B(\ell_2)$ 
of cardinality continuum contains an irredundant subcollection of cardinality continuum. 

Other partial results and more open problems are presented.

\end{abstract}

\maketitle

\section{Introduction}

\begin{definition}
Let $\mathcal{A}$ be a C*-algebra. A subset $\mathcal X\subseteq \mathcal{A}$
 is called irredundant  if and only if for every $A \in \mathcal X$, the 
 C*-subalgebra of $\mathcal{A}$ generated by $\mathcal X\setminus\{A\}$ 
 does not contain $A$. 
We define $$irr(\mathcal{A}):=\sup\{|\mathcal X|: \mathcal X\
 \textrm{ is an irredundant set in } \mathcal{A}\}.$$
\end{definition}

Recall that the density of a C*-algebra $\A$, denoted $d(\A)$ is the least cardinality of a 
norm dense subset of $\A$, i.e., $\A$ is separable if and only if
$d(\A)$ is countable.
It is easy to see that $irr(\A)\leq d(\A)$ for every C*-algebra, as irredundant sets must be
norm discrete. When $\A$ is an infinite dimensional C*-algebra, then $irr(\A)$ is infinite, 
because then $\A$ contains an infinite dimensional abelian C*-subalgebra (\cite{ogasawara}) and locally compact
infinite Hausdorff spaces contain pairwise disjoint infinite collections of open sets which yield
infinite irredundant sets (Proposition \ref{orthogonal}).
In this article, we are interested in  uncountable irredundant sets
in  (C*-subalgebras of) the algebra $\B(\ell_2)$ of all linear bounded operators  on a separable Hilbert space.

Irredundant sets have been considered in the context of other structures. For example, 
a subset of a Boolean algebra is called irredundant if none of its elements belongs 
to the Boolean subalgebra generated by the remaining elements. We call
such sets Boolean irredundant (Definition \ref{def-boolean-irr}). In Banach
spaces irredundant sets, i.e., where no element belongs to the closed subspace
spanned by the remaining elements correspond exactly to biorthogonal systems
(\cite{hajek}, see \cite{invariants} for some comparisons between this type of notions). 
Examples of Boolean irredundant sets include independent families, ideal independent families
or (almost) disjoint families, but there are Boolean algebras of uncountable
irredundance with no uncountable families of the above-mentioned classes (see Remark
\ref{remark-split}). A collection $(x_\alpha, x_\alpha^*)_{\alpha<\kappa}\subseteq \B\times\B^*$
of a Banach space $\B$ is biorthogonal if $x_\alpha^*(x_\alpha)=1$ and 
$x_\alpha^*(x_\beta)=0$ for all $\alpha<\beta<\kappa$. As usually linear
functionals on a C*-algebra $\A$ are not multiplicative there are many more biorthogonal
systems than irredundant sets in $\A$, one can even consistently have 
a commutative C*-algebra $C(K)$ with countable irredundance but with 
uncountable biorthogonal systems (\cite{finitely-supp}).

One of our main motivations are consistent constructions of uncountable
Boolean algebras with no uncountable irredundant sets. 
They were first obtained 
by Rubin (\cite{rubin}) under the assumption of $\diamondsuit$\footnote{$\diamondsuit$ is an additional 
axiom (introduced by R. Jensen) which is true in the universe of constructible sets. It says that
there is a sequence $(S_\alpha)_{\alpha<\omega_1}$ which ``predicts" all subsets of $\omega_1$
in the sense that
for any $X\subseteq\omega_1$ the set $\{\alpha<\omega_1: X\cap\alpha=S_\alpha\}$
meets every closed an unbounded subset of $\omega_1$, for details see \cite{jech} or \cite{kunen}.
$\diamondsuit$ has been recently successfully applied in the context 
of nonseparable C*-algebras by Akemann, Farah, Hirshberg and Weaver 
(\cite{akemann-weaver-naimark}, \cite{akemann-weaver-pure}, \cite{farah-hirshberg}).
We will not use the $\diamondsuit$ directly but will apply its consequence from
Theorem \ref{diamond-scheme} which was developed by S. Todorcevic in \cite{stevo-scheme}.} and then by Kunen
(\cite{negrepontis}) under  the continuum hypothesis CH (improved
further by Todorcevic to a
$\mathfrak{b}=\omega_1$ construction from 2.4 of \cite{stevo-partition}). 
Also some versions of the classical Ostaszewski's construction assuming $\diamondsuit$
from \cite{ostaszewski} have these properties as  further constructions assuming
$\clubsuit$ from \cite{hajek} as well as forcing
constructions from \cite{bgt}, \cite{finitely-supp}, \cite{rolewicz}.

Some of the above constructions are of Boolean algebras and other
of (locally) compact Hausdorff totally disconnected spaces. Using the Stone duality
one translates one language to the other easily.
The fact that  the Kunen or Ostaszewski types of constructions mentioned above 
correspond to superatomic Boolean algebras
or equivalently their Stone spaces are 
scattered  spaces (every subset has a relative isolated point)
yields the
equality between  the Boolean irredundance 
of the Boolean algebra and  the irredundance 
of the commutative C*-algebra of continuous functions (Corollary
\ref{cor-boolean}). In particular  the corresponding
$C(K)$s have no uncountable irredundant sets. In fact the scatteredness can
be exploited further to prove that the Banach spaces $C(K)$ have no ucountable
biorthogonal systems (\cite{negrepontis}, \cite{hajek}).

The first question we considered was whether such phenomena can take place
 if the C*-algebra is made considerably noncommutative. One of our main results is:

\begin{theorem}\label{theorem-diamond} Assume $\diamondsuit$. There is a fully noncommutative
nonseparable scattered C*-algebra (of operators in $\B(\ell_2)$)  with no nonseparable commutative subalgebra and
with no uncountable irredundant set.
\end{theorem}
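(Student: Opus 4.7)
The plan is to construct $\A$ as the norm-closure of an increasing union $\bigcup_{\alpha<\omega_1}\A_\alpha$, where each $\A_\alpha$ is a separable AF subalgebra of $\B(\ell_2)$, built by transfinite recursion guided by the $\diamondsuit$-prediction scheme cited in the footnote as Theorem \ref{diamond-scheme}. At a limit $\alpha$ I take $\A_\alpha=\overline{\bigcup_{\beta<\alpha}\A_\beta}$; at a successor $\alpha+1$ I extend by adjoining a single finite-dimensional matrix block in a fresh corner of $\B(\ell_2)$ chosen according to the $\diamondsuit$-guess $S_\alpha$. Keeping every new block in a corner orthogonal to all previously used ones ensures each $\A_{\alpha+1}$ is AF and extends a composition series with elementary quotients, so that $\A$ is scattered; the iteration reaches density exactly $\omega_1$.

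The prediction $S_\alpha$ would encode a countable candidate substructure of $\A_\alpha$ that could grow to one of three forbidden objects in $\A$: an uncountable irredundant set $\mathcal X$, a nonseparable commutative C*-subalgebra $\mathcal C$, or a commutative subquotient witnessing failure of full noncommutativity. For the first, the successor extension is designed so that one specified member of the predicted portion of $\mathcal X$ is forced to lie in the C*-algebra generated by the others, by inserting matrix units that realize the required polynomial identity. For the second, it inserts a matrix unit failing to commute with a predicted near-central element of $\mathcal C$. For the third, it places a $2\times 2$ block in any corner that has remained abelian. A standard bookkeeping merges all three tasks into one recursion. At the end, for any uncountable $\mathcal X\subseteq\A$, the Todorcevic scheme supplies a club of stages $\alpha$ at which $S_\alpha$ correctly predicts $\mathcal X\cap\A_\alpha$, and the successor $\alpha+1$ has already forced an algebraic dependence in $\mathcal X$; analogous club arguments dispose of candidate commutative subalgebras and candidate commutative subquotients.

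The main obstacle will be the mutual compatibility of the three killing tasks: forcing algebraic dependence may accidentally produce central elements useful for commutativity, while adding a noncommutation relation may reintroduce irredundant configurations among previously placed generators. The delicate step is to exploit the freedom to place each new block in a corner orthogonal to all finitely many previously-used ones so that every successor extension performs its required operation in a \emph{localized} way that does not perturb the relations forced at any earlier stage. A second, more technical issue is arranging the construction so that the three kinds of predictions are encoded by a single $\diamondsuit$-sequence and so that the algebraic relations forced in $\A_{\alpha+1}$ survive passing to the norm closure at limit stages --- here one needs that the identities one imposes are \emph{finitely supported} and therefore stable under the direct limit. Once this localization and stability are in place, the three closed-unbounded arguments combine, yielding the desired $\A$.
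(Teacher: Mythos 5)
Your proposal has two fatal problems, both of which explain why the paper's actual construction looks nothing like a stage-by-stage ``predict and kill'' recursion. First, the plan of adjoining each new finite-dimensional block ``in a fresh corner of $\B(\ell_2)$ orthogonal to all previously used ones'' immediately defeats the purpose: it produces an uncountable family of pairwise orthogonal projections (one per successor stage), which by Proposition \ref{orthogonal} is already an uncountable irredundant set and which also generates a nonseparable commutative subalgebra isomorphic to $c_0(\omega_1)$. The entire difficulty of the construction is that every new generator must act nontrivially on (be entangled with) all the previously built columns --- this is exactly what condition (4) of Definition \ref{def-P}, $A^p_{\xi,m,n}=(A^p_{\xi,m,n}|\xi)+1_{\xi,m,n}$, encodes, and the three amalgamation types exist precisely to control how these ``tails'' on earlier columns interact. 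Localization in orthogonal corners is not a technical convenience to be exploited; it is the one thing the construction must avoid.

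Second, the killing mechanism you describe cannot work even in principle: if $\mathcal X$ is a set of already-constructed elements of $\A_\alpha$ and $x\in\mathcal X$, then whether $x$ belongs to the C*-subalgebra generated by $\mathcal X\setminus\{x\}$ is determined solely by the operators themselves (it is the norm-closure of the $*$-polynomials in $\mathcal X\setminus\{x\}$, and norms do not change under embeddings), so no later extension of the ambient algebra can ``force'' such a dependence. The paper circumvents this by never fixing the relevant elements early: each generator $A^\GG_{\xi,m,n}$ is only fully determined at the end of the construction, its restriction to later columns being decided by amalgamations dictated by the construction scheme. The role of $\diamondsuit$ is then not to predict countable initial segments of forbidden uncountable subsets of the algebra, but only to provide (via Theorem \ref{diamond-scheme}) a construction scheme whose capturing property guarantees that \emph{any} uncountable $\Delta$-system of finite supports has a subsystem captured by some $F\in\F$ at which an amalgamation of type 2 (forcing $\|P_{\xi_1}-P_{\xi_2}P_{\xi_3}\|<\varepsilon$, hence no uncountable irredundant set) or type 3 (forcing $\|[P_{\xi_1},P_{\xi_2}]\|>1/2-\varepsilon$, hence no nonseparable commutative subalgebra) was performed. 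Without replacing your two central mechanisms by something of this kind, the argument does not get off the ground.
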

\begin{proof} Apply Theorems \ref{diamond-scheme} and \ref{theorem-main}.
\end{proof}

Here scattered C*-algebras are the noncommutative analogues of the scattered locally compact spaces. 
The condition of being fully noncommutative means that these algebras are 
``maximally noncommutative" among scattered algebras. These notions are reviewed in
Section 2.1. 

Another motivation for our project was the result of Todorcevic (\cite{stevo-irr}, \cite{stevo-biort})
that assuming Martin's axiom MA and the negation of the CH every uncountable Boolean algebra
has an uncountable irredundant set. Here the main question remains open:

\begin{question}\label{question-main} Is it consistent that every nonseparable
(AF, scattered) C*-algebra (of operators in $\B(\ell_2)$)
contains an uncountable irredundant set?
\end{question}

It should be added that even the commutative general case is open, since
the result of Todorcevic provides uncountable irredundant sets in $C(K)$s
only for $K$s totally disconnected and there can be nonmetrizable compact
spaces with no totally disconnected nonmetrizable compact subspace and similar
examples (see \cite{totally}).
 So it is natural
to restrict initially the attention in the noncommutative problem to
C*-algebras  corresponding to totally disconnected spaces, namely  to approximately finite 
dimensional C*-algebras (AF), i.e., where there is a dense subset which is the union of 
a directed family of finite dimensional C*-subalgebras (see \cite{farah-katsura}
for diverse notions of approximate finite-dimensionality in the nonseparable context).
Another natural narrowing of the question is to consider only the scattered C*-algebras since 
one of the conditions equivalent to being scattered for a C*-algebra of density $\omega_1$
is that each of its C*-subalgebras is AF.
Attempting to answer Question \ref{question-main} we obtained several results which 
shed some light on it. Let us discuss them below.

If $\A$ is AF C*-algebra of density equal to the first uncountable cardinal $\omega_1$, then
it can be written as $\A=\bigcup_{\alpha<\omega_1}\A_\xi$ where $\A_\xi\subseteq \A_{\xi'}$
for all $\xi<\xi'<\omega_1$ and each $\A_\xi$ is separable and AF. It follows from
the result of Thiel in \cite{thiel} (cf. \cite{olsen}, \cite{thiel-winter})
that each $\A_\xi$ is singly generated by one element
$A_\xi\in \A_\xi$. Hence in the set $\{A_\xi: \xi<\omega_1\}$ irredundant subsets are at most singletons.
So there is no chance to extract (possibly using some
additional forcing axioms) an uncountable irredundant set from an arbitrary  norm discrete
set  of cardinality $\omega_1$ of operators in $\B(\ell_2)$.

The AF hypothesis allows nevertheless to avoid sets of operators as above. Namely,
if $\A={\overline{\bigcup_{D\in \DD}\A_D}}$, where all $\A_D$s are finite-dimensional
and $\A_D\subseteq \A_{D'}$ whenever $D\leq D'$ for $D\in \DD$ and $(\DD, \leq)$ is directed,
then given any norm discrete $\{A_\xi: \xi<\omega_1\}\subseteq \bigcup_{D\in \DD}\A_D$,
which exists by the nonseparability of $\A$,
for every finite $F\subseteq \omega_1$ the set
$$X_F=\{\xi<\omega_1: A_\xi\in \A_F\}$$
is a finite superset of $F$, where $\A_F$
is the C*-subalgebra generated by $\{A_\eta: \eta\in F\}$. So, the search for
an uncountable irredundant set among $\{A_\xi: \xi<\omega_1\}$ is equivalent to
the search for an uncountable  $X\subseteq\omega_1$ such that $X_F\cap X=F$
 for every $F\subseteq X$. 
 
 However this combinatorial problem
 for a general  function from finite subsets of $\omega_1$ to themselves 
 has the negative solution\footnote{It is enough to take $X_F$ to be of the form 
 $Y\cap[(\max F)+1]$ where $Y\in\mu$ is of minimal rank which contains $F$
 and where $\mu$ is an $(\omega,\omega_1)$-cardinal as in \cite{2cardinals}.
 $\mu$ is originally due to Velleman (\cite{zfc-morass}). A  positive result
 for general functions is that given 
 $n\in \N$ and a function $\phi$ from finite subsets of the $n$-th
 uncountable cardinal  $\omega_n$ into countable subsets of $\omega_n$ there 
is an $n$-element set $X\subseteq \omega_n$ such that $\xi\not\in \phi(X\setminus\{\xi\})$
for any $\xi\in X$. In particular, this gives that any norm discrete subset
of  cardinality $\omega_n$ in any C*-algebra
has an irredundant subset of cardinality $n$.}. Nevertheless
passing to the second uncountable cardinal $\omega_2$ allows for a very general
consistency result:

\begin{theorem}\label{theorem-cohen} It is consistent that $2^\omega=\omega_2$
and for every norm discrete collection of operators 
 $(A_\xi: \xi<\omega_2)$ in $\B(\ell_2)$ 
  there is a subset $X\subseteq\omega_2$ 
 of cardinality $\omega_2$ such that $(A_\xi: \xi\in X)$ is irredundant.
\end{theorem}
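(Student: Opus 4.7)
The plan is to use Cohen forcing: I would start with $V \models \text{CH}$ and force with $\PP = \text{Fn}(\omega_2, 2)$, which is CCC, preserves cardinals, and gives $2^\omega = \omega_2$ in $V[G]$. Given any norm discrete family $(A_\xi : \xi < \omega_2)$ in $\B(\ell_2)^{V[G]}$, each $A_\xi$ is determined by its countable sequence of matrix coefficients $(\langle A_\xi e_i, e_j\rangle)_{i,j\in\omega}$, so the CCC lets me pick, for each $\xi$, a countable $S_\xi \subseteq \omega_2$ with $S_\xi \in V$ and $A_\xi \in V[G \upharpoonright S_\xi]$.

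I would then thin the family to an $X \in [\omega_2]^{\omega_2}$ on which $\{S_\xi : \xi \in X\}$ forms a $\Delta$-system with root $\Delta_0$. Since $\omega_1^\omega = \omega_2$ in $V[G]$, the standard $\Delta$-system lemma does not apply, so I would exploit the ordinal structure of $\omega_2$: for $\xi$ of cofinality $\omega_1$ one has $\sup(S_\xi \cap \xi) < \xi$, so Fodor applied in $V[G]$ produces a stationary $T$ and a fixed $\beta < \omega_2$ bounding these suprema, and pigeonhole on $[\beta]^\omega$ (of size $\omega_1$) refines to $T' \subseteq T$ of size $\omega_2$ on which $S_\xi \cap \beta = \Delta_0$ is constant. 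A transfinite recursion choosing, at each step, the next $\xi \in T'$ above the supremum of all previously chosen $S$'s would then yield the desired $X$, with $T_\xi := S_\xi \setminus \Delta_0$ pairwise disjoint for $\xi \in X$.

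The heart of the proof would be the claim that for each $\xi \in X$, if $A_\xi \in C^*(A_\eta : \eta \in X \setminus \{\xi\})^{V[G]}$ then $A_\xi \in V[G \upharpoonright \Delta_0]$. Splitting $G = G_0 \cup G_1$ with $G_0 = G \upharpoonright (\omega_2 \setminus T_\xi)$ and $G_1 = G \upharpoonright T_\xi$, the $\Delta$-system forces each $A_\eta$ ($\eta \ne \xi$ in $X$) into $V[G_0]$. A mutual-genericity argument for Cohen forcing---using a further homogenization of the names $\dot A_\xi$ to a common template on the $T_\xi$'s, together with approximation by polynomials having $\Q[i]$-coefficients---would give $A_\xi \in V[G_0]$; the intersection property for product Cohen forcing, $V[G_0] \cap V[G \upharpoonright S_\xi] = V[G \upharpoonright \Delta_0]$, would then finish the claim. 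Since $V \models \text{CH}$ and $|\Delta_0| \leq \omega$, $V[G \upharpoonright \Delta_0]$ contains at most $\omega_1$ operators on $\ell_2$; by norm discreteness at most $\omega_1$ indices $\xi \in X$ fall into this submodel, so removing them would yield an irredundant $X' \in [X]^{\omega_2}$.

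The two main technical obstacles will be: (i) the adapted $\Delta$-system step, where the standard combinatorial lemma fails in $V[G]$ and must be replaced by the Fodor--pigeonhole--recursion construction above; and (ii) the $A_\xi \in V[G_0]$ step, since a priori a $V[G]$-sequence of polynomials with generators in $V[G_0]$ need not itself be in $V[G_0]$, so the name-homogenization would be essential to prevent $A_\xi$ from depending genuinely on $G_1$.
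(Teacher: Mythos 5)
Your global architecture (Cohen forcing over a model of CH, countable supports via nice names and the ccc, a $\Delta$-system refinement, homogenization of the names, and a symmetry/genericity argument) is essentially the paper's, and your $\Delta$-system step, though routed through Fodor and a pigeonhole on $([\beta]^{\leq\omega})^V$ rather than through the $\Delta$-system lemma applied in $V$ to the supports of the nice names, is workable. The gap is in your central claim that $A_\xi \in C^*(A_\eta : \eta \in X\setminus\{\xi\})^{V[G]}$ forces $A_\xi \in V[G_0]$ and hence $A_\xi \in V[G\upharpoonright \Delta_0]$. This implication is false as a general principle, and mutual genericity cannot rescue it: the C*-algebra generated in $V[G]$ by a family lying in $V[G_0]$ is the norm closure, computed in $V[G]$, of the set of $\Q[i]$-$*$-polynomials in countably many of the generators, and Cohen forcing adds new limit points to any infinite set of operators from the intermediate model (just as a Cohen real lies in the closure of $\Q$ without lying in the ground model). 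So knowing that $A_\xi$ lives in $V[G\upharpoonright S_\xi]$ and lies in the closure of a countable set from $V[G_0]$ does not place $A_\xi$ in $V[G_0]$; the caveat you raise yourself about a $V[G]$-sequence of polynomials not being in $V[G_0]$ is exactly the problem, and name homogenization by itself does not remove it.

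What is missing is a finitization with a uniform gap, which is how the paper closes this hole. Using the hypothesis (norm discreteness; after thinning one may assume the family is $\varepsilon$-separated and generates an algebra of density continuum), one first passes to a subfamily and a single rational $\varepsilon>0$ such that $\mathrm{dist}(A_\xi, C^*(A_\eta:\eta\in F))>\varepsilon$ for every finite $F\subseteq\xi$. Non-membership in $C^*(A_\eta:\eta\in c)$ for a countable $c$ then reduces to the conjunction of the finitary statements $\mathrm{dist}(\dot A_\xi, C^*(\dot A_\eta:\eta\in F))>\varepsilon$ over finite $F\subseteq c$, because the distance to the closure of $\bigcup_F C^*(A_\eta:\eta\in F)$ is the infimum of the distances to the finitely generated pieces and hence stays $\geq\varepsilon>0$. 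These finitary statements are transported from the case $F\subseteq\xi$ to an arbitrary finite $F\not\ni\xi$ by the automorphisms of $\PP$ induced by permutations of $\omega_2$ (moving $\xi$ above the countable set), which is where your homogenization of names is actually needed. Without the uniform $\varepsilon$ and the reduction to finitely generated subalgebras, the symmetry or genericity argument has nothing quantitative to transport, and your key claim does not close.
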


This is not a mere consequence of $\B(\ell_2)$ having density $\omega_2$ because 
 by a result of Brech and Koszmider (\cite{biort-tams})
it is consistent that there  exists  a commutative C*-subalgebra of $\ell_\infty$
of density  $2^\omega=\omega_2$ with no uncountable irredundant set.
The cardinal $\omega_2$ in Theorem \ref{theorem-cohen} can be replaced by any regular cardinal bigger than $\omega_1$
but it is not known if the result of \cite{biort-tams} can be generalized to bigger cardinals 
 than  $\omega_2$. Combining \ref{theorem-cohen},
 \ref{theorem-diamond} and knowing that $\diamondsuit$ implies CH we obtain:

\begin{corollary} It is independent from ZFC whether there is
a norm discrete collection of operators (projections) $(A_\xi: \xi<2^\omega)$ 
in $\B(\ell_2)$ with no uncountable (of cardinality $2^\omega$) irredundant subcollection of size $2^\omega$. 

It is independent from ZFC whether there is
C*-subalgebra of
 $\B(\ell_2)$ of density $2^\omega$ with no uncountable (of size $2^\omega$) irredundant set.
\end{corollary}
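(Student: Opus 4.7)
Both assertions follow from bookkeeping on top of Theorems \ref{theorem-diamond} and \ref{theorem-cohen}. The key observation is that $\diamondsuit$ implies $2^\omega=\omega_1$, whereas in the model of Theorem \ref{theorem-cohen} we have $2^\omega=\omega_2$; so under $\diamondsuit$ the phrases ``uncountable'' and ``of cardinality $2^\omega$'' coincide, while under the Cohen model Theorem \ref{theorem-cohen} itself directly rules out the pathology. So one direction of each independence statement is supplied by one of the main theorems.

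For the consistency of \emph{existence}, I would assume $\diamondsuit$ and let $\A$ be the nonseparable scattered AF C*-subalgebra of $\B(\ell_2)$ given by Theorem \ref{theorem-diamond}, which has density $\omega_1=2^\omega$ and no uncountable irredundant set. The algebra $\A$ itself witnesses the second half of the corollary. For the first half I would extract a norm discrete sequence $(A_\xi:\xi<\omega_1)$ inside $\A$ via the standard metric-space argument: for each $n$ fix a maximal $(1/n)$-separated subset $D_n\subseteq \A$; then $\bigcup_n D_n$ is norm-dense, hence uncountable, so some $D_n$ has size $\omega_1=2^\omega$. Because $\A$ is scattered, its projections are norm-dense, so one can just as well select the $A_\xi$ to be projections, covering the stronger parenthetical form. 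Any irredundant subcollection of $(A_\xi)_\xi$ is an irredundant subset of $\A$, hence countable.

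For the consistency of \emph{nonexistence}, I would pass to the model of Theorem \ref{theorem-cohen}, where $2^\omega=\omega_2$. If $(A_\xi:\xi<2^\omega)=(A_\xi:\xi<\omega_2)$ is any norm discrete collection in $\B(\ell_2)$, Theorem \ref{theorem-cohen} supplies $X\subseteq\omega_2$ with $|X|=\omega_2=2^\omega$ and $(A_\xi:\xi\in X)$ irredundant. For the second half, given any C*-subalgebra $\mathcal B\subseteq \B(\ell_2)$ of density $\omega_2=2^\omega$, the same maximal $(1/n)$-separated argument produces a norm discrete subset of $\mathcal B$ of size $\omega_2$ (using the regularity of $\omega_2$ to see that a countable union of sets of size $<\omega_2$ cannot be dense), and an application of Theorem \ref{theorem-cohen} yields an irredundant subset of $\mathcal B$ of cardinality $2^\omega$.

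There is no genuine obstacle here beyond citing the two main theorems and noting that $\diamondsuit\Rightarrow \mathrm{CH}$; the only small point worth spelling out is the extraction of a norm discrete subset of cardinality equal to the density in a nonseparable metric space, which is the routine maximal $(1/n)$-separated construction used above.
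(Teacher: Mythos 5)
Your proposal follows exactly the paper's route: the paper derives this corollary with no further argument beyond combining Theorems \ref{theorem-diamond} and \ref{theorem-cohen} and noting that $\diamondsuit$ implies CH, and your bookkeeping (including the maximal $(1/n)$-separated extraction of a norm discrete set of size equal to the density, using regularity of the cardinal) is the right way to fill in the routine details. One assertion is false as written: the projections of a scattered C*-algebra are \emph{not} norm-dense in the algebra; what is true (and suffices) is that $\A$ has real rank zero, so the linear span of its projections is dense, hence the set of projections is nonseparable of density $\omega_1$ and the same $(1/n)$-separation argument applied to that set yields the required uncountable norm discrete collection of projections. With that one-line repair the argument is complete and agrees with the paper.
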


The commutative results mentioned above, in fact, are most often of topological nature,
where the compact Hausdorff space under the consideration is the Stone space $K_\A$
of a Boolean algebra $\A$.
For example, the reason the above-mentioned  Boolean algebras have countable irredundance 
is that the spread\footnote{The spread of a topological space $K$, denoted by
$s(K)$ is the supremum over the cardinalities of discrete subspaces of $K$.} of 
$K_\A\times K_\A$ is countable as the finite powers of the mentioned
$K_\A$s are hereditarily separable. Namely, in general we have $irr(\A)\leq s(K_\A\times K_\A)$ 
which was first noted in \cite{heindorf} and easily follows from the characterization
of irredundant sets in the commutative case (Lemma \ref{char-comm-unital-irr}).
Also the Urysohn Lemma gives the inequality $s(K)\leq irr(C(K))$ for any
locally compact Hausdorff $K$. This argument cannot be transferred to the noncommutative setting
since we do not have so general noncommutative Urysohn Lemma (for noncommutative Uryshon Lemma see \cite{urysohn}).
  That is for 
constructing an irredundant set  of cardinality $\kappa$ in a C*-algebra $\A$ it is enough
to construct a sequence of states $(\tau_\alpha: \alpha<\kappa)$ and a sequence
of positive elements $(A_\alpha: \alpha<\kappa)$ of $\A$ such that $\tau_\alpha(A_\alpha)>0$
for all $\alpha<\kappa$ and $\tau_\alpha(A_\beta)=0$ for all distinct $\alpha, \beta<\kappa$
(Lemma \ref{lemma-biorthogonal}), but a weak$^*$ discrete set of pure states does not
produce the elements $A_\alpha$ as above due to the lack of the Urysohn Lemma for nonorthogonal
closed projections. In fact assuming the Proper Forcing Axiom, PFA every nonseparable scattered C*-algebra
has an uncountable weak$^*$ discrete set of pure states (Corollary \ref{cor-oca-discrete}), but
this does not help us in constructing an uncountable irredundant set and
answering Question \ref{question-main} in the positive in the scattered case.

A  bolder approach to Question \ref{question-main} would be to try to
answer the following question in the positive:

\begin{question}\label{problem-abelian}
Is it consistent  (with MA and the negation of CH) that every
 nonseparable scattered (or even AF) C*-algebra has
 a nonseparable commutative subalgebra in one of its quotients?
\end{question}

Note that the class of scattered C*-algebras is closed under quotients and subalgebras
and every locally compact scattered  Hausdorff space is totally disconnected, so
the positive answer to the question above and the MA result of Todorcevic mentioned above 
would give the positive answer to Question \ref{question-main} in the scattered case.

Known ZFC examples of nonseparable C*-algebras with no nonseparable commutative
subalgebras are the reduced group C*-algebra of an uncountable free group
as shown by Popa in \cite{popa} and the algebras of Akemann and Doner as shown in
\cite{tristan}. However the former is not AF (and has an
uncountable irredundant set corresponding to the free generators of the group) and the latter
has a nonseparable commutative quotient $c_0(\omega_1)$ (which also has
an obvious uncountable irredundant set). Perhaps the algebra of \cite{stable} could provide
the negative answer to Question \ref{problem-abelian}.

The reason our algebra from Theorem \ref{theorem-diamond}  does not contain a nonseparable
commutative C*-subalgebra is that given any discrete sequence of projections in certain dense
subalgebra there are two of them which have maximal commutator equal to $1/2$ (the fact
that $1/2$ is the maximal value is proved in \cite{stampfli}). However in such
an arbitrary sequence there are also two projections which 
almost commute (see Theorem \ref{theorem-main}), 
so in this sense our algebra is quite random, 
that is no pattern repeats on any uncountable norm discrete subset of elements. 
In fact such behaviour is already sensitive to infinitary combinatorics beyond 
ZFC determined by $\diamondsuit$ and Open Coloring Axiom
 (OCA)\footnote{For the statement of OCA see Definition \ref{oca}.},
namely we have:

\begin{theorem}\label{theorem-oca} Assume OCA.
For every $0<\varepsilon<1/2$ among any sequence of operators  $(A_\xi: \xi<\omega_1)$ in $\B(\ell_2)$
 there is an uncountable $X\subseteq \omega_1$ such that 
\begin{itemize}
\item for every distinct $\xi_1, \xi_2\in X$ we have  $[A_{\xi_1}, A_{\xi_2}]>1/2-\varepsilon$, or
\item for every $\xi_1, \xi_2\in X$ we have  $[A_{\xi_1}, A_{\xi_2}]<\varepsilon$.
\end{itemize}
However, assuming $\diamondsuit$  there is a scattered 
C*-algebra $\A\subseteq \B(\ell_2)$ (it is in particular AF) such that for every $0<\varepsilon<1/2$ among any
discrete  sequence of projections $(P_\xi: \xi<\omega_1)$ 
in $\A$ 
\begin{itemize}
\item there are $\xi_1<\xi_2<\omega_1$ such that $[P_{\xi_1}, P_{\xi_2}]>1/2-\varepsilon$,
\item there are $\xi_1<\xi_2<\omega_1$ such that $[P_{\xi_1}, P_{\xi_2}]<\varepsilon$.
\end{itemize}
\end{theorem}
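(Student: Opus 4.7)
The plan is to apply OCA twice. After first discarding countably many $\xi$ so that $\|A_\xi\|\leq 1$ for all remaining $\xi$, view $\{A_\xi:\xi<\omega_1\}$ as a subset of the unit ball of $\B(\ell_2)$ endowed with the strong-$\ast$ operator topology. On the unit ball this topology is Polish, the adjoint is continuous and multiplication is jointly continuous, so $(A,B)\mapsto [A,B]$ is continuous; since the operator norm is lower semi-continuous on the unit ball, $(A,B)\mapsto\|[A,B]\|$ is lower semi-continuous, and hence $\{(A,B):\|[A,B]\|>t\}$ is open for every $t\geq 0$. This makes the relevant commutator thresholds available as open partitions for OCA on the separable metric space $\{A_\xi:\xi<\omega_1\}$.

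First apply OCA to the open partition with $K_0=\{\{\xi,\eta\}:\|[A_\xi,A_\eta]\|>1/2-\varepsilon\}$. Alternative (a) directly yields an uncountable $Y\subseteq\omega_1$ witnessing the first bullet. In alternative (b), pigeonhole on the countable $K_1$-cover gives an uncountable $Y\subseteq\omega_1$ with $\|[A_\xi,A_\eta]\|\leq 1/2-\varepsilon$ for all distinct $\xi,\eta\in Y$. Now apply OCA a second time on $Y$, with $K_0'=\{\|[A,B]\|>\varepsilon'\}$ open for some $\varepsilon'<\varepsilon$; alternative (b) of this second OCA, followed by pigeonhole, produces an uncountable $Z\subseteq Y$ with $\|[A_\xi,A_\eta]\|\leq\varepsilon'<\varepsilon$, witnessing the second bullet.

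The main obstacle is alternative (a) of the second OCA, which would give an uncountable $Z$ with $\|[A_\xi,A_\eta]\|$ trapped in the middle band $(\varepsilon',1/2-\varepsilon]$. I expect to exclude this case by iterating OCA with thresholds $\varepsilon'_n\uparrow\varepsilon$ together with a compactness argument inside the bounded middle band, using the Polish structure of the strong-$\ast$ topology to extract a genuinely uncountable limit (rather than a possibly countable intersection of nested uncountable sets); making this iteration actually produce an uncountable homogeneous subset is the most delicate step.

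\textbf{$\diamondsuit$ part.} Take $\A$ to be the algebra produced in Theorem \ref{theorem-main} via the $\diamondsuit$-scheme of Theorem \ref{diamond-scheme}. Its construction is transfinite of length $\omega_1$ and uses $\diamondsuit$ to anticipate and defeat every potential uncountable ``homogeneous'' discrete sequence of projections: for any discrete sequence $(P_\xi:\xi<\omega_1)$ of projections in the canonical dense subalgebra of $\A$, the construction guarantees both a pair $\xi_1<\xi_2$ with $\|[P_{\xi_1},P_{\xi_2}]\|=1/2$ (the maximal value by Stampfli's bound \cite{stampfli}) and a pair $\eta_1<\eta_2$ with $\|[P_{\eta_1},P_{\eta_2}]\|$ arbitrarily small. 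For any $\varepsilon\in(0,1/2)$ these immediately produce the two bullets, so all the work is already contained in Theorem \ref{theorem-main} and one need only invoke it.
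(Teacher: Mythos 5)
Your OCA argument follows the paper's route exactly: a colouring $K_0=\{\{A,B\}:\|[A,B]\|>t\}$ on the ball of $\B(\ell_2)$, which is separable metrizable in the strong operator topology, with openness coming from joint SOT-continuity of multiplication on bounded sets together with lower semicontinuity of the norm (this is Theorem \ref{maintheorem} and Corollary \ref{cor-oca}). But the obstacle you flag at the second application is not a delicate step to be finessed later; it is unfixable, because the ``middle band'' alternative genuinely occurs. Take two rank-one projections $Q_0,Q_1\in M_2$ with $\|[Q_0,Q_1]\|=1/4$ (projections onto two lines at angle $\pi/12$), and for $x\in\{0,1\}^{\N}$ put $P_x=\bigoplus_{n}Q_{x(n)}$ acting on $\ell_2=\bigoplus_n\C^2$. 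Then $[P_x,P_y]$ is block-diagonal with blocks $0$ or $\pm[Q_0,Q_1]$, so $\|[P_x,P_y]\|=1/4$ for all $x\neq y$; for $\varepsilon\leq 1/4$ no uncountable subfamily satisfies either bullet. No iteration of OCA can exclude this, and the compactness idea cannot work either: nested uncountable sets may have empty intersection, and the commutator norm is only lower (not upper) semicontinuous in SOT, so there is no ``limit'' configuration to extract. What is actually provable, and what Corollary \ref{cor-oca} asserts, is the dichotomy at a single threshold $t$: an uncountable $X$ with all pairs $>t$, or one with all pairs $\leq t$. Choosing any $t$ between $\varepsilon$ and $1/2-\varepsilon$, this already produces an uncountable $X$ on which the two events of the $\diamondsuit$ half cannot both occur, which is the only use the paper makes of the OCA half; your first OCA application proves exactly this, and you should stop there. (The two-threshold statement as literally written follows from the single dichotomy only when $\varepsilon>1/4$, so the paper's one-line derivation is itself overreaching for small $\varepsilon$.)

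The $\diamondsuit$ half is correctly reduced to Theorem \ref{theorem-main}, which is what the paper does, but one bridging step is missing from your sketch: clause (1) of that theorem concerns projections lying in the distinguished dense subalgebra $\B=\bigcup_{p\in\GG}\A^\GG_{X_p}$ which generate a nonseparable subalgebra, whereas the statement to be proved concerns an arbitrary norm-discrete sequence of projections of $\A$. One must invoke Lemma \ref{af-projections} to replace each $P_\xi$ by a projection $P_\xi'\in\B$ within $\varepsilon/4$, note that discreteness of $(P_\xi)_{\xi<\omega_1}$ forces the $P_\xi'$ to generate a nonseparable subalgebra, and then transfer the conclusions (1)(b) and (1)(c) back to the original sequence at the cost of $\varepsilon/2$ in the estimates. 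This is the same routine approximation used in the proofs of parts (2) and (3) of Theorem \ref{theorem-main}, but it does need to be carried out rather than restricting attention to projections that already lie in the dense subalgebra.
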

\begin{proof} Apply Corollary \ref{cor-oca} and Theorem \ref{theorem-main}
\end{proof}

Another natural question related to uncountable irredundant sets
in general C*-algebras is the following:

 \begin{question}\label{question-mckenzie}$ $
 \begin{enumerate}
\item Is it true that $d(\A)\leq 2^{irr(\A)}$ holds for every C*-algebra
(every C*-algebra of type I )? 
\item Can there be arbitrarily big C*-algebras with no
uncountable irredundant sets?
\end{enumerate}
\end{question}

This is motivated by a Boolean result of McKenzie (see 4.2.3 of \cite{koppelberg}) which
says that a Boolean algebra has a dense subalgebra not bigger than its irredundance.
This result has been generalized by Hida in \cite{clayton} to all commutative C*-algebras which
implies that $irr(\A)\leq 2^{d(\A)}$  holds for  commutative $\A$.  We prove
 this inequality answering Question \ref{question-mckenzie} for scattered C*-algebras
in  our
 Theorem \ref{cardinal-inequality-scat}.  

In Section 2 we review scattered C*-algebras and constructions schemes 
which is an elegant framework to deal with some constructions using $\diamondsuit$ recently
 introduced by Todorcevic in \cite{stevo-scheme}.
It was already applied in several functional analytic, topological and combinatorial contexts
in \cite{stevo-scheme}, \cite{fulgencio-stevo}, \cite{fulgencio}. 
In Section 3 we prove basic facts concerning irredundant sets in commutative and noncommutative setting.
 In Section 4 we prove the OCA part of Theorem \ref{theorem-oca}.
Section 5 is devoted to defining and investigating the partial order of finite dimensional
 approximations to our algebra from Theorem \ref{theorem-diamond}. In the final
 Section 6 we use the appropriate construction schemes described in Section 2 to
 construct the algebra from Theorem \ref{theorem-diamond}.

Notation and the terminology of this paper should be standard, however, it draws from
diverse parts of mathematics like Boolean algebras, operator theory, set-theory, logic and
general topology. When in doubt one could  refer to textbooks like \cite{koppelberg},
\cite{murphy}, \cite{jech}, \cite{kunen}, \cite{engelking}. In particular
by an embedding (isomorphism onto its image) we mean $*$-monomorphism
($*$-isomorphism) of $C^*$-algebras which is not necessarily unital,
$\ell_2(X)$ denotes the Hilbert space of square summable complex functions defined on a set $X$,
$\B(\ell_2(X))$ denotes the C*-algebra of all bounded operators on $\ell_2(X)$,
$\ell_2=\ell_2(\N)$,
$\langle \cdot, \cdot \rangle$ denotes the scalar product, $\A_+$
denotes the set of positive elements of a C*-algebra $\A$, $1_\A$ denotes the unit of $\A$ and
$\widetilde \A$ the unitization of $\A$, $B_{\A^*}$ denotes the dual ball of the algebra
$\A$, 
 $[A, B]=AB-BA$ for
$A, B\in \B(\ell_2)$, 
$M_n$ denotes the C*-algebra
of $n\times n$ matrices for $n\in \N$, $C(K)$ denotes the C*-algebra of complex valued
continuous functions on a  compact $K$ and $C_0(X)$ the C*-algebra of complex valued
continuous functions vanishing at infinity on a  locally compact $X$, $\chi_U$ denotes 
the characteristic function of a set $U$, $Clop(K)$ denotes the family of clopen 
subsets of a space $K$, $\omega_n$ denotes the $n$-th uncountable cardinal for $n\in \N$,  
$[X]^n$ denotes the family of all $n$-element subsets of a set $X$, $[X]^{<\omega}$
denotes the family of all finite subsets of a set $X$;
 $X<Y$ means that $x<y$ for all $x\in X$ and $y\in Y$ where
$X, Y$ are sets  of ordinals.

We would like to thank Alessandro Vignati for his feedback on an earlier version of this paper.

\section{Preliminaries}

\subsection{Scattered C*-algebras}

The reason why scattered C*-algebras will play an important role in our investigation
of irredundant sets is that in such algebras irredundant sets 
can easily be replaced by irredundant sets of projections (Proposition \ref{irr-projections}),
in particular
the Boolean results pass to
the C*-algebraic ones (Corollary \ref{cor-boolean}). Moreover all
commutative results culminate around the scattered case which seems most basic.

Recall that a topological space is called scattered if it does not contain any perfect subset or
in other words if each (closed) nonempty subset has a relative isolated point.
The phenomena related to the scatteredness were already analysed by Cantor which resulted in
the notion of the Cantor-Bendixson derivative of a topological space (\cite{engelking}).
The Boolean algebra manifestation of these phenomena was discovered by Mostowski and Tarski in
\cite{mostowski-tarski} as what is today known as superatomic Boolean algebras.
 The importance of the class of Banach spaces of the form $C(K)$, 
where $K$ is scattered, already implicitly known in the 30ties, was first
systematically revealed in \cite{pelczynski-semadeni}. Its generalization, Asplund Banach spaces, started
to play an important role in Banach space theory since the 60ties.
It was Jensen in \cite{jensen} who first defined a scattered $C^*$-algebra but they
 were considered earlier by Tomiyama \cite{tomiyama} and Wojtaszczyk \cite{wojtaszczyk}. 
A recent survey \cite{cantor-bendixson} underlines the links of scattered C*-algebras with
 its Boolean algebraic and commutative predecessors.
Recall that a projection $p$ in a  C*-algebra is called minimal if and only if
$p\A p=\C p$, i.e., minimal projections generalize isolated points. The $*$-subalgebra of $\mathcal A$ 
generated by the minimal projections of $\mathcal A$
will be denoted $\I^{At}(\mathcal A)$. We have the following 
observation from \cite{cantor-bendixson}:

\begin{proposition}\label{atoms} Suppose that $\mathcal A$ is a $C^*$-algebra.
\begin{enumerate}
\item $\I^{At}(\mathcal A)$ is an ideal of $\mathcal A$,
\item $\I^{At}(\mathcal A)$ is isomorphic to a subalgebra of the algebra $\mathcal K(\mathcal H)$ of all 
 compact operators on a Hilbert space $\mathcal H$,
\item $\I^{At}(\mathcal A)$ contains all ideals of $\mathcal A$ which
are  isomorphic to a subalgebra of
$\mathcal K(\mathcal H)$ for some Hilbert space $\mathcal H$, 
\item if an ideal $\mathcal I\subseteq \A$ is essential and
isomorphic to a subalgebra of $\mathcal K(\mathcal H)$ for some Hilbert space $\mathcal H$,
then $\mathcal I=\I^{At}(\mathcal A)$.
\end{enumerate}
\end{proposition}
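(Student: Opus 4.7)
My plan exploits the fundamental identity $p\A p=\C p$ for a minimal projection $p$, which produces a state $\phi_p$ on $\A$ with $pap=\phi_p(a)p$.

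For (1), it suffices to show that $ap\in\I^{At}(\A)$ for every minimal projection $p\in\A$ and every $a\in\A$. The key computation is
\[
(bpb^{*})^{2}=b(pb^{*}bp)b^{*}=\phi_p(b^{*}b)\,bpb^{*},\qquad b\in\A,
\]
which exhibits $bpb^{*}$ as a scalar multiple of an idempotent; the parallel identity $(bpb^{*})x(bpb^{*})=\phi_p(b^{*}xb)\,bpb^{*}$ shows the resulting projection is minimal in $\A$. Hence $bpb^{*}\in\I^{At}(\A)$ for every $b\in\A$ (when $\phi_p(b^{*}b)=0$, self-adjoint nilpotence forces $bpb^{*}=0$). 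Taking $b=1+a$ in $\widetilde\A$ and expanding $(1+a)p(1+a)^{*}=p+ap+pa^{*}+apa^{*}$ yields $ap+pa^{*}\in\I^{At}(\A)$; replacing $a$ by $ia$ gives $ap-pa^{*}\in\I^{At}(\A)$ as well, so $ap\in\I^{At}(\A)$.

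For (2), I would use the atomic representation $\pi=\bigoplus_\phi\pi_\phi$ over all pure states. Each irreducible $\pi_\phi$ satisfies $\pi_\phi(p)\pi_\phi(\A)\pi_\phi(p)=\C\pi_\phi(p)$, and Kadison transitivity forces $\pi_\phi(p)$ to be zero or rank one; hence $\pi$ maps $\I^{At}(\A)$ into $\mathcal K\bigl(\bigoplus_\phi\mathcal H_\phi\bigr)$, while faithfulness of $\pi$ on $\A$ is the standard Gelfand--Naimark consequence of pure states separating points. For (3) and (4), let $\mathcal I\subseteq\A$ be an ideal isomorphic to a C*-subalgebra of $\mathcal K(\mathcal H)$; by the structure theory of such subalgebras $\mathcal I$ is densely generated by its minimal projections, and each such projection is minimal in $\A$ (because $p\A p=p(p\A p)p\subseteq p\mathcal I p=\C p$), giving $\mathcal I\subseteq\overline{\I^{At}(\A)}$. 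For essentialness in (4), the closed ideal $\overline{\A p\A}$ generated by any minimal projection $p\in\A$ is, via the GNS representation at $\phi_p$, isomorphic to $\mathcal K(\mathcal H_p)$ and hence simple, so $\mathcal I\cap\overline{\A p\A}\neq 0$ forces $\overline{\A p\A}\subseteq\mathcal I$ and in particular $p\in\mathcal I$, giving $\I^{At}(\A)\subseteq\mathcal I$ and hence equality.

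The main obstacle is part (1): producing $ap$ as a genuine polynomial in minimal projections (rather than a norm-limit thereof) requires the polarization trick $b=1+a$ to turn the nonlinear expression $bpb^{*}$ into a source of linear combinations of elements like $ap$ and $pa^{*}$. A secondary delicate point in (3)--(4) is the identification $\overline{\A p\A}\cong\mathcal K(\mathcal H_p)$, which rests on essentially the same $\phi_p$-calculations as (1)--(2).
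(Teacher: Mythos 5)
The paper itself gives no proof of this proposition; it is imported verbatim from \cite{cantor-bendixson}, so there is no in-paper argument to compare against and your proposal has to stand on its own. Parts (1), (3) and (4) do: the identity $(bpb^*)^2=\phi_p(b^*b)\,bpb^*$, the minimality computation $(bpb^*)x(bpb^*)=\phi_p(b^*xb)\,bpb^*$, and the polarization $b=1+a$ in $\widetilde\A$ genuinely exhibit $ap$ inside the $*$-subalgebra generated by minimal projections (and then $a\cdot p_1\cdots p_n=(ap_1)(p_2\cdots p_n)$ plus $*$-closedness gives the two-sided ideal property); and the identification $\overline{\A p\A}\cong\mathcal K(\overline{\A p})$ via the GNS representation of $\phi_p$ is correct --- injectivity on that ideal follows since $\pi_{\phi_p}(x)=0$ forces $x\A p=0$, hence $x\overline{\A p\A}=0$, hence $xx^*=0$ --- which delivers simplicity and part (4) exactly as you say. (Throughout one should read $\I^{At}(\A)$ as the \emph{closed} $*$-subalgebra generated by the minimal projections, as in \cite{cantor-bendixson}; otherwise the closure bar you write in (3) is doing real work and (3) as stated would not quite hold.)

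Part (2), however, contains a step that fails as written. Knowing that every coordinate $\pi_\phi(p)$ has rank at most one does not make $\pi(p)=\bigoplus_\phi\pi_\phi(p)$ compact on $\bigoplus_\phi\mathcal H_\phi$: a direct sum of infinitely many nonzero rank-one projections is an infinite-rank projection. Already for $\A=\mathcal K(\ell_2)$, where $\I^{At}(\A)=\A$, every pure state is a vector state and every $\pi_\phi$ is equivalent to the identity representation, so the full atomic representation sends a rank-one projection to a projection of rank continuum; thus $\pi(\I^{At}(\A))\not\subseteq\mathcal K\bigl(\bigoplus_\phi\mathcal H_\phi\bigr)$. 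The repair is to use the \emph{reduced} atomic representation, i.e.\ one pure state per unitary equivalence class of irreducible representations (still faithful, since equivalent representations have equal kernels). Your own computation in (4) supplies the missing ingredient: $\overline{\A p\A}\cong\mathcal K(\mathcal H_p)$ has a unique irreducible representation up to equivalence, and two irreducible representations of $\A$ that are both nonzero on this ideal are equivalent iff their restrictions to it are; hence exactly one summand of the reduced atomic representation is nonzero on $p$, so $\pi(p)$ has rank one, words in minimal projections map to operators of rank at most one, and the closed linear span lands in the compacts. With that substitution the whole argument is sound.
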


A selected list of conditions equivalent to being scattered 
and which are relevant to our paper is given below. Any of these conditions can be 
taken as the definition of a scattered algebra.

\begin{theorem}[\cite{jensen, jensen2, wojtaszczyk, lin, kusuda, tomiyama, cantor-bendixson}]\label{theorem1}
Suppose that $\mathcal A$ is a $C^*$-algebra. The following
conditions are equivalent:
\begin{enumerate}

\item\label{ec-homomorphic} Every non-zero $*$-homomorphic   image of $\mathcal A$ has a minimal projection.
\item\label{ec-sequence-new} There is an ordinal $ht(\mathcal A)$
and a continuous  increasing sequence of closed ideals $(\I_\alpha^{At}(\A))_{\alpha\leq ht(\mathcal A)}$
called the Cantor-Bendixson composition series for $\A$
such that  $\I_0=\{0\}$, $\I_{ht(\mathcal A)}= \mathcal A$ and
$$\I^{At}(\mathcal A /\I_\alpha^{At}(\A))=\{[a]_{\I_\alpha^{At}(\A)}: a\in \I_{\alpha+1}^{At}(\A)\},$$
for every $\alpha < ht(\mathcal A)$.

\item\label{ec-subalgebra} Every non-zero subalgebra of $\mathcal A$ has a minimal projection.
\item Every non-zero subalgebra has a projection, 
\item Every subalgebra of $\A$ has real rank zero,
\item\label{ec-no-interval} $\A$ does not contain a copy of the $C^*$-algebra $C_0((0,1])=\{f\in C((0,1]):
\lim_{x\rightarrow 0}f(x)=0\}$.
\item\label{ec-spectrum} The spectrum of every self-adjoint element is countable.

\end{enumerate}
\end{theorem}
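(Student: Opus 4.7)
The plan is to establish the equivalences via the cycle $(1) \Rightarrow (2) \Rightarrow (3) \Rightarrow (4) \Rightarrow (6) \Rightarrow (7) \Rightarrow (1)$, inserting $(3) \Leftrightarrow (5)$ and adding $(6) \Leftrightarrow (7)$ as a side equivalence. This is mostly a compilation of results of Jensen, Wojtaszczyk, Lin, Kusuda and Tomiyama collected in \cite{cantor-bendixson}, so the expository goal is to keep each step brief and to isolate what the main obstruction is.

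For $(1) \Rightarrow (2)$ I would define the Cantor--Bendixson series by transfinite recursion: set $\I^{At}_0(\A)=\{0\}$, and at a successor stage let $\I^{At}_{\alpha+1}(\A)$ be the preimage under the quotient map $\A \to \A/\I^{At}_\alpha(\A)$ of the ideal $\I^{At}(\A/\I^{At}_\alpha(\A))$, which is nonzero whenever the quotient is nonzero by hypothesis (1); at limit stages take the closure of the union. A cardinality argument shows that the process stabilizes at some ordinal $\alpha^*$, and by (1) we must then have $\A/\I^{At}_{\alpha^*}(\A) = \{0\}$, giving $ht(\A) := \alpha^*$. For $(2) \Rightarrow (3)$, given a nonzero subalgebra $\B \subseteq \A$, let $\alpha$ be the least ordinal with $\B \cap \I^{At}_\alpha(\A) \neq \{0\}$. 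Continuity of the series forces $\alpha$ to be a successor $\beta+1$, so $\B \cap \I^{At}_{\beta+1}(\A)$ injects into $\I^{At}_{\beta+1}(\A)/\I^{At}_\beta(\A)$, which by Proposition \ref{atoms} is a subalgebra of $\mathcal K(\mathcal H)$. Any nonzero subalgebra of compact operators contains a rank-one projection, and rank-one projections remain minimal in any overalgebra, giving a minimal projection in $\B$. The implication $(3) \Rightarrow (4)$ is trivial, and $(3) \Leftrightarrow (5)$ reduces to: real rank zero implies the existence of projections in every nonzero subalgebra, while conversely minimal projections in every hereditary subalgebra yield norm approximation of self-adjoints by finite linear combinations of projections.

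For $(4) \Rightarrow (6)$, note that $C_0((0,1])$ is a commutative C*-algebra of continuous functions on a connected locally compact space that vanish at $0$, hence contains no nonzero projection; an embedding of $C_0((0,1])$ into $\A$ would produce a nonzero subalgebra of $\A$ with no projection. For $(6) \Leftrightarrow (7)$, I apply the classical Cantor--Bendixson theorem to the spectrum of a self-adjoint element: if $a=a^*\in\A$ has uncountable spectrum, then $\sigma(a) \subseteq \R$ contains a nonempty perfect subset $P$, and choosing $t_0 \in P$ together with a continuous surjection $P \to [0,1]$ sending $t_0$ to $0$ yields through the continuous functional calculus an embedding $C_0((0,1]) \hookrightarrow C^*(a) \subseteq \A$; the converse is immediate since elements of $C_0((0,1])$ have uncountable spectrum.

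The step $(7) \Rightarrow (1)$ is the one I expect to be the main obstacle, and it is where I would lean most heavily on the literature. The easy part is that condition (7) passes to quotients, because any self-adjoint element in $\A/\mathcal J$ can be lifted to a self-adjoint element of $\A$ whose spectrum contains the original, so every nonzero $*$-homomorphic image of $\A$ also satisfies (7). The delicate part is extracting a \emph{minimal} projection from (7) rather than just some projection: starting from a nonzero positive $a$, its countable compact spectrum has an isolated point $\lambda>0$, giving a projection $p_\lambda = \chi_{\{\lambda\}}(a)$, but one must iterate this argument inside hereditary subalgebras $p_\lambda \A p_\lambda$ and argue that a decreasing chain of projections obtained this way stabilises. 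The cleanest way, as in \cite{cantor-bendixson}, is to pass through a maximal abelian $*$-subalgebra and to observe that in a commutative C*-algebra condition (7) forces the Gelfand spectrum to be scattered, so that isolated points (which exist by scatteredness) correspond to minimal projections, which remain minimal in the ambient algebra by a standard maximality argument.
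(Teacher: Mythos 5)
The paper offers no proof of Theorem \ref{theorem1}: it is stated as a compilation of known results and only carries citations to Jensen, Wojtaszczyk, Lin, Kusuda, Tomiyama and \cite{cantor-bendixson}, so there is no in-paper argument to measure yours against. Your cycle $(1)\Rightarrow(2)\Rightarrow(3)\Rightarrow(4)\Rightarrow(6)\Rightarrow(7)\Rightarrow(1)$ is the standard organization, and you correctly isolate the two genuinely non-trivial points: that $(7)$ passes to quotients via self-adjoint lifts, and that $(7)\Rightarrow(1)$ is best done through a masa $M\subseteq\A$, where scatteredness of the Gelfand spectrum yields a projection $p$ with $pMp=\C p$, and the relative-commutant computation shows $p\A p=\C p$.

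Three places in the sketch need repair before it is a proof, all routine. First, in $(2)\Rightarrow(3)$ the claim that any nonzero subalgebra of $\mathcal K(\mathcal H)$ contains a rank-one projection is false ($\C p$ for $p$ of rank $2$ is a counterexample); what you need is that such a subalgebra $\CC$ contains a projection minimal \emph{in} $\CC$ (spectral projection of an isolated nonzero eigenvalue, then a minimal projection of the finite-dimensional corner), and that this projection is minimal in $\B$ because it lies in the ideal $\I_{\beta+1}^{At}(\A)$, so compression by it sends $\B$ back into $\B\cap\I_{\beta+1}^{At}(\A)$. Second, the assertion that continuity of the series forces the least $\alpha$ with $\B\cap\I_\alpha^{At}(\A)\neq\{0\}$ to be a successor is not immediate: an element of $\B$ in the closure of $\bigcup_{\beta<\alpha}\I_\beta^{At}(\A)$ need not lie in any $\I_\beta^{At}(\A)$, and one must invoke the standard fact that $\|b-c\|<\varepsilon$ with $c$ in a closed ideal $\J$ implies $(b-\varepsilon)_+\in\J$. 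Third, in the implication from uncountable spectrum to a copy of $C_0((0,1])$, a continuous surjection defined only on the perfect set $P\subseteq\sigma(a)$ embeds $C_0((0,1])$ into the \emph{quotient} $C(P)$ of $C(\sigma(a))$, not into $C^*(a)$; you must Tietze-extend it to $g:\sigma(a)\to[0,1]$ with $g(0)=0$ when $0\in\sigma(a)$ (rather than normalizing at an arbitrary $t_0\in P$), so that $f\mapsto(f\circ g)(a)$ is an injective $*$-homomorphism into $\A$. Finally, condition $(5)$ is handled most cleanly not via $(3)$ but as $(7)\Rightarrow(5)\Rightarrow(4)$: countable compact spectra are totally disconnected, so self-adjoint elements are approximable by elements of finite spectrum, and $(7)$ is inherited by subalgebras.
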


\begin{definition}\cite{cantor-bendixson} A scattered C*-algebra is called thin-tall
if and only if $ht(\A)$ from Theorem \ref{theorem1} (2) is equal $\omega_1$ and
$\I_{\alpha+1}^{At}(\A)/\I_\alpha^{At}(\A)$ is separable for each $\alpha<\omega_1$.
\end{definition}

 In the nonseparable context we are especially interested in condition (2) which was introduced
 in \cite{cantor-bendixson} which gives an essential composition series corresponding to
 the Cantor-Bendixson derivative. 
A scattered C*-algebra is called fully noncommutative if and only if for all $\alpha<ht(\A)$
the algebra
$\I^{At}(\A/\I_\alpha)$ is *-isomorphic to  the algebra of
 all compact operators  on a Hilbert space. We have the following two observations from
 \cite{cantor-bendixson}:
 
 \begin{proposition}\label{chain} Suppose that $\A$ is
a scattered $C^*$-algebra. 
The following are equivalent:
\begin{enumerate}
\item $\A$ is fully noncommutative,
\item the ideals of $\A$ form a chain,
\item the centers of the  multiplier algebras of
any quotient of $\A$ are all trivial.
\end{enumerate}
\end{proposition}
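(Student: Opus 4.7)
The plan is to prove $(1) \Leftrightarrow (2)$ and $(1) \Leftrightarrow (3)$ separately, relying on two facts I would extract first. First, for any scattered C*-algebra $\B$ the ideal $\I^{At}(\B)$ is essential in $\B$: if $L \trianglelefteq \B$ were a nonzero closed ideal with $L \cap \I^{At}(\B) = 0$, Theorem \ref{theorem1}(3) would provide a minimal projection of $\B$ inside the subalgebra $L$, contradicting $L \cap \I^{At}(\B) = 0$. Second, by Proposition \ref{atoms}(2) each $\I^{At}(\B)$ sits as a C*-subalgebra of $\mathcal{K}(\mathcal{H})$ for some Hilbert space $\mathcal{H}$; such subalgebras decompose as $c_0$-direct sums $\bigoplus_k \mathcal{K}(\mathcal{H}_k)$, so their ideal lattices form a chain if and only if there is a single summand, i.e., the algebra is isomorphic to $\mathcal{K}(\mathcal{H}')$ for some Hilbert space $\mathcal{H}'$.

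For $(1) \Rightarrow (2)$, given an ideal $J \trianglelefteq \A$, I set $\alpha_0 := \sup\{\beta \leq ht(\A) : \I_\beta^{At}(\A) \subseteq J\}$; continuity of the composition series at limits gives $\I_{\alpha_0}^{At}(\A) \subseteq J$. If $J \neq \I_{\alpha_0}^{At}(\A)$, then $J/\I_{\alpha_0}^{At}(\A)$ is a nonzero ideal of $\A/\I_{\alpha_0}^{At}(\A)$, and by essentiality it meets the socle $\I^{At}(\A/\I_{\alpha_0}^{At}(\A))$ nontrivially; by $(1)$, this socle equals $\mathcal{K}(\mathcal{H}_{\alpha_0})$, which is simple, so the intersection must be the whole socle, forcing $\I_{\alpha_0+1}^{At}(\A) \subseteq J$ and contradicting the maximality of $\alpha_0$. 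Hence $J = \I_{\alpha_0}^{At}(\A)$ and the ideal lattice is the chain $(\I_\alpha^{At}(\A))_{\alpha \leq ht(\A)}$. Conversely, under $(2)$, since closed ideals of a closed ideal of a C*-algebra are closed ideals of the ambient algebra (via an approximate unit argument), the ideal lattice of each $\I^{At}(\A/\I_\alpha^{At}(\A))$ inherits the chain property from $(2)$; by the structural fact above this socle must be $\mathcal{K}(\mathcal{H}_\alpha)$.

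For $(1) \Rightarrow (3)$, I use that $(1)$ already gives $(2)$, so every quotient of $\A$ is of the form $\A/\I_\alpha^{At}(\A)$; its essential simple socle $\mathcal{K}(\mathcal{H}_\alpha)$ realizes the quotient as an irreducibly acting subalgebra of $\B(\mathcal{H}_\alpha) = M(\mathcal{K}(\mathcal{H}_\alpha))$, and by essentiality the multiplier algebra $M(\A/\I_\alpha^{At}(\A))$ also embeds in $\B(\mathcal{H}_\alpha)$; any central element of $M(\A/\I_\alpha^{At}(\A))$ then commutes with $\mathcal{K}(\mathcal{H}_\alpha)$ and is therefore scalar. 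The hardest direction is $(3) \Rightarrow (1)$: arguing by contraposition, I would assume $\I^{At}(\A/\I_\alpha^{At}(\A)) \cong \bigoplus_{k \in K} \mathcal{K}(\mathcal{H}_k)$ with $|K| \geq 2$ and try to produce a nontrivial central element in the multiplier algebra of a suitable quotient. The natural candidates are the central projections of $M(\I^{At}(\A/\I_\alpha^{At}(\A))) = \prod_k \B(\mathcal{H}_k)$, but the main obstacle is verifying that one of them actually multiplies $\A/\I_\alpha^{At}(\A)$ into itself and not merely the socle. The cleanest way to dispose of this obstacle is the Dauns--Hofmann theorem $Z(M(\B)) \cong C_b(\mathrm{Prim}(\B))$: condition $(3)$ becomes the assertion that every closed subset of $\mathrm{Prim}(\A)$ is connected in the Jacobson topology, and for a scattered C*-algebra this forces $\mathrm{Prim}(\A)$ to be linearly ordered by specialization, which is equivalent to each socle $\I^{At}(\A/\I_\alpha^{At}(\A))$ having a unique irreducible representation and hence being $\mathcal{K}(\mathcal{H}_\alpha)$.
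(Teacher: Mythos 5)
The paper does not actually prove this proposition: it is stated as an observation imported from \cite{cantor-bendixson}, so there is no in-paper argument to compare yours against and it must be judged on its own. On that basis, your treatment of $(1)\Leftrightarrow(2)$ and of $(1)\Rightarrow(3)$ is correct and complete: the two preliminary facts (essentiality of $\I^{At}$, and the decomposition of C*-subalgebras of the compacts as $c_0$-sums of elementary algebras, which is the characterization from \cite{invitation} that the paper itself invokes elsewhere), the transfinite argument that every closed ideal equals some $\I^{At}_\alpha(\A)$ via essentiality and simplicity of the socle of the quotient, the ``ideal of an ideal is an ideal'' step for the converse, and the chain of embeddings $M(\B)\hookrightarrow M(I)\cong \B(\mathcal H)$ for an essential elementary ideal $I$ are all sound.

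The genuine gap is in $(3)\Rightarrow(1)$, and it cannot be closed the way you set it up, because the step you lean on is false: connectedness of every closed subset of $\mathrm{Prim}(\A)$ does \emph{not} force $\mathrm{Prim}(\A)$ to be linearly ordered by specialization, even for scattered algebras. Take $\A$ to be the unitization of $\mathcal K(\ell_2)\oplus\mathcal K(\ell_2)$. This is scattered, its socle is $\mathcal K(\ell_2)\oplus\mathcal K(\ell_2)$ (two summands, so $(1)$ fails), and the closed ideals $\mathcal K(\ell_2)\oplus 0$ and $0\oplus\mathcal K(\ell_2)$ are incomparable (so $(2)$ fails); yet its only quotients are $\A$ itself, the unitization of $\mathcal K(\ell_2)$, $\C$ and $0$, each unital with center $\C 1$, so the literal condition $(3)$ holds. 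Equivalently, $\mathrm{Prim}(\A)$ is a three-point $T_0$ space in which every closed subset is connected (so $C_b(\mathrm{Prim})=\C$ for every closed subset by exactly your Dauns--Hofmann translation) but which is not a chain under specialization. So under the literal reading of $(3)$ the implication $(3)\Rightarrow(1)$ is simply not a theorem, and no amount of work on your acknowledged ``obstacle'' (getting a central projection of $M(\I^{At}(\A/\I_\alpha))$ to multiply the quotient into itself) will produce one: in this example it genuinely does not. The statement is salvaged only by reading $(3)$ as referring to the multiplier algebras of the Cantor--Bendixson composition factors $\I^{At}(\A/\I^{At}_\alpha(\A))$ (equivalently, of all ideals of all quotients), and then the equivalence with $(1)$ is immediate from your own structural fact: $M\bigl(\bigoplus_{k\in K}\mathcal K(\mathcal H_k)\bigr)=\prod_{k\in K}\B(\mathcal H_k)$ has center $\ell_\infty(K)$, which is trivial exactly when $|K|=1$. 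You should either prove that corrected version directly or flag the formulation of $(3)$; as written, your sketch of the third direction rests on a false lemma.
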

 
 \begin{proposition}\label{atomic} Every  
 scattered $C^*$-algebra $\A$ is atomic, i.e., the ideal $\I^{At}(\A)$
 is essential.
\end{proposition}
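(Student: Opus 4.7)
The plan is to unpack the definition of an essential ideal and reduce the statement to a simple fact about minimal projections. Recall that a closed two-sided ideal $\mathcal{I}$ of a C*-algebra $\A$ is essential exactly when $\mathcal{I} \cap \mathcal{J} \neq \{0\}$ for every nonzero closed two-sided ideal $\mathcal{J}$ of $\A$. So I would fix an arbitrary nonzero closed ideal $\mathcal{J} \subseteq \A$ and aim to produce a minimal projection of $\A$ lying inside $\mathcal{J}$.

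First, I would use the characterization of scatteredness from Theorem \ref{theorem1}(\ref{ec-subalgebra}): every nonzero subalgebra of $\A$ has a minimal projection. Since $\mathcal{J}$ is a nonzero C*-subalgebra of $\A$, there exists a nonzero projection $p \in \mathcal{J}$ with $p\mathcal{J}p = \C p$. This projection is minimal relative to $\mathcal{J}$, but a priori not yet relative to $\A$.

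The key step, and the only point requiring any thought, is to upgrade $p$ to a minimal projection of the whole algebra $\A$. This is where the hypothesis that $\mathcal{J}$ is an ideal enters: for every $a \in \A$, the fact that $p \in \mathcal{J}$ gives $pap \in \mathcal{J}$, so $p\A p \subseteq p\mathcal{J}p = \C p$. Since the reverse inclusion is trivial, $p\A p = \C p$, meaning $p$ is a minimal projection of $\A$. Hence $p \in \I^{At}(\A) \cap \mathcal{J}$, so this intersection is nonzero.

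As $\mathcal{J}$ was an arbitrary nonzero closed ideal of $\A$, the ideal $\I^{At}(\A)$ meets every nonzero closed ideal nontrivially, so it is essential. Since there is no real obstacle beyond the ideal/subalgebra passage in the previous paragraph, the proof should be essentially a two-line argument once Theorem \ref{theorem1}(\ref{ec-subalgebra}) is invoked.
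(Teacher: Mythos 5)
Your proof is correct: the paper itself gives no argument for Proposition \ref{atomic} (it is simply quoted from \cite{cantor-bendixson}), and your route --- extract a projection $p$ of a nonzero closed ideal $\J$ that is minimal relative to $\J$ via Theorem \ref{theorem1}(\ref{ec-subalgebra}), then upgrade it to a minimal projection of $\A$ using $p\A p\subseteq p\J p=\C p$ because $\J$ is an ideal --- is exactly the standard argument. Nothing is missing.
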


 Recall that in a topological space a sequence of points
 $\{x_\xi: \xi<\kappa\}$ is called right-separated (left-separated) if and
 only if $x_\xi\not\in {\overline{\{x_\eta: \eta>\xi\}}}$ for all $\xi<\kappa$
 ($x_\xi\not\in {\overline{\{x_\eta: \eta<\xi\}}}$ for all $\xi<\kappa$).
 Left and right separated sequences play an important role in commutative set-theoretic
 topology because a regular space is hereditarily Lindel\"of (hereditarily separable)
 if it has no uncountable right-separated (left-separated) sequences.
 Additional axioms like $\diamondsuit$, CH, MA, PFA\footnote{For the statement
 of the Proper Forcing Axiom (PFA) or  Martin's Axiom (MA) we refer the reader, for example, to \cite{jech} or \cite{stevo-partition}.
 PFA implies among others MA, OCA and $2^\omega=\omega_2$.} have substantial impact on
 the existence of right or left separated sequences in regular topological spaces,
 for example PFA implies that there are no regular S-spaces, i.e., every regular
 topological space which has an uncountable right-separated sequence has
 an uncountable  left-separated sequence as well (Theorem 8.9 of \cite{stevo-partition}).

 \begin{proposition}\label{oca-discrete} Suppose that $\A$ is a thin-tall C*-algebra. Then the
 dual ball $B_{\A^*}$ of $\A^*$ contains an uncountable  right-separated sequence
 of pure states in the weak$^*$ topology. In particular under the Proper Forcing Axiom
 (PFA) the
 dual ball $B_{\A^*}$ of $\A^*$ contains an uncountable discrete set consisting of
 pure states.
 \end{proposition}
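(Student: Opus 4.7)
The plan is to exploit the Cantor-Bendixson composition series $(\I_\alpha^{At}(\A))_{\alpha\leq\omega_1}$ from Theorem \ref{theorem1}(2), which is available because $\A$ is thin-tall. For each $\alpha<\omega_1$, since $\alpha<ht(\A)=\omega_1$, the quotient $\A/\I_\alpha^{At}(\A)$ is a nonzero scattered algebra whose atomic ideal $\I^{At}(\A/\I_\alpha^{At}(\A))=\I_{\alpha+1}^{At}(\A)/\I_\alpha^{At}(\A)$ is nonzero (and essential, by Proposition \ref{atomic}), so I can pick a minimal projection $[q_\alpha]$ in it together with a self-adjoint lift $q_\alpha\in\I_{\alpha+1}^{At}(\A)$. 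The standard fact that a minimal projection $p$ in a C*-algebra $\B$ satisfying $p\B p=\C p$ determines a canonical pure state $\phi$ on $\B$ via $pbp=\phi(b)p$ (realizable as the vector state of an irreducible representation carrying $p$ as a rank-one projection) supplies a pure state $\sigma_\alpha$ on $\A/\I_\alpha^{At}(\A)$ with $\sigma_\alpha([q_\alpha])=1$. Its pull-back $\tau_\alpha:=\sigma_\alpha\circ\pi_\alpha$ along the quotient map $\pi_\alpha$ is a state on $\A$, and $\tau_\alpha$ is pure: any convex decomposition $\tau_\alpha=(\tau'+\tau'')/2$ forces both summands to annihilate $\I_\alpha^{At}(\A)$ by positivity, so descends to a convex decomposition of $\sigma_\alpha$, and purity of $\sigma_\alpha$ gives $\tau'=\tau''=\tau_\alpha$.

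The right-separation of $(\tau_\alpha)_{\alpha<\omega_1}$ is then immediate. For any $\beta>\alpha$ we have $\alpha+1\leq\beta$, hence $q_\alpha\in\I_{\alpha+1}^{At}(\A)\subseteq\I_\beta^{At}(\A)$, and since $\tau_\beta$ annihilates $\I_\beta^{At}(\A)$ while $\tau_\alpha(q_\alpha)=1$, the weak* open set $U_\alpha:=\{\varphi\in B_{\A^*}:\mathrm{Re}\,\varphi(q_\alpha)>1/2\}$ is a neighborhood of $\tau_\alpha$ disjoint from $\{\tau_\beta:\beta>\alpha\}$. This witnesses $\tau_\alpha\notin\overline{\{\tau_\beta:\beta>\alpha\}}^{w^*}$, so the sequence is right-separated in the weak* topology.

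For the PFA assertion I would invoke the consequence of Todorcevic's no-regular-S-space theorem (Theorem 8.9 of \cite{stevo-partition}), namely the PFA equality $s(X)=hL(X)$ for regular $X$: any regular topological space containing an uncountable right-separated sequence must already contain an uncountable discrete subspace. Applied to the subspace $\{\tau_\alpha:\alpha<\omega_1\}$ of the weak*-compact Hausdorff ball $B_{\A^*}$, this produces the desired uncountable discrete subcollection, whose members are pure states by construction.

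The only delicate point is the PFA step, which requires the full strength of the $s=hL$ equality under PFA rather than merely the absence of S-spaces (right-separated in a subspace does not automatically upgrade to discrete without such a principle); by contrast, the ZFC construction of the $\tau_\alpha$ is essentially bookkeeping along the Cantor-Bendixson composition series combined with the standard correspondence between minimal projections and pure states, together with the elementary observation that pull-backs of pure states through quotients remain pure.
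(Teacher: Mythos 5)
Your proof is correct and follows essentially the same route as the paper: both build pure states $\tau_\alpha$ along the Cantor--Bendixson composition series that vanish on $\I_\alpha^{At}(\A)$ and take value $1$ on an element of $\I_{\alpha+1}^{At}(\A)$, deduce right-separation from this, and then invoke Todorcevic's PFA no-S-space theorem to upgrade to a discrete set. The only (harmless) difference is that you realize $\sigma_\alpha$ as the canonical pure state of a minimal projection rather than as a vector state through an embedding of $\A/\I_\alpha^{At}(\A)$ into $\B(\ell_2)$, which if anything sidesteps the paper's implicit identification of $\I_{\alpha+1}^{At}(\A)/\I_\alpha^{At}(\A)$ with all of $\mathcal K(\ell_2)$.
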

 \begin{proof} 
 Let $(\I_\alpha)_{\alpha<\omega_1}$ be the Cantor-Bendixson composition series
 of \ref{theorem1} (3). As $\I_{\alpha+1}/\I_\alpha$ is an essential
 ideal of $\A/\I_{\alpha}$ which is *-isomorphic with the 
 the algebra of all compact operators on $\ell_2$, we can embed
 $\A/\I_{\alpha}$ into $\B(\ell_2)$ with the range containing
 all compact operators. Take $\tau_\alpha$  to be a vector pure state on
 $\B(\ell_2)$ composed with the quotient map and the embedding. So $\tau_\alpha$
 is a pure state on $\A$ which is zero on $\I_\alpha$ and there is
 $A_\alpha\in \I_{\alpha+1}$ such that $\tau_\alpha(A_\alpha)=1$.
 Denote the set of all pure states on $\A$ by $P(\A)$.
 Now consider 
 $$U_\alpha=\{\tau\in P(\A): \tau(A_\alpha)>0\}.$$
 Note that if $\tau_\beta\in U_\alpha$ then $\beta\leq \alpha$, so
 $\{\tau_\alpha: \alpha<\omega_1\}$ is right-separated in the weak$^*$ topology.
 So $\{\tau_\alpha: \alpha<\omega_1\}$ contains and uncountable left-separated sequence
 by PFA (Theorem 8.9 or \cite{stevo-partition}). It is clear that a sequence which is
 both left and right separated is discrete.
\end{proof}

\subsection{Construction schemes}

In this section we   recall some definitions  and results from \cite{stevo-scheme}.

\begin{definition}\label{def-end-ext} Let $E, F\in [\omega_1]^{<\omega}$.
\begin{enumerate}
\item $F< E$ whenever $\alpha<\beta$ for all $\alpha\in F$ and $\beta\in E$,
\item $F\sqsubseteq E$  whenever there is $\alpha\in\omega_1$ such that $E\cap\alpha=F$ 
(we say that $F$ is an initial fragment of $E$ or that $E$ end-extends $F$),
\item $F\sqsubset E$ whenever $F\sqsubseteq E$ and $E\setminus F\not=\emptyset$.
\end{enumerate}
\end{definition}

\begin{definition}\label{def-systems} Let 
$\eta$ be an ordinal and let $(F_\xi: \xi<\eta)=\mathcal F\subseteq[\omega_1]^{<\omega}$.
\begin{enumerate}
\item $\F$ is cofinal if for all $E\in[\omega_1]^{<\omega}$ there is $F\in\mathcal F$
such that $E\subseteq F$,
\item $(F_\xi: \xi<\eta)$ is a $\Delta$-system of length $\eta$ with root $\Delta$ whenever
$F_\xi\cap F_{\xi'}=\Delta$ for all  $\xi<\xi'<\eta$,
\item  A $\Delta$-system $(F_\xi: \xi<\eta)$  with root $\Delta$ is increasing whenever
$F_\xi\setminus \Delta< F_{\xi'}\setminus \Delta$ for all  $\xi<\xi'<\eta$,
\item A subset of a $\Delta$-system is called a subsystem,
\item $\F|F=\{E\in \F: E\subsetneq F\}$ for $F\subseteq \omega_1$.
\end{enumerate}
\end{definition}

\begin{definition} A pair of sequences $(n_k)_{k\in\N}\subseteq \N$ and $(r_k)_{k\in\N}\subseteq \N$
is called allowed parameters if and only if
\begin{enumerate}
\item $r_0=r_1=n_0=0$
\item $n_k\geq 2$ for all
$k\in\N$. 
\item each natural value appears in the sequence $(r_k)_{k\in\N}\subseteq \N$ infinitely
many times
\item $r_{k+1}<m_k$ where $m_0=1$, $m_{k+1}=r_{k+1}+n_{k+1}(m_k-r_{k+1})$ for $k>0$.
\end{enumerate}
\end{definition}

\begin{definition}\label{def-scheme} 
A construction scheme with a pair of allowed 
parameters $(n_k)_{k\in\N}\subseteq \N$ and $(r_k)_{k\in\N}\subseteq \N$  
is a cofinal family $\F=\bigcup_{n\in \N}\F_n$ satisfying
\begin{enumerate}
\item $\F_0=[\omega_1]^1$,
\item If $k>0$ and $E, F\in \F_k$, then $|E|=|F|$ and $E\cap F \sqsubseteq E, F$
  and 
  $$\{\phi_{F, E}[G]: G\in \F|E\}=\F|F,$$
where $\phi_{F, E}: E\rightarrow F$ is the order preserving bijection between $E$ and $F$,
\item If $k\geq 0$ and $F \in \F_{k+1}$, then the maximal elements of $\F |F$  are in $\F_k$ and they form an increasing
$\Delta$-system of  length $n_{k+1}$ such that $F$ is its union. The family of all these maximal elements is called the canonical decomposition
of $F$.
\end{enumerate}
\end{definition}

\begin{definition}\label{def-captures}
Given a construction scheme $\F$, we say that an $F\in\F_{k}$ for $k>0$
 captures a $\Delta$-system $(s_i : i < n)$ of finite subsets of $\omega_1$ with root $s$ if the
canonical decomposition $(F_i: i<n_k)$ of $F$ with root $\Delta$ has the following properties:
\begin{enumerate}
\item  $n_k\geq n$, $s \subseteq  \Delta$, and 
$s_i \setminus  s \subseteq F_i \setminus \Delta$ for all $i < n$.
\item  $\phi_{F_i, F_j}[s_i] = s_j$ for all $i < j < n$.
\end{enumerate}
When $n = n_k$, we say that $F$ fully captures the $\Delta$-system.
\end{definition}

\begin{theorem}[\cite{stevo-scheme}] \label{diamond-scheme}Assume $\diamondsuit$. For any pair of 
allowed parameters $(n_k)_{k\in\N}$  and $(r_k)_{k\in\N}$ there is
a construction scheme $\F$  with these parameters and there is a partition $(P_n)_{n\in\N}$ 
of $\N$ into infinitely many infinite sets such that for every $n\in \N$ and every uncountable
$\Delta$-system $T$ of finite subsets of $\omega_1$  there exist arbitrarily large  $k\in P_n$
and $F\in\F_{k+1}$  
which fully captures a subsystem of $T$.
\end{theorem}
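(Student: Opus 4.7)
The plan is to build $\F$ by transfinite recursion along $\omega_1$, using $\diamondsuit$ to anticipate uncountable $\Delta$-systems that will need to be captured. Clause (2) of Definition~\ref{def-scheme} forces every $F\in\F_k$ to be order-isomorphic to a single combinatorial template $T_k$ determined by the parameters, namely $T_{k+1}$ is an increasing $\Delta$-system of $n_{k+1}$ copies of $T_k$ glued along a common root of size $r_{k+1}$, with $T_0$ a singleton. I fix these templates at the outset, together with a partition $(P_n)_{n\in\N}$ of $\N$ into infinite sets and a $\diamondsuit$-sequence $(S_\alpha)_{\alpha<\omega_1}$ coding pairs $(T,n)$, where $T\subseteq[\omega_1]^{<\omega}$ is a candidate uncountable $\Delta$-system and $n\in\N$ is the target partition class.

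The recursion maintains a growing family $\F^\alpha\subseteq[\alpha]^{<\omega}$ satisfying the requirements of Definition~\ref{def-scheme} restricted to $\alpha$. At each successor stage $\alpha+1$ I interleave two tasks. For cofinality, at stage $\alpha$ of a fixed enumeration of $[\omega_1]^{<\omega}$ I pick a set $E$ from the enumeration not yet covered by $\F^\alpha$ and adjoin to $\F_k$, for $k$ large enough that $T_k$ accommodates $E$, an isomorphic copy of $T_k$ containing $E$, built from previously introduced sub-templates where forced by $E$ and from fresh ordinals otherwise. For capturing, if $S_\alpha$ codes $(T,n)$ and the portion of $T$ already below $\alpha+1$ contains a $\Delta$-subsystem long enough to align with $T_{k+1}$ for some sufficiently large and yet unused $k\in P_n$, I choose such a subsystem, fix an order isomorphism of its union with $T_{k+1}$, and adjoin that union to $\F_{k+1}$; by construction, the resulting $F\in\F_{k+1}$ fully captures the chosen subsystem. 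A standard $\diamondsuit$-reflection argument then shows that for every genuinely uncountable $\Delta$-system $T$ and each $n\in\N$, the stages at which $S_\alpha$ correctly codes $(T,n)$ form a stationary set, so the capture step fires cofinally in $\omega_1$ and produces arbitrarily large $k\in P_n$ witnessing the conclusion.

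The main obstacle I foresee is clause (2) of Definition~\ref{def-scheme} itself, which demands that for all $E,F\in\F_k$ the order isomorphism $\phi_{F,E}:E\to F$ send $\F|E$ bijectively onto $\F|F$. Thus adjoining a new element at level $k$ entails simultaneously declaring its entire sub-family and keeping it coherent with the corresponding sub-families of every other element of $\F_k$ added before or after. The template discipline described above handles this in principle: I legislate that $\F|F$ equal exactly the set of subsets of $F$ corresponding, under the template isomorphism with $T_k$, to the iterated canonical sub-elements of $T_k$. The delicate bookkeeping is to verify inductively that at each lower level the prescribed sub-elements can always be taken from previously introduced members of $\F$ whenever the current construction already forces this, and otherwise built on fresh ordinals, without ever violating the induction hypothesis for an earlier element. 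Reconciling this template discipline simultaneously with the cofinality task and with the $\diamondsuit$-capture task is the principal technical work of the proof, and is where the freedom built into the allowed-parameters conditions (in particular the fact that each natural number appears infinitely often in $(r_k)$) is used.
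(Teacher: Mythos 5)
A preliminary remark: the paper does not prove Theorem \ref{diamond-scheme} at all --- it is imported verbatim from Todorcevic's \cite{stevo-scheme}, as the bracketed citation in the theorem header indicates --- so there is no in-paper argument to compare yours against. Judged on its own terms, your sketch correctly identifies the shape of the known proof: the rigidity of clause (2) of Definition \ref{def-scheme} forces a single combinatorial template at each level, the scheme is built by recursion on $\omega_1$, and a $\diamondsuit$-sequence coding pairs $(T,n)$ fires stationarily often to trigger capture steps with ever larger $k\in P_n$. That is the right plan, and your identification of clause (2) as the central obstacle is accurate.

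As a proof, however, it has a genuine gap, and you name it yourself. Everything turns on showing that the capture step can actually be executed inside the rigid template. When $S_\alpha$ hands you a $\Delta$-subsystem $(s_i:i<n_{k+1})$ with root $s$, the new $F\in\F_{k+1}$ is not ``the union of the subsystem'' (as your text literally says): it must have cardinality $m_{k+1}$; its canonical pieces $F_i$ must be genuine members of $\F_k$, hence themselves fully templated all the way down, containing $s_i\setminus s$ above a common root $\Delta\supseteq s$; and --- the hard part --- $F$ and every set introduced along the way must intersect each previously committed member of the same level in a common initial segment with matching trace families, as clause (2) demands, while the positions of the $s_i$ inside the $F_i$ must agree under the order isomorphisms. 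Asserting that ``the template discipline handles this in principle'' and that the ``delicate bookkeeping'' remains to be done is precisely to defer the entire content of the theorem: the scaffolding you describe (templates, an enumeration for cofinality, $\diamondsuit$-guessing, stationarity) is standard, and the theorem is exactly the claim that adversarially prescribed finite sets can be threaded through that scaffolding without breaking coherence. Until that inductive verification is written out --- this is where the requirement that every value recur infinitely often in $(r_k)$ and the freedom to pick fresh ordinals are actually consumed --- you have a plan rather than a proof; for the executed version one must consult \cite{stevo-scheme}.
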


\section{Irredundant sets}

\subsection{Reducing irredundant sets to special ones}

Because of Weierstrass-Stone theorem for unital commutative C*-algebras
sometimes it is useful to consider a strengthening of being irredundant, where
the subalgebras we generate are unital. However this does not affect the
cardinalities of irredundant sets much:

\begin{lemma}\label{irr-unital}
Suppose that $\A$ is a unital C*-algebra and that $\mathcal X\subseteq \A$
is its nonempty irredundant set. Then there is $x_0\in \mathcal X$ such that no element
$x$ of $\mathcal X\setminus\{x_0\}$ belongs to the unital C*-subalgebra
generated by $\mathcal X\setminus\{x_0, x\}$. 
\end{lemma}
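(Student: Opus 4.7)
The plan is to show that an element $x_0$ witnessing the failure of the stronger (unital) irredundance property already does the job; if no such witness exists the set is already unital-irredundant and any $x_0$ works.

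First I would fix notation: denote by $C^*(Y)$ the (not necessarily unital) C*-subalgebra of $\A$ generated by $Y\subseteq \A$ and by $C^*_1(Y)$ the unital C*-subalgebra generated by $Y$. Since adding a unit to a C*-algebra amounts to adding a copy of $\C$, I would use the standard description
\[
C^*_1(Y)=C^*(Y)+\C\cdot 1_\A.
\]

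If every $x\in \mathcal X$ already fails to lie in $C^*_1(\mathcal X\setminus\{x\})$, any $x_0\in \mathcal X$ witnesses the conclusion. Otherwise, I would fix $x_0\in \mathcal X$ with $x_0\in C^*_1(\mathcal X\setminus\{x_0\})$ and write
\[
x_0=a+\lambda\, 1_\A,\qquad a\in C^*(\mathcal X\setminus\{x_0\}),\ \lambda\in\C.
\]
Since $\mathcal X$ is irredundant, $x_0\notin C^*(\mathcal X\setminus\{x_0\})$, which forces $\lambda\neq 0$. I claim this $x_0$ works.

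Indeed, for a given $x\in \mathcal X\setminus\{x_0\}$, suppose for contradiction that $x\in C^*_1((\mathcal X\setminus\{x_0\})\setminus\{x\})$, so that $x=b+\mu\, 1_\A$ with $b\in C^*(\mathcal X\setminus\{x_0,x\})$ and $\mu\in\C$. If $\mu=0$ then $x=b\in C^*(\mathcal X\setminus\{x\})$, contradicting irredundance of $\mathcal X$. If $\mu\neq 0$ then $1_\A=\mu^{-1}(x-b)\in C^*(\mathcal X\setminus\{x_0\})$, and substituting into the formula for $x_0$ gives $x_0=a+\lambda\mu^{-1}(x-b)\in C^*(\mathcal X\setminus\{x_0\})$, again contradicting irredundance. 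The main point is just to keep careful track of when $1_\A$ lands in which generated subalgebra; I do not foresee any nontrivial obstacle beyond this bookkeeping.
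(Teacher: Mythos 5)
Your proof is correct and follows essentially the same route as the paper: both arguments fix an $x_0$ that lies in the unital subalgebra generated by the rest, write $x_0=a+\lambda 1_\A$ with $\lambda\neq 0$ forced by irredundance, and then derive from a hypothetical bad $x$ that $1_\A$, and hence $x_0$, lies in the non-unital subalgebra generated by $\mathcal X\setminus\{x_0\}$, contradicting irredundance. Your explicit handling of the case $\mu=0$ is a small bookkeeping refinement the paper leaves implicit, but the argument is the same.
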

\begin{proof} If  no element
$x$ of $\mathcal X$ belongs to the unital C*-subalgebra
generated by $\mathcal X\setminus\{ x\}$ we are done by taking any element
of $\mathcal X$ as $x_0$. 

Otherwise let $x_0\in \mathcal X$ belong to  the unital C*-subalgebra
generated by $\mathcal X\setminus\{x_0\}$ so $x_0=\lambda1+y$ where
$y$ is in the subalgebra generated by $\mathcal X\setminus\{x_0\}$ and 
$\lambda\in \C\setminus\{0\}$.
Suppose that
there is $x \in \mathcal X\setminus\{x_0\}$ in the unital C*-subalgebra
generated by $\mathcal X\setminus\{x_0, x\}$, i.e., $x=\lambda'1+z$
where  $z$ is in the subalgebra generated by $\mathcal X\setminus\{x_0, x\}$ and 
$\lambda'\in \C\setminus\{0\}$. So $1$ is in the algebra generated by $\mathcal X\setminus\{x_0\}$,
but this shows that $x_0=\lambda1+y$ is in the subalgebra generated by $\mathcal X\setminus\{x_0\}$,
a contradiction with the fact that $\mathcal X$ is irredundant.
\end{proof}

Clearly any two orthogonal one-dimensional projections in $M_2$ form an irredundant set, however
each of them is in the unital C*-algebra generated by the other projection.

\begin{proposition}\label{irr-positive} Suppose that $\A$ is a  C*-algebra, $\kappa$
is an infinite cardinal  and $\{A_\xi: \xi<\kappa\}$ is an irredundant set in $\A$.
Then there is an irredundant set $\{B_\xi: \xi<\kappa\}$ consisting of positive elements of $\A$.
\end{proposition}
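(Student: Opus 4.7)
The plan is to extract, for each $A_\xi$, a single positive element $B_\xi$ that lies in $C^*(A_\xi)$ but is not generated by the remaining $A_\eta$'s; then automatically the $B_\xi$'s form an irredundant positive family.

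First, I would decompose each $A_\xi$ into its self-adjoint and anti-self-adjoint parts, $R_\xi = (A_\xi + A_\xi^*)/2$ and $I_\xi = (A_\xi - A_\xi^*)/(2i)$, both of which lie in $C^*(A_\xi)$. Using the continuous functional calculus applied to the functions $t \mapsto t^+$ and $t \mapsto t^-$ (which vanish at $0$, so the calculus makes sense even in the non-unital setting), I obtain the four positive elements $R_\xi^+, R_\xi^-, I_\xi^+, I_\xi^- \in C^*(A_\xi) \subseteq \A$. These satisfy
\[
A_\xi = R_\xi^+ - R_\xi^- + i(I_\xi^+ - I_\xi^-).
\]

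Next, fix $\xi < \kappa$ and set $\A_\xi := C^*(\{A_\eta : \eta \neq \xi\})$. Since $\{A_\eta : \eta < \kappa\}$ is irredundant, $A_\xi \notin \A_\xi$. If all four of $R_\xi^{\pm}, I_\xi^{\pm}$ belonged to $\A_\xi$, then by the displayed identity so would $A_\xi$, a contradiction. Hence I can choose $B_\xi \in \{R_\xi^+, R_\xi^-, I_\xi^+, I_\xi^-\}$ with $B_\xi \notin \A_\xi$; this $B_\xi$ is positive.

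Finally, because $B_\eta \in C^*(A_\eta)$ for every $\eta$, we have the inclusion $C^*(\{B_\eta : \eta \neq \xi\}) \subseteq \A_\xi$, and therefore $B_\xi \notin C^*(\{B_\eta : \eta \neq \xi\})$ for every $\xi$. Thus $\{B_\xi : \xi < \kappa\}$ is an irredundant family of positive elements, and the irredundance property forces the $B_\xi$ to be pairwise distinct, so the set has cardinality $\kappa$. There is no real obstacle here beyond verifying that the functional calculus yielding $R_\xi^\pm, I_\xi^\pm$ lands inside the (possibly non-unital) subalgebra $C^*(A_\xi)$, which is exactly why the functions $t \mapsto t^\pm$ were chosen to vanish at the origin.
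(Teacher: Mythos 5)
Your proof is correct and follows essentially the same approach as the paper: decompose each $A_\xi$ into real/imaginary and then positive/negative parts, observe that at least one of the resulting positive pieces must lie outside $C^*(\{A_\eta:\eta\neq\xi\})$ while all pieces of the other elements lie inside it. The only cosmetic difference is that you handle all four candidates $R_\xi^{\pm}, I_\xi^{\pm}$ in a single step, whereas the paper first passes to self-adjoint elements and then to positive ones in two stages.
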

\begin{proof}
Given $X\subseteq \kappa$ 
let $\A_X$ be the C*-subalgebra of $\A$ generated by $\{A_\xi: \xi\in X\}$.
Clearly $(A_\eta+A_\eta^*)/2, (A_\eta-A_\eta^*)/2i\in \A_{\kappa\setminus\{\xi\}}$ for every 
$\eta\not=\xi$ and $(A_\xi+A_\xi^*)/2$ and $(A_\xi-A_\xi^*)/2i$
cannot both belong to $\A_{\kappa\setminus\{\xi\}}$. So $\{B_\xi: \xi<\kappa\}$ is irredundant set consisting of
self-adjoint elements, where
$B_\xi\in \{(A_\xi+A_\xi^*)/2, (A_\xi-A_\xi^*)/2i\}$ is such that 
$B_\xi\not\in \A_{\kappa\setminus\{\xi\}}$.

To prove that we can obtain the same cardinality
 irredundant set consisting of all positive elements,
by the above we may assume that the original $A_\xi$s are self-adjoint.
We have $A_\xi={A_\xi}_+-{A_\xi}_-$.   Note that 
there is $B_\xi\in \{{A_\xi}_+, {A_\xi}_-\}$ which does not 
belong to $\A_{\kappa\setminus \{\xi\}}$.
But ${A_\eta}_+= (|A_\eta|+A_\eta)/2$  and ${A_\eta}_-=(A_\eta-|A_\eta|)/2$ for $|A_\eta|=\sqrt{A_\eta^2}$
belong to $\A_\xi$ for all $\eta\in \kappa\setminus\{\xi\}$. So
$\{B_\xi: \xi<\kappa\}$ is irredundant, as required.
\end{proof}

The following proposition shows the role of being scattered while 
extracting irredundant sets consisting of projections.

\begin{proposition}\label{irr-projections} 
Suppose that $\A$ is a  scattered C*-algebra, $\kappa$
is an infinite cardinal  and $\{A_\xi: \xi<\kappa\}$ is an irredundant set in $\A$.
Then there is an irredundant set $\{P_\xi: \xi<\kappa\}$ consisting of projections.
\end{proposition}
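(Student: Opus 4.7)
The plan is to reduce to the positive case via Proposition \ref{irr-positive} and then use the fact that in a scattered C*-algebra each positive element has countable spectrum, hence its C*-subalgebra is generated by its projections.

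More concretely, by Proposition \ref{irr-positive} I may assume from the outset that every $A_\xi$ is positive. For each $\xi<\kappa$ set
$$\A_\xi:=C^*(\{A_\eta:\eta\neq\xi\})\subseteq \A,$$
so that irredundance says $A_\xi\notin\A_\xi$. Since $\A$ is scattered, condition (7) of Theorem \ref{theorem1} gives that the spectrum $\sigma(A_\xi)$ is a countable compact subset of $\R$. By the continuous functional calculus $C^*(A_\xi)$ is $*$-isomorphic to $C_0(\sigma(A_\xi)\setminus\{0\})$ (or $C(\sigma(A_\xi))$ in the unital case). A countable compact Hausdorff space is scattered, hence totally disconnected, so its algebra of continuous functions is the norm closure of the linear span of characteristic functions of clopen sets. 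Transporting this back through the functional calculus, the set of projections of $C^*(A_\xi)$ generates $C^*(A_\xi)$ as a C*-algebra.

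In particular, if every projection of $C^*(A_\xi)$ belonged to $\A_\xi$, then $A_\xi\in C^*(A_\xi)\subseteq\A_\xi$, contradicting the irredundance of $\{A_\eta:\eta<\kappa\}$. Therefore I can choose a projection
$$P_\xi\in C^*(A_\xi)\setminus\A_\xi.$$
For any $\eta\neq\xi$ we have $P_\eta\in C^*(A_\eta)\subseteq\A_\xi$, and so $C^*(\{P_\eta:\eta\neq\xi\})\subseteq\A_\xi$, which together with $P_\xi\notin\A_\xi$ shows that $P_\xi$ does not lie in the C*-subalgebra generated by the other $P_\eta$'s. Thus $\{P_\xi:\xi<\kappa\}$ is an irredundant set of projections of cardinality $\kappa$.

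The main (mild) subtlety is step (3): one must be sure that the projections of $C^*(A_\xi)$ recover $A_\xi$ in norm. This is where scatteredness is used in an essential way, through the countability of $\sigma(A_\xi)$ and the resulting total disconnectedness of the Gelfand spectrum of $C^*(A_\xi)$. Without countability of the spectrum (i.e.\ outside the scattered setting) the argument fails, since a general positive element need not lie in the norm-closed span of its spectral projections.
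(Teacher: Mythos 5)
Your proof is correct and takes essentially the same route as the paper: reduce to positive (self-adjoint) elements via Proposition \ref{irr-positive}, observe that the commutative scattered algebra $C^*(A_\xi)$ is generated by its projections (the paper phrases this via $C^*(A_\xi)\cong C_0(K_{\{\xi\}})$ with $K_{\{\xi\}}$ locally compact scattered hence totally disconnected, while you go through the countable spectrum and functional calculus, which amounts to the same thing), and then pick a projection $P_\xi\in C^*(A_\xi)$ outside the closed subalgebra generated by the remaining $A_\eta$'s. The final containment $C^*(\{P_\eta:\eta\neq\xi\})\subseteq \A_\xi$ is exactly how the paper concludes irredundance as well.
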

\begin{proof}
 By Lemma \ref{irr-positive} we may assume that $A_\xi$s are self-adjoint.
 Let us adopt the notation $\A_X$ for $X\subseteq \kappa$ from the proof of Lemma \ref{irr-positive}.

Since subalgebras of scattered algebras are scattered, $\A_{\{\xi\}}$s are scattered for each
$\xi<\kappa$ and so of the form $C_0(K_{\{\xi\}})$ for some locally compact 
scattered $K_{\{\xi\}}$ which must be totally disconnected. It follows that
linear combinations of projections of $\A_{\{\xi\}}$s are norm
dense in $\A_{\{\xi\}}$s. Hence for each $\xi<\kappa$
there is a projection $P_\xi\in \A_{\{\xi\}}$ such that $P_\xi\not\in \A_{\kappa\setminus\{\xi\}}$.
It follows that $\{P_\xi: \xi<\kappa\}$ is irredundant.

\end{proof}

\subsection{Irredundant sets in commutative C*-algebras}

The following two lemmas characterize irredundant sets in commutative C*-algebras.

\begin{lemma}\label{char-comm-unital-irr} Suppose that $K$ is  compact Hausdorff space and
$\mathcal X\subseteq C(K)$ is such that no $f\in \mathcal X$ belongs to
the unital C*-subalgebra of $C(K)$ generated by $\mathcal X\setminus \{f\}$.
Then for each $f\in \mathcal X$ there are $x_f, y_f\in K$ such that $f(x_f)\not=f(y_f)$
but $g(x_f)=g(y_f)$ for any $g\in \mathcal X\setminus \{f\}$.

Consequently if $\mathcal X$ is a nonempty irredundant set in $C(K)$, then
there is $h\in \mathcal X$ such that $\mathcal X\setminus \{h\}$ has the above property.
\end{lemma}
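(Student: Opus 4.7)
The plan is to apply the complex Stone--Weierstrass theorem to the unital C*-subalgebra generated by $\mathcal X\setminus\{f\}$. Fix $f\in \mathcal X$ and define an equivalence relation $\sim_f$ on $K$ by
$$x\sim_f y\iff g(x)=g(y)\text{ for all }g\in \mathcal X\setminus\{f\}.$$
Let $\mathcal B$ denote the unital C*-subalgebra of $C(K)$ generated by $\mathcal X\setminus\{f\}$. I would first observe that every element of $\mathcal B$ is a uniform limit of polynomials (in the generators and their complex conjugates, together with $1$), so every element of $\mathcal B$ is constant on each $\sim_f$-class. Thus $\mathcal B\subseteq \mathcal B'$, where $\mathcal B'$ is the closed unital $*$-subalgebra of functions in $C(K)$ that are constant on $\sim_f$-classes.

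Next I would show the reverse inclusion $\mathcal B'\subseteq \mathcal B$. Any $h\in \mathcal B'$ factors through the quotient $K/\sim_f$, which is compact Hausdorff (since $\sim_f$ is the equivalence relation induced by a family of continuous functions into a Hausdorff space). On $K/\sim_f$ the elements of $\mathcal X\setminus\{f\}$ descend to continuous functions that, by the very definition of $\sim_f$, separate points. Together with $1$ and closed under complex conjugation, they generate a closed unital self-adjoint subalgebra that separates points, so by the complex Stone--Weierstrass theorem this subalgebra equals $C(K/\sim_f)$. Pulling back, $h\in \mathcal B$, establishing $\mathcal B=\mathcal B'$.

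Now the hypothesis $f\notin \mathcal B$ says exactly that $f$ is not constant on some $\sim_f$-class, which yields points $x_f,y_f\in K$ with $x_f\sim_f y_f$ and $f(x_f)\ne f(y_f)$; by definition of $\sim_f$, $g(x_f)=g(y_f)$ for all $g\in \mathcal X\setminus\{f\}$, which is the desired conclusion. For the ``consequently'' part, given an irredundant set $\mathcal X$ in $C(K)$ I would apply Lemma \ref{irr-unital} (to the unital algebra $C(K)$) to obtain $h\in \mathcal X$ such that no $x\in \mathcal X\setminus\{h\}$ lies in the unital C*-subalgebra generated by $(\mathcal X\setminus\{h\})\setminus\{x\}$; then $\mathcal X\setminus\{h\}$ satisfies the hypothesis of the first part.

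The only delicate point is the quotient argument: I need to check that $K/\sim_f$ is Hausdorff and that the natural quotient map identifies continuous functions on $K/\sim_f$ with continuous functions on $K$ that are constant on $\sim_f$-classes. Both facts are standard given that $\sim_f$ is the common level-set equivalence of a family of continuous functions into $\C$, but it is the step where care is required; everything else is a direct application of Stone--Weierstrass.
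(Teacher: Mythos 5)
Your proposal is correct and rests on the same key ingredient as the paper's proof, namely the complex Stone--Weierstrass theorem, with the "consequently" part handled identically via Lemma \ref{irr-unital}. The only difference is packaging: the paper Gelfand-reduces to the case where $\mathcal X$ generates $C(K)$ and then notes that the proper subalgebra generated by $\mathcal X\setminus\{f\}$ fails to separate some pair of points, whereas you identify that subalgebra explicitly as the functions constant on the classes of $\sim_f$ via the quotient $K/\sim_f$ --- two descriptions of the same Stone--Weierstrass argument.
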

\begin{proof} By the Gelfand representation we may assume that $C(K)$ is 
the unital C*-algebra generated by $\mathcal X$.
By the complex Stone-Weierstrass theorem the proper C*-subalgebra generated 
by $\mathcal X\setminus\{f\}$ does not separate a pair of points of $K$, say $x_f, y_f$.
But they must be separated by $f$ by the fact that $\mathcal X$ generated $C(K)$.

The last part of the lemma follows from Lemma \ref{irr-unital}
\end{proof}

\begin{lemma}\label{char-comm-irr} Suppose that $X$ is locally compact noncompact Hausdorff space and
$\mathcal X\subseteq C_0(X)$ is irredundant then for every $f\in \mathcal X$
there are $x_f, y_f\in X$ such that either
\begin{itemize}
\item $f(x_f)\not=0$ and $g(x_f)=0$ for all $g\in \mathcal X\setminus \{f\}$, or
\item  $f(x_f)\not=f(y_f)$
but $g(x_f)=g(y_f)$ for all $g\in \mathcal X\setminus \{f\}$.
\end{itemize}
Points $x_f$ satisfying the first case form a discrete subspace of $X$
\end{lemma}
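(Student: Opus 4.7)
The plan is to reduce to the unital case already handled in Lemma \ref{char-comm-unital-irr} by passing to the one-point compactification $X^* = X \cup \{\infty\}$, so that $C(X^*) = \widetilde{C_0(X)}$ and $\mathcal X \subseteq C_0(X) \subseteq C(X^*)$. The core step will be to verify that $\mathcal X$ satisfies the hypothesis of the previous lemma in the larger unital algebra $C(X^*)$.

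To do this, I would fix $f \in \mathcal X$ and let $\mathcal B_f$ denote the C*-subalgebra of $C_0(X)$ generated by $\mathcal X \setminus \{f\}$. Since the elements of $\mathcal B_f$ all vanish at $\infty$ and constants do not, the unital C*-subalgebra of $C(X^*)$ generated by $\mathcal X \setminus \{f\}$ decomposes as the closed direct sum $\mathcal B_f \oplus \C \cdot 1$. Were $f$ to lie in this subalgebra, writing $f = h + \lambda$ with $h \in \mathcal B_f$ and evaluating at $\infty$ would force $\lambda = 0$, hence $f = h \in \mathcal B_f$, contradicting the irredundance of $\mathcal X$ in $C_0(X)$.

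Applying Lemma \ref{char-comm-unital-irr} then yields, for each $f$, points $x_f, y_f \in X^*$ with $f(x_f) \neq f(y_f)$ and $g(x_f) = g(y_f)$ for every $g \in \mathcal X \setminus \{f\}$. A short case split finishes the dichotomy: if both points lie in $X$, we are in the second alternative directly; otherwise one of them, say $y_f$, is $\infty$, and then $g(x_f) = g(\infty) = 0$ for every $g \neq f$ while $f(x_f) \neq f(\infty) = 0$, placing $x_f \in X$ in the first alternative.

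For the discreteness claim, let $D \subseteq X$ be the set of $x_f$ arising in the first case. Any two such points are distinct, since $x_f = x_g$ for distinct $f, g$ would force $g(x_f) = 0$ from $f$'s defining property, contradicting $g(x_g) \neq 0$ from $g$'s. Moreover the open set $U_f = \{x \in X : f(x) \neq 0\}$ is a neighborhood of $x_f$ that misses every other $x_g \in D$ (since $f \in \mathcal X \setminus \{g\}$ forces $f(x_g) = 0$), so $D$ is discrete. The only slightly delicate point throughout is the clean direct-sum decomposition of the unital subalgebra inside $C(X^*)$; beyond that the argument is a routine transfer of the compact case lemma through the compactification.
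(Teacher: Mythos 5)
Your proposal is correct and follows essentially the same route as the paper: pass to the one-point compactification, check that $\mathcal X$ satisfies the hypothesis of Lemma \ref{char-comm-unital-irr} in $C(X\cup\{\infty\})$, and split cases according to whether one of the separated points is $\infty$, with the sets $U_f=\{x: f(x)\neq 0\}$ witnessing discreteness. The only cosmetic difference is in justifying the hypothesis of the unital lemma: you evaluate at $\infty$ to kill the constant term, while the paper notes that a nonzero constant term would put the unit inside $C_0(X)$, contradicting noncompactness; both are valid one-line arguments for the same step.
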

\begin{proof} Let $K=X\cup\{\infty\}$ be the one-point compactification of $X$.
We will identify $C_0(K)$ with a C*-subalgebra of $C(K)$. Note that 
$\mathcal X$ satisfies the hypothesis of Lemma \ref{char-comm-unital-irr}, because
if $f=\lambda1+g$ for $f, g\in C_0(X)$, the unit would be in $C_0(X)$ which  contradicts
the hypothesis that $X$ is noncompact. So we obtain the pairs of points
$x_f, y_f\in K$ as in Lemma \ref{char-comm-unital-irr}. The first case of the lemma
corresponds to the situation when one of the points $x_f, y_f$ is $\infty$, say
$y_f$. But then $h(y_f)=0$ for all $h\in C_0(X)$ which implies
$f(x_f)\not=0$ and $g(x_f)=0$ for all $g\in \mathcal X\setminus \{f\}$.

Considering open sets $U_f=\{x\in X_0: f(x)\not=0\}$ we obtain neighbourhoods
witnessing the discretness of the set of $x_f$s satisfying the first case.
\end{proof}

In fact discrete subsets of $K$ provide strong irredundant subsets in $C(K)$:

\begin{remark}\label{remark-discrete} Suppose that $K$ is compact Hausdorff space and $D\subseteq K$
is discrete. For each $d\in D$ consider $f_d\in C(K)$ such that $f_d(d)\not=0$
and $f_d(d')=0$ for all $d'\in D\setminus\{d\}$. Then $f_d$ does not belong to
the ideal generated by $\{f_{d'}:d'\in D\setminus\{d\}\}$. In particular 
$\{f_{d}:d\in D\}$ is irredundant.
\end{remark}

One should, however, note that there could be dramatic gap between
the sizes of discrete subsets and the sizes of irredundant sets:

\begin{remark}\label{remark-split} Let $K$ be the split interval, i.e., $\{0, 1\}^\N\times \{0,1\}$
with the order topology induced by the lexicographical order. Then
$K$ has no uncountable discrete subset (in fact, $K$ is hereditarily separable and
 hereditarily Lindel\"of) but $C(K)$ has an irredundant set 
$\{\chi_{[{0^\N}^\frown0, x^\frown0]}: x\in \{0,1\}^\N\}$ of cardinality continuum.
\end{remark}

Most of the literature concerning implicitly or explicitly irredundant sets
are related to Boolean algebras. As shown in the following lemma the relationship
between Boolean irredundance and irredundance for C*-algebras is very close in the light of lemma
\ref{irr-unital}.

\begin{definition}\label{def-boolean-irr} A subset $\mathcal X$ of a Boolean algebra $\A$ is called Boolean irredundant
if for every $x\in \mathcal X$ the element $x$ does not belong to the Boolean subalgebra
generated by $\mathcal X\setminus\{x\}$.
\end{definition}

\begin{lemma}\label{lifting-boolean}
Suppose that $\A$ is a unital C*-algebra and $\B\subseteq \A$ is a Boolean algebra of projections
in $\A$ and
$\mathcal X\subseteq\B$ is Boolean irredundant. Then $\mathcal X$  is irredundant in $\A$.

Suppose that $K$ is a totally disconnected space and $\mathcal X\subseteq C(K)$ consists of projections
and no element of $x\in \mathcal X$ belongs to the unital C*-algebra generated by
$\mathcal X\setminus\{x\}$. Then $\mathcal X$ is Boolean irredundant in the Boolean algebra
$\{\chi_U: U\in Clop(K)\}$.
\end{lemma}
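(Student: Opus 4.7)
My plan is to base both parts on the elementary identity that for commuting projections $p, q$ the Boolean operations are expressible by the $C^*$-operations: $p\wedge q = pq$, $p\vee q = p+q-pq$, and $p' = 1_{\A}-p$.

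For the second statement I would argue by contraposition. If some $\chi_U \in \mathcal{X}$ were a finite Boolean combination of elements of $\mathcal{X}\setminus\{\chi_U\}$, then, using the identities above, $\chi_U$ would be a $*$-polynomial in those elements and the unit of $C(K)$, so it would lie in the unital $C^*$-subalgebra of $C(K)$ generated by $\mathcal{X}\setminus\{\chi_U\}$, contradicting the hypothesis.

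For the first statement I would again argue by contraposition, and it suffices to show that $p$ does not lie in the unital $C^*$-subalgebra $\CC_1\subseteq\A$ generated by $\mathcal{X}\setminus\{p\}$, since this subalgebra contains the non-unital one generated by the same set. Because $\B$ consists of commuting projections, $\CC_1$ is commutative and equals the norm closure of the directed family of finite-dimensional subalgebras $\CC_F:=C^*(F\cup\{1_{\A}\})$, where $F$ ranges over the finite subsets of $\mathcal{X}\setminus\{p\}$. Each $\CC_F$ is spanned by the atoms of the finite Boolean algebra generated by $F$ in $\B$, so its projections are exactly sums of atoms, and these are precisely the elements of the Boolean subalgebra of $\B$ generated by $F$. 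Hence, if every projection in $\CC_1$ already belongs to some $\CC_F$, then $p$ lies in the Boolean subalgebra of $\B$ generated by a finite subset of $\mathcal{X}\setminus\{p\}$, contradicting Boolean irredundance.

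The step of pushing $p$ from $\CC_1$ down to some finite-dimensional $\CC_F$ is the main obstacle. My plan is to approximate $p$ in norm by self-adjoint elements $Q_F\in\CC_F$ and then apply continuous functional calculus with a function $f\colon\R\to\R$ equal to $0$ near $0$ and $1$ near $1$ to produce a projection $f(Q_F)\in\CC_F$ at arbitrarily small norm distance from $p$. The argument then closes by the fact that in any commutative $C^*$-algebra two distinct projections sit at norm distance exactly $1$, since they correspond to distinct clopen subsets of the Gelfand spectrum and their difference is a nonzero characteristic function, so any projection within norm distance less than $1$ of $p$ must equal $p$. An alternative route is to bypass the approximation and identify $\CC_1$ with $C(Y)$, where $Y$ is the Stone space of the Boolean algebra generated by $\mathcal{X}\setminus\{p\}$, reading off the projections of $\CC_1$ directly as the clopens of $Y$; both approaches rest on essentially the same commutative phenomenon.
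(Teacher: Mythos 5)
Your argument is correct, but for the first claim it takes a genuinely different route from the paper's. The paper reduces to the commutative algebra $\CC=C^*(\B)\cong C(K_\B)$, where $K_\B$ is the Stone space of $\B$, and argues dually: since $x$ is not in the Boolean subalgebra generated by $\mathcal X\setminus\{x\}$, there are two distinct ultrafilters on $\B$ which agree on that subalgebra but differ at $x$; evaluation at these two points of $K_\B$ gives two characters that agree on the $C^*$-subalgebra generated by $\mathcal X\setminus\{x\}$ yet separate $x$, so $x$ cannot lie in that subalgebra (the separation criterion of Lemma \ref{char-comm-unital-irr} and Proposition \ref{epi-irr}). You instead work on the primal side and show directly that the $C^*$-algebra generated by a family of commuting projections acquires no projections beyond the Boolean algebra they generate: you exhaust it by finite-dimensional pieces, approximate $p$ by a projection in such a piece via functional calculus (essentially Lemma \ref{af-projections} of the paper), and use the fact that distinct projections in a commutative $C^*$-algebra lie at norm distance $1$. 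Both arguments are sound; yours is more hands-on and makes explicit the rigidity phenomenon that the duality argument conceals, while the paper's is shorter because Stone--Weierstrass absorbs the approximation step. Your proof of the second claim (Boolean operations on commuting projections are $*$-polynomial, so a Boolean combination lies in the \emph{unital} $C^*$-subalgebra) is the intended one; the paper leaves it implicit.

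One small caveat in your first argument: the hypothesis does not force the unit of $\B$ to be $1_\A$, so the projections of $\CC_F=C^*(F\cup\{1_\A\})$ are the sums of atoms of the Boolean algebra generated by $F$ relative to $1_\A$, which may properly exceed the Boolean subalgebra of $\B$ generated by $F$ (for instance it contains $1_\A$, which need not belong to $\B$). This does not break the proof: since $p\in\B$ we have $p\leq 1_\B$, so the atom sitting below $1_\A-1_\B$ cannot occur in the decomposition of $p$, and the remaining atoms do lie in the Boolean subalgebra of $\B$ generated by $F$. But your claim that the projections of $\CC_F$ are ``precisely'' the elements of that Boolean subalgebra should be weakened to this one-sided statement.
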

\begin{proof}
Let $\CC$ be the C*-subalgebra of $\A$ generated by $\B$. It is abelian, so it is of the form
$C(K)$ where K is the Stone space of $\B$. It is enough to prove that $\mathcal X$
 is irredundant in $\CC$.
But given a proper Boolean subalgebra, there are distinct ultrafilters on the superalgebra which
coincide on the subalgebra. These ultrafilters are the points of $K$ witnessing the irredundance of
$\F$ as in Lemma \ref{epi-irr}.
\end{proof}

For commutative scattered C*-algebras the relationship between Boolean and C*-algebraic
irredundance is even closer:

\begin{corollary}\label{cor-boolean} Suppose that 
$\A$ is an infinite superatomic Boolean algebra. Then the Boolean irredundance of $\A$
is the same as $irr(C(K_\A))$, where $K_\A$ is the Stone space of $\A$.
\end{corollary}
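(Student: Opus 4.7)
The plan is to verify both inequalities between the Boolean irredundance of $\A$ and $irr(C(K_\A))$, each at the level of the suprema defining them. The essential input is Stone duality, which identifies $\A$ with the Boolean algebra $Clop(K_\A)$ and hence with the projection lattice of $C(K_\A)$, together with the hypothesis that $\A$ is superatomic, which translates to $K_\A$ being (infinite) scattered and $C(K_\A)$ being a scattered commutative C*-algebra.

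For the easy direction, Boolean irredundance of $\A$ $\leq irr(C(K_\A))$, I would take any Boolean irredundant subset of $\A$, realize it as a family of projections (characteristic functions of clopens) inside $C(K_\A)$, and invoke the first part of Lemma \ref{lifting-boolean} to conclude that the same family is irredundant in $C(K_\A)$ with the same cardinality.

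For the reverse direction, I would start with an irredundant subset $\mathcal X \subseteq C(K_\A)$ of infinite cardinality $\kappa$. The step that makes crucial use of the hypothesis is the invocation of Proposition \ref{irr-projections}, which, thanks to the scatteredness of $C(K_\A)$, replaces $\mathcal X$ by an irredundant family of projections $\{P_\xi : \xi < \kappa\}$. To align with the precise hypothesis of the second half of Lemma \ref{lifting-boolean}, which is phrased in terms of the unital C*-subalgebra generated by the other elements, I would then apply Lemma \ref{irr-unital} to discard a single distinguished element; since $\kappa$ is infinite, this preserves the cardinality. The second part of Lemma \ref{lifting-boolean} then delivers Boolean irredundance in $Clop(K_\A) \cong \A$.

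No genuine obstacle is expected here, as the corollary is designed to package Proposition \ref{irr-projections}, Lemma \ref{irr-unital}, and Lemma \ref{lifting-boolean} into one clean statement. The only point to watch is that the claimed equality is of suprema, so one need not exhibit a single extremal witness on either side; the infinite cardinal arithmetic $\kappa = \kappa - 1$ (and the assumption that $\A$ is infinite, which guarantees that the relevant suprema are at least $\aleph_0$) absorbs the one-element loss coming from Lemma \ref{irr-unital}.
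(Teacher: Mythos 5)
Your proposal is correct and follows essentially the same route as the paper's proof: both directions are handled by the two parts of Lemma \ref{lifting-boolean}, with Proposition \ref{irr-projections} (via scatteredness of $C(K_\A)$) and Lemma \ref{irr-unital} used to reduce an arbitrary irredundant set to a family of projections satisfying the unital hypothesis, at the cost of one element which is absorbed by the infinite cardinality.
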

\begin{proof}
As $\A$ is infinite, its Boolean irredundance is infinite (just take an infinite pairwise disjoint collection). By the first part of Lemma \ref{lifting-boolean} the Boolean irredundance of $\A$
is not bigger than $irr(C(K_\A))$. On the other hand consider any infinite irredundant subset
$\mathcal X$ of $C(K_\A)$. $K_\A$ is scattered as $\A$ is superatomic and so $C(K_\A)$ is a scattered C*-algebra.
By Proposition \ref{irr-projections} there is an irredundant
subset $\mathcal Y$ of $C(K_\A)$ of the same cardinality as $\mathcal X$ and consisting
of projections in $C(K_\A)$. By removing at most one element of $\mathcal Y$, by the second part
of Lemma \ref{lifting-boolean} and Lemma \ref{irr-unital} it is a Boolean irredundant
set in the Boolean algebra $\{\chi_U: U\in Clop(K_\A)\}$ and this yields 
a Boolean irredundant set in $\A$.
\end{proof}

The above positively answers Question 3.10 (3) of \cite{dzamonja} in the case of a scattered space.

\begin{corollary}\label{cor-real-valued}
 Suppose that $K$ is an infinite Hausdorff compact space. Then
$irr^\#(C_\R(K))=irr(C(K))$ where $irr^\#(C_\R(K))$ is the supremum over the cardinalities
of sets $\mathcal X$ of real-valued continuous functions on $K$ such that no $f\in \mathcal X$
belongs to the real unital Banach algebra generated by $\mathcal X\setminus\{f\}$.
In particular the $\pi$-weight of $K$ is bounded by ${irr(C(K))}$ and the density of $C(K)$
is bounded by $2^{irr(C(K))}$.
\end{corollary}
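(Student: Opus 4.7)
The plan is to prove the equality $irr^\#(C_\R(K))=irr(C(K))$ by establishing both inequalities, and then to derive the ``in particular'' bounds from it.

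For $irr^\#(C_\R(K))\leq irr(C(K))$ I would take any $\#$-irredundant $\mathcal X\subseteq C_\R(K)$ and argue that it is also irredundant in $C(K)$. If some $f\in\mathcal X$ belonged to the (possibly non-unital) complex C*-subalgebra $\CC$ generated by $\mathcal X\setminus\{f\}$, then, since every element of $\mathcal X\setminus\{f\}$ is real-valued, any complex polynomial without constant term in those elements has real part equal to a real polynomial in the same elements. Taking real parts is norm-continuous, so $\mathrm{Re}\,\CC$ is contained in the real Banach algebra (without unit) generated by $\mathcal X\setminus\{f\}$, hence a fortiori in the real unital Banach algebra generated. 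But $f=\mathrm{Re}\,f\in\mathrm{Re}\,\CC$, contradicting $\#$-irredundance.

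For $irr(C(K))\leq irr^\#(C_\R(K))$, given an irredundant $\mathcal X\subseteq C(K)$ of infinite cardinality $\kappa$, I would first invoke Lemma~\ref{irr-unital}, applicable since compactness of $K$ makes $C(K)$ unital, to discard one element so that the remainder is unitally irredundant. I would then re-run the two-step reduction in the proof of Proposition~\ref{irr-positive} inside unital C*-subalgebras: the identities $A=\tfrac{A+A^*}{2}+i\tfrac{A-A^*}{2i}$ and $B=B_+-B_-$, together with the fact that $A^*$ lies in the unital C*-subalgebra generated by $A$ and $|B|=\sqrt{B^2}$ lies in the unital C*-subalgebra generated by a self-adjoint $B$, allow a self-adjoint choice and then a positive choice at each stage, producing a family $\{B_\xi:\xi<\kappa\}$ of positive real-valued functions that is unitally irredundant in $C(K)$. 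Because each $B_\eta$ is self-adjoint, the unital complex Banach algebra generated by $\{B_\eta:\eta\neq\xi\}$ is already a $*$-algebra and hence coincides with the unital complex C*-subalgebra they generate. Therefore any $B_\xi$ belonging to the real unital Banach algebra generated by $\{B_\eta:\eta\neq\xi\}$ would lie in this unital complex C*-subalgebra, contradicting the established unital irredundance. Hence $\{B_\xi\}$ is $\#$-irredundant of cardinality $\kappa$.

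For the ``in particular'' assertions I would combine the established equality with known cardinal bounds. The density bound $d(C(K))\leq 2^{irr(C(K))}$ is Hida's generalization of McKenzie's theorem to commutative C*-algebras in \cite{clayton}. For the $\pi$-weight bound $\pi w(K)\leq irr(C(K))$ I would apply the point-pair characterization of Lemma~\ref{char-comm-unital-irr} in the real setting (using the real Stone--Weierstrass theorem), which associates to each $f$ in a $\#$-irredundant real family a pair $(x_f,y_f)$ separated only by $f$; thresholding these at $\alpha_f=\tfrac{1}{2}(f(x_f)+f(y_f))$ gives open sets $U_f$ from which, via McKenzie's inequality $d(\mathcal B)\leq irr(\mathcal B)$ applied to the Boolean algebra of regular open sets of $K$ (whose Boolean density equals $\pi w(K)$), one extracts a $\pi$-base of cardinality at most $irr^\#(C_\R(K))=irr(C(K))$.

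I expect the main obstacle to be the reverse inequality of the equality, where one must simultaneously juggle three distinctions---unital versus non-unital generation, real versus complex coefficients, and Banach-algebra versus C*-algebra closure. The reduction to positive self-adjoint generators is precisely what makes all three switches compatible; without it, a direct comparison between the real unital Banach algebra and the complex C*-subalgebra appears to fail.
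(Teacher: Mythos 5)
Your proof of the equality $irr^\#(C_\R(K))=irr(C(K))$ is correct and close to the paper's. For $irr(C(K))\leq irr^\#(C_\R(K))$ the paper performs the same two reductions (Proposition \ref{irr-positive} and the argument of Lemma \ref{irr-unital}), just in the opposite order; for the converse inequality the paper passes through the real Stone--Weierstrass theorem and the point-pair criterion of Lemma \ref{char-comm-unital-irr}, whereas you take real parts of polynomials in the real generators directly --- both are valid, and yours is if anything more self-contained.

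The gap is in your derivation of $\pi(K)\leq irr(C(K))$. You propose to apply McKenzie's Boolean inequality $d(\B)\leq irr(\B)$ to the algebra $\mathrm{RO}(K)$ of regular open sets. For this to yield anything you would need the Boolean irredundance of $\mathrm{RO}(K)$ to be bounded by $irr^\#(C_\R(K))$, and no such bound is established or apparent: a Boolean irredundant family in $\mathrm{RO}(K)$ consists of regular open sets whose characteristic functions are in general not continuous, so it does not transfer to an irredundant family in $C(K)$ (the transfer in Lemma \ref{lifting-boolean} needs clopen sets, i.e., projections). Moreover, the open sets $U_f$ you obtain by thresholding a single $\#$-irredundant family cannot play the role you assign them: a McKenzie-type argument does not extract a dense subalgebra (or a $\pi$-base) from a given irredundant set; it runs contrapositively, using the failure of every family of size $\kappa$ to be a $\pi$-base in order to construct recursively a $\#$-irredundant (``bidiscrete'') family of size $\kappa^+$. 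The paper sidesteps all of this by quoting Theorem 10 of \cite{clayton}, which proves $\pi(K)\leq irr^\#(C_\R(K))$ directly, and then obtains the density bound from $d(C(K))=w(K)\leq 2^{\pi(K)}$ (Theorem 3.3 of \cite{hodel}) rather than as a separate citation. If you do not intend to reprove Hida's theorem, the correct move is to cite it for the $\pi$-weight inequality and derive the density bound from it as the paper does.
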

\begin{proof} Let $X\subseteq C(K)$ be an infinite irredundant set. By Lemma \ref{irr-positive}
we may assume that it consists of real-valued (non-negative) functions. As in the proof of
Lemma \ref{irr-unital}, by removing at most one element we may assume that no $f\in \mathcal X$
belongs to the real unital C*-algebra generated by $\mathcal X\setminus\{f\}$. So
$irr^\#(C_\R(K))\geq irr(C(K))$.

Now given a set $\mathcal X$ as in the lemma, by the real unital Weierstrass-Stone theorem there are pairs of
points $x_f, y_f\in K$ such that $f(x_f)\not=f(y_f)$
but $g(x_f)=g(y_f)$ for any $g\in \mathcal X\setminus \{f\}$ (cf. \cite{invariants}).
hence $\mathcal X$ is an irredundant set in the C*-algebra $C(K)$ by Lemma 
\ref{char-comm-unital-irr}.

The last part of the corollary follows from Theorem 10 of \cite{clayton} where
$\pi(K)\leq irr^\#(C_\R(K))$ is proved and from the fact
that the weight of a regular space is bounded by the exponent of its $\pi$-weight 
(Theorem 3.3. of \cite{hodel}).
\end{proof}


\subsection{Irredundant sets in general C*-algebras}

Having developed 
the motivations in the previous section now we move to the irredundant sets in general, possibly noncommutative C*-algebras.

\begin{proposition}\label{orthogonal} Every infinite pairwise orthogonal collection of self-adjoint
elements in a C*-algebra is irredundant.
In particular,  every infinite dimensional C*-algebra contains an infinite irredundant set.
\end{proposition}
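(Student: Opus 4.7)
The plan is to fix $i\in I$ and show that the entire C*-subalgebra $\B_i$ generated by $\{A_j : j\in I\setminus\{i\}\}$ annihilates $A_i$ on both sides. Once this is established, $A_i\in \B_i$ would force $A_i^2=0$, and hence $A_i=0$ by the C*-identity $\|A_i\|^2=\|A_i^2\|$ applied to the self-adjoint element $A_i$; this contradicts the implicit nondegeneracy of the collection (a zero element trivially lies in any generated C*-subalgebra, so the statement is to be read for collections of nonzero elements).

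The key step is to verify that the two-sided annihilator
$$\mathcal{Z}=\{B\in \A : A_iB=BA_i=0\}$$
is a norm-closed $*$-subalgebra of $\A$. Linearity and norm-closure are immediate; if $B,C\in \mathcal{Z}$ then $(BC)A_i=B(CA_i)=0$ and $A_i(BC)=(A_iB)C=0$; and $B\in \mathcal{Z}$ implies $B^*\in \mathcal{Z}$ because taking adjoints of $BA_i=0$ and $A_iB=0$, together with $A_i^*=A_i$, yields $A_iB^*=0$ and $B^*A_i=0$. Pairwise orthogonality of the self-adjoint family gives $A_iA_j=A_jA_i=0$ for every $j\neq i$, so each generator of $\B_i$ lies in $\mathcal{Z}$, whence $\B_i\subseteq \mathcal{Z}$, completing the first assertion.

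For the \emph{in particular} clause, I invoke Ogasawara's theorem (cited in the introduction) to obtain an infinite-dimensional abelian C*-subalgebra $\CC\cong C_0(X)$ inside $\A$, for some infinite locally compact Hausdorff space $X$. A short induction based on the Hausdorff property produces countably many pairwise disjoint nonempty open sets $U_n\subseteq X$ (at each step pick a point distinct from a chosen accumulation point and separate them by disjoint opens inside the previous neighbourhood of the accumulation point; if there is no accumulation point then $X$ is discrete and infinitely many singletons work). Urysohn's lemma in the locally compact Hausdorff setting then yields nonzero $f_n\in C_0(X)_+$ with $\mathrm{supp}(f_n)\subseteq U_n$. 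These $f_n$ are pairwise orthogonal self-adjoint elements of $\A$, so the first part of the proposition provides the required infinite irredundant set.

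There is no substantial obstacle. The only delicate point is where self-adjointness is used: it is needed both to close $\mathcal{Z}$ under the involution and to pass from $A_i^2=0$ to $A_i=0$, so dropping self-adjointness would force one to work with $A_i^*A_i$ and the annihilator of $\{A_i,A_i^*\}$ instead.
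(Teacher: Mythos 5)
Your proof is correct and follows essentially the same route as the paper: the key fact in both is that the two-sided annihilator $\{B\in\A : A_iB=BA_i=0\}$ of a self-adjoint element is a C*-subalgebra containing all the other generators, and the \emph{in particular} clause is obtained exactly as in the paper's introduction via Ogasawara's theorem and disjoint open sets in an infinite locally compact Hausdorff space. Your explicit remark that the statement must be read for nonzero elements (since $A_i^2=0$ forces $A_i=0$ only then yields a contradiction) is a fair point the paper leaves implicit.
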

\begin{proof}
This follows form the fact that given a self-adjoint element $A$ of
a C*-algebra $\A$ the set $\{B\in \A: AB=BA=0\}$ is a C*-subalgebra of $\A$.
\end{proof}

\begin{proposition} Suppose that an infinite dimensional
 C*-algebra  $\A$ is a von Neumann algebra. Then $\A$ has 
 an irredundant set of cardinality continuum.
\end{proposition}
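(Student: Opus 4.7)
The plan is to produce, inside any infinite dimensional von Neumann algebra $\A$, a commutative von Neumann subalgebra isomorphic to $\ell_\infty$, and then exhibit an irredundant family of cardinality $\cc$ among its projections by means of an almost disjoint family on $\N$.

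First I would extract an infinite sequence $(P_n)_{n\in\N}$ of nonzero pairwise orthogonal projections in $\A$. By Ogasawara's theorem cited in the introduction, $\A$ contains an infinite dimensional abelian C*-subalgebra, hence an element with infinite spectrum; applying the spectral theorem inside the von Neumann algebra $\A$ to this element yields the desired sequence. Let $\mathcal V\subseteq\A$ be the von Neumann subalgebra generated by $(P_n)_{n\in\N}$. Because pairwise orthogonal projections in a von Neumann algebra admit a strong operator supremum, for every $S\subseteq\N$ the projection $Q_S=\sum_{n\in S}P_n$ lies in $\mathcal V$, and the assignment $\chi_S\mapsto Q_S$ extends to a $*$-isomorphism $\ell_\infty\cong\mathcal V$.

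Next I would fix an almost disjoint family $\{A_\alpha:\alpha<\cc\}$ of infinite subsets of $\N$ of cardinality continuum and claim that $\{Q_{A_\alpha}:\alpha<\cc\}$ is irredundant. To verify this, for each $\alpha$ choose a non-principal ultrafilter $\mathcal U_\alpha$ on $A_\alpha$, extend it to an ultrafilter on $\N$, and transport it through $\ell_\infty\cong\mathcal V$ to a character $\tau_\alpha:\mathcal V\to\C$. Then $\tau_\alpha(Q_{A_\alpha})=1$ since $A_\alpha\in\mathcal U_\alpha$, while for $\beta\neq\alpha$ one has $\tau_\alpha(Q_{A_\beta})=0$ since $A_\alpha\cap A_\beta$ is finite and $\mathcal U_\alpha$ is non-principal on $A_\alpha$. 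The C*-subalgebra of $\A$ generated by $\{Q_{A_\beta}:\beta\neq\alpha\}$ sits inside the C*-closed $\mathcal V$, and as $\tau_\alpha$ is multiplicative and annihilates every generator it annihilates the whole subalgebra; since $\tau_\alpha(Q_{A_\alpha})=1\neq 0$, $Q_{A_\alpha}$ cannot belong to it.

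The only delicate point is Step 1, securing the countable orthogonal family of nonzero projections; after that the von Neumann hypothesis supplies a copy of $\ell_\infty$ inside $\A$ almost for free via strong-operator closure, and the rest reduces to the standard commutative observation (implicit in Lemma \ref{char-comm-unital-irr} and Remark \ref{remark-discrete}) that almost disjoint families on $\N$ produce, via ultrafilter-induced characters on $\ell_\infty$, irredundant sets of projections of maximal cardinality $\cc$.
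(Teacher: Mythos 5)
Your proof is correct and follows essentially the same route as the paper: extract an infinite orthogonal family of nonzero projections, realize a copy of $\ell_\infty$ inside the von Neumann algebra via strong-operator sums, and then take the projections indexed by an almost disjoint family of cardinality continuum. The only cosmetic difference is that you verify irredundance directly with ultrafilter-induced characters (in the spirit of Lemma \ref{lemma-biorthogonal}), where the paper instead cites the transfer of Boolean irredundance of $\wp(\N)$ to $\ell_\infty$; both verifications are standard and equivalent here.
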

\begin{proof}
An infinite dimensional von Neumann algebra has an infinite pairwise orthogonal
collection of projections and so it contains the
commutative C*-algebra $\ell_\infty$ which is $*$-isomorphic to $C(\beta\N)$.
The Boolean algebra $\wp(\N)$ is isomorphic to $\{\chi_U: U\in Clop(\beta\N)\}$
and so the Boolean irredundance of $\wp(\N)$ is equal to the irredundance of $\ell_\infty$
by Lemma \ref{cor-boolean}.
By considering an almost disjoint family (or an independent family) of cardinality continuum
 of subsets of $\N$ we obtain an irredundant set of cardinality continuum.
\end{proof}

The following Proposition corresponds to Remark  \ref{remark-discrete}.

\begin{lemma}\label{lemma-biorthogonal} Suppose that $\A$ is a C*-algebra, $\kappa$ a cardinal 
 $\{A_\xi: \xi<\kappa\}\subseteq\A_+$ and $\{\tau_\alpha: \alpha<\kappa\}$ a family of
 states such that
 \begin{itemize}
 \item $\tau_\alpha(A_\alpha)>0$
 \item $\tau_\alpha(A_\xi)=0$ for $\xi\not=\alpha$.
 \end{itemize}
 Then $\{A_\xi: \xi<\kappa\}$ is irredundant.
\end{lemma}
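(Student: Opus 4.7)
The plan is to fix $\alpha<\kappa$; to establish irredundance at index $\alpha$ it suffices to derive a contradiction from the assumption that $A_\alpha$ lies in the C*-subalgebra $\B_\alpha\subseteq\A$ generated by $\{A_\xi:\xi\neq\alpha\}$. The key step will be to show that $\tau_\alpha$ vanishes identically on $\B_\alpha$, which directly contradicts $\tau_\alpha(A_\alpha)>0$.

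First I would record a standard consequence of Cauchy--Schwarz for states: if $\tau$ is a state on a C*-algebra $\A$ and $a\in\A_+$ satisfies $\tau(a)=0$, then $\tau(ba)=\tau(ab)=0$ for every $b\in\A$. The proof is a single line: Cauchy--Schwarz gives $|\tau(ba)|^2\leq \tau(bb^*)\tau(a^2)$, and for positive $a$ we have the operator inequality $a^2\leq\|a\|a$, hence $\tau(a^2)\leq\|a\|\tau(a)=0$.

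Next I would apply this observation to the generators of $\B_\alpha$: since each $A_{\xi_i}$ with $\xi_i\neq\alpha$ is positive and lies in $\ker\tau_\alpha$, a single application yields $\tau_\alpha(A_{\xi_1}A_{\xi_2}\cdots A_{\xi_n})=0$ for every (noncommutative) monomial built from such generators. Because each $A_\xi$ is self-adjoint, these monomials together with their linear combinations span the non-unital $*$-subalgebra generated by $\{A_\xi:\xi\neq\alpha\}$. By norm continuity of the state $\tau_\alpha$, it therefore vanishes on the closure of this $*$-algebra, which is exactly $\B_\alpha$, completing the argument.

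The only real obstacle is the passage from ``$\tau_\alpha$ vanishes on each generator'' to ``$\tau_\alpha$ vanishes on the whole C*-subalgebra they generate''; for arbitrary linear functionals this step is false, and it is precisely here that the hypothesis $A_\xi\in\A_+$ is used, since it licenses the Cauchy--Schwarz step above. Once this is in place the conclusion is immediate, and there is no need to invoke multiplicativity or any further structure of $\tau_\alpha$.
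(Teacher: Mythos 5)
Your proof is correct. The paper's argument runs on the same engine --- the Cauchy--Schwarz inequality for the state $\tau_\alpha$ combined with positivity of the $A_\xi$ --- but packages it differently: it introduces the left kernel $L_\alpha=\{A\in\A:\tau_\alpha(A^*A)=0\}$, quotes (as in the GNS construction) that this is a closed left ideal, deduces that $A_\xi^{1/2}\in L_\alpha$ and hence $A_\xi=A_\xi^{1/2}\cdot A_\xi^{1/2}\in L_\alpha$ for every $\xi\neq\alpha$, so that the C*-subalgebra these self-adjoint elements generate sits inside $L_\alpha$, and finally excludes $A_\alpha$ from $L_\alpha$ via a second Cauchy--Schwarz-type inequality, $0<\tau_\alpha(A_\alpha)\leq\|\tau_\alpha\|\,\tau_\alpha(A_\alpha^*A_\alpha)$ (Theorem 3.3.2 of Murphy). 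You instead show directly that $\tau_\alpha$ annihilates every monomial in the generators, hence by linearity, self-adjointness of the generators, and norm continuity all of $\B_\alpha$, so the contradiction with $\tau_\alpha(A_\alpha)>0$ is immediate and no second inequality is needed. Your route is more self-contained (nothing is quoted from GNS theory) and yields the marginally stronger conclusion $\tau_\alpha|_{\B_\alpha}=0$, while the paper's is shorter on the page because the left-ideal property of $L_\alpha$ is cited rather than reproved; both correctly identify the positivity of the $A_\xi$ as the hypothesis that makes the Cauchy--Schwarz step work.
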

\begin{proof}
As in the GNS construction one proves that $L_\alpha=\{A\in \A: \tau_\alpha(A^*A)=0\}$ 
is a left-ideal in $\A$, and in particular a C*-subalgebra. So $X_\xi\in L_\alpha$, where
$A_\xi=X_\xi^*X_\xi$ and so $A_\xi\in L_\alpha$ for all $\xi\not=\alpha$. However by Theorem 3.3.2. 
of \cite{murphy} we have
$$0<\tau_\alpha(A_\alpha)\leq\|\tau_\alpha\|\tau_\alpha(A_\alpha^*A_\alpha),$$
so $A_\alpha\not \in L_\alpha$.
\end{proof}

In the noncommutative case, for pure states $\{\tau_\alpha: \alpha<\kappa\}$
being discrete in the weak* topology 
does not yield in general the existence of positive elements $A_\alpha$
like in the lemma above, as the noncommutative Urysohn lemmas require extra hypotheses
(\cite{urysohn}). 

The following proposition is a version of the commutative characterizations
in Lemmas \ref{char-comm-unital-irr} and \ref{char-comm-irr}.
It could be interesting to remark that a version of the following proposition
where "representations" are replaced by "irreducible representations" implies the
noncommutative Stone-Weierstrass theorem, which remains a well-known open problem.
 One should note that
below one of the possibilities of the representation is to be constantly zero.

\begin{proposition}[\cite{epimorphisms}]\label{epi-irr} Suppose that $\A$ is a C*-algebra and 
$\mathcal X\subseteq \A$ is an irredundant set.
Then for all $a\in \mathcal X$ 
there are Hilbert spaces $H_a$ and  representations $\pi_a^1, \pi_a^2: \A\rightarrow \B(H_a)$
such that $\pi_a^1(a)\not=\pi_a^2(a)$ but
$$\mathcal X\setminus\{a\}\subseteq\{b\in A: \pi_a^1(b)=\pi_a^2(b)\}.$$
\end{proposition}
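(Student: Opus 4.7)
The plan is to realize the two representations as an inner perturbation of the universal representation. Fix $a\in\mathcal{X}$ and let $\mathcal{D}$ denote the C*-subalgebra of $\A$ generated by $\mathcal{X}\setminus\{a\}$; by irredundance, $a\notin\mathcal{D}$. Let $\pi_u\colon\A\to\B(H_u)$ be the universal representation and set $H_a:=H_u$. The goal is to find a unitary $u\in\B(H_u)$ which lies in the commutant $\pi_u(\mathcal{D})'$ but fails to commute with $\pi_u(a)$. Then $\pi_a^1:=\pi_u$ and $\pi_a^2:=\operatorname{Ad}(u)\circ\pi_u$ are both representations of $\A$ into $\B(H_a)$; they coincide on $\mathcal{D}$ (hence on $\mathcal{X}\setminus\{a\}\subseteq\mathcal{D}$) but disagree at $a$.

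The heart of the argument is to establish $\pi_u(a)\notin\pi_u(\mathcal{D})''$, and more strongly the identity
\[
\pi_u(\A)\cap\pi_u(\mathcal{D})''=\pi_u(\mathcal{D}).
\]
By Kaplansky density, any element of $\pi_u(\mathcal{D})''$ is a $\sigma$-weak limit of a norm-bounded net from $\pi_u(\mathcal{D})$. The crucial property of the universal representation is that the $\sigma$-weak topology restricted to $\pi_u(\A)$ coincides with the weak topology of $\A$ inherited from $\A^*$: by the very construction of $\pi_u$ every state of $\A$ appears as a vector state on $H_u$, and Jordan decomposition linearly spans $\A^*$ from states. Consequently a net from $\mathcal{D}$ converging $\sigma$-weakly to an element of $\pi_u(\A)$ converges weakly in $\A$, and since $\mathcal{D}$ is norm-closed (thus weakly closed, being convex), the limit lies in $\mathcal{D}$.

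Once $\pi_u(a)\notin\pi_u(\mathcal{D})''$ is in hand, the double commutant theorem produces some $x\in\pi_u(\mathcal{D})'$ with $[x,\pi_u(a)]\neq 0$. Because $\pi_u(\mathcal{D})'$ is a von Neumann algebra it is linearly spanned by its unitaries, so some unitary $u\in\pi_u(\mathcal{D})'$ must fail to commute with $\pi_u(a)$, completing the construction.

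The main obstacle is the topological identification $\pi_u(\A)\cap\pi_u(\mathcal{D})''=\pi_u(\mathcal{D})$ used above. For an arbitrary faithful representation of $\A$ the intersection can be strictly larger than $\pi_u(\mathcal{D})$, and the equality here genuinely depends on the universality of $\pi_u$; one can read this as the statement that a proper C*-subalgebra is never an epimorphic inclusion (the content of \cite{epimorphisms}). The remainder of the argument — Kaplansky density, the double commutant theorem, and the inner-automorphism trick — is standard.
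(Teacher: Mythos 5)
The paper offers no proof of this proposition; it is quoted from \cite{epimorphisms} as an instance of the fact that a proper C*-subalgebra inclusion is never an epimorphism. Your strategy is the right core idea (and is essentially the Hofmann--Neeb argument), but as written it has a genuine gap at the step ``by Kaplansky density, any element of $\pi_u(\mathcal{D})''$ is a $\sigma$-weak limit of a norm-bounded net from $\pi_u(\mathcal{D})$.'' Kaplansky density places the unit ball of $\pi_u(\mathcal{D})$ densely into the unit ball of the \emph{strong closure} of $\pi_u(\mathcal{D})$, and the strong closure equals the bicommutant only when $\pi_u(\mathcal{D})$ acts non-degenerately on $H_u$. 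When it acts degenerately (which happens whenever $\mathcal{D}$ fails to contain an approximate unit for $\A$), writing $p$ for the projection onto $\overline{\pi_u(\mathcal{D})H_u}$ one gets $\pi_u(\mathcal{D})''=\overline{\pi_u(\mathcal{D})}^{\,\mathrm{sot}}+\C(1-p)$ with $1-p\neq 0$, and the extra summand can swallow elements of $\pi_u(\A)\setminus\pi_u(\mathcal{D})$. Concretely, take $\A=\C^2$, $\mathcal{X}=\{e_1,e_2\}$ with $e_1=(1,0)$, $e_2=(0,1)$ (an irredundant set), $a=e_2$, $\mathcal{D}=\C e_1$. In the universal representation $\pi_u(e_1)+\pi_u(e_2)=1$, so $\pi_u(a)=1-\pi_u(e_1)\in\pi_u(\mathcal{D})''$, and \emph{every} unitary in $\pi_u(\mathcal{D})'$ commutes with $\pi_u(a)$; no inner perturbation of $\pi_u$ can separate $a$ from $\mathcal{D}$. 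The proposition still holds there, but only via a degenerate pair such as $(x,y)\mapsto\mathrm{diag}(x,y)$ versus $(x,y)\mapsto\mathrm{diag}(x,0)$ --- which is exactly what the paper's remark preceding the proposition (that one of the representations may be zero) is warning about.

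The repair is short: replace $\pi_u$ by $\pi:=\pi_u\oplus 0$ acting on $H_a:=H_u\oplus H_u$. The support projection of $\pi(\mathcal{D})$ is then $p\oplus 0$, so an element of $\pi(\A)\cap\pi(\mathcal{D})''$ has the form $m+\lambda\bigl((1-p)\oplus 1\bigr)$ with $m$ in the strong closure of $\pi(\mathcal{D})$; restricting to the second summand, where $\pi(\A)$ and $m$ both vanish, forces $\lambda=0$, and now your Kaplansky-plus-weak-closure argument (which is correct: the $\sigma$-weak topology on $\pi_u(\A)$ is the $\sigma(\A,\A^*)$-topology, and $\mathcal{D}$ is norm-closed and convex, hence weakly closed) applies verbatim to give $\pi(\A)\cap\pi(\mathcal{D})''=\pi(\mathcal{D})$. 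The unitary-conjugation step then goes through unchanged. With this modification --- or under the additional hypothesis that $\pi_u(\mathcal{D})$ acts non-degenerately, e.g.\ when $\A$ is unital with $1_\A\in\mathcal{D}$ --- your proof is correct.
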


\subsection{Irredundance in scattered C*-algebras}

The following proposition shows that thin-tall algebras play a special role
 in the context of uncountable irredundant sets.
 
 \begin{proposition}\label{reduction-thin-tall} If there is a nonseparable
  scattered C*-algebra with no uncountable irredundant set, then it contains a thin-tall
  scattered C*-algebra.
 \end{proposition}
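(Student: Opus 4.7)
The plan is to argue by contradiction: assume $\A$ is nonseparable, scattered, with no uncountable irredundant set, and exhibit a thin-tall subalgebra of $\A$. The candidate is the ideal $\J := \I_{\omega_1}^{At}(\A)$ coming from the Cantor-Bendixson composition series of Theorem \ref{theorem1}(2); it will suffice to show that every layer $\mathcal{L}_\alpha := \I^{At}(\A/\I_\alpha^{At}(\A))$ with $\alpha<\omega_1$ is separable, since from this it will follow automatically that $ht(\A)\geq\omega_1$ and that $\J$ is thin-tall.

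First I would establish the key claim: for every $\alpha<\omega_1$ with $\alpha<ht(\A)$, the layer $\mathcal{L}_\alpha$ is separable. Suppose toward a contradiction that some $\mathcal{L}_\alpha$ is nonseparable. By Proposition \ref{atoms}(2), $\mathcal{L}_\alpha$ embeds into $\mathcal{K}(H)$ for some Hilbert space $H$; being generated by minimal projections, it is a dual C*-algebra, and the classical structure theorem for C*-subalgebras of $\mathcal{K}(H)$ decomposes $\mathcal{L}_\alpha$ as a $c_0$-direct sum of elementary algebras $\mathcal{K}(H_i)$ indexed by some set $I$. Nonseparability forces either $I$ to be uncountable or some $H_i$ to be nonseparable, and in either case one extracts an uncountable pairwise orthogonal family $(Q_\xi)_{\xi<\omega_1}$ of nonzero projections in $\mathcal{L}_\alpha\subseteq \A/\I_\alpha^{At}(\A)$. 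By Proposition \ref{orthogonal} this family is irredundant in $\A/\I_\alpha^{At}(\A)$. Now pick arbitrary lifts $a_\xi\in\A$ with $a_\xi+\I_\alpha^{At}(\A)=Q_\xi$. If some $a_{\xi_0}$ belonged to the C*-subalgebra of $\A$ generated by $\{a_\xi:\xi\neq\xi_0\}$, applying the quotient $*$-homomorphism to both sides would place $Q_{\xi_0}$ in the C*-subalgebra generated by $\{Q_\xi:\xi\neq\xi_0\}$, contradicting irredundance. Hence $(a_\xi)_{\xi<\omega_1}$ is an uncountable irredundant set in $\A$, contradicting the hypothesis.

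From the claim, $ht(\A)\geq\omega_1$ follows by an easy transfinite induction: a countable-height scattered C*-algebra built from separable layers must itself be separable, contradicting nonseparability of $\A$. Then, for $\J := \I_{\omega_1}^{At}(\A)$, a transfinite induction on $\alpha\leq\omega_1$ identifies its own Cantor-Bendixson composition series with $(\I_\alpha^{At}(\A))_{\alpha\leq\omega_1}$; the inductive step rests on the elementary observation that a projection in an ideal $\mathcal{I}\subseteq\mathcal{B}$ is minimal in $\mathcal{I}$ if and only if it is minimal in $\mathcal{B}$, which combined with $\I^{At}(\A/\I_\alpha^{At}(\A))\subseteq \J/\I_\alpha^{At}(\A)$ yields $\I^{At}(\J/\I_\alpha^{At}(\A))=\I^{At}(\A/\I_\alpha^{At}(\A))$. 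Limit stages are handled by continuity of the series. Hence $ht(\J)=\omega_1$ with all layers separable, so $\J$ is thin-tall.

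The main obstacle will be the key claim, and within it the invocation of the structure theorem for C*-subalgebras of $\mathcal{K}(H)$ to produce the uncountable pairwise orthogonal family inside a nonseparable layer — a direct ``Ramsey''-type extraction of an orthogonal family from an arbitrary uncountable set of finite-rank projections in $\mathcal{K}(H)$ fails, so the dual C*-algebra structure of $\mathcal{L}_\alpha$ must be genuinely used. By contrast, lifting irredundance along the quotient $\A\to\A/\I_\alpha^{At}(\A)$ is immediate from functoriality of the C*-algebra generated by a subset under $*$-homomorphisms, and the final identification of the Cantor-Bendixson series of $\J$ is a routine transfinite computation.
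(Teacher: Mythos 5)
Your proof is correct and follows essentially the same route as the paper: both arguments show that every Cantor--Bendixson layer $\I^{At}(\A/\I_\alpha^{At}(\A))$ must be separable, since a nonseparable subalgebra of the compacts would contain an uncountable pairwise orthogonal (hence, by Proposition \ref{orthogonal}, irredundant) family, and then take $\I_{\omega_1}^{At}(\A)$ as the thin-tall subalgebra. Your write-up additionally makes explicit two steps the paper leaves implicit --- lifting the irredundant set from the quotient back to $\A$, and verifying that the composition series of $\I_{\omega_1}^{At}(\A)$ agrees with the restriction of that of $\A$ --- both of which are handled correctly.
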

 \begin{proof} First note that by the characterization of
 subalgebras of the algebra of compact operators (\cite{invitation}) a C*-algebra which is isomorphic to
 a subalgebra of the algebra  of all 
 compact operators on a Hilbert space $\mathcal H$ but not isomorphic to
 a subalgebra of the algebra  of all 
 compact operators on the separable Hilbert space $\mathcal H$ must contain an uncountable
 pairwise orthogonal set which is irredundant by \ref{orthogonal}. So 
 if a scattered $\A$ has no uncountable irredundant set, then all the algebras
 $\I^{At}(\A/\I_\alpha)$ are *-isomorphic to a subalgebra of the algebra of
 all compact operators on the separable or finite dimensional Hilbert space, but
 as $\A$ is nonseparable $ht(\A)\geq\omega_1$ and so $\I_{\omega_1}$ is the 
 required thin-tall subalgebra of $\A$.  
 
 \end{proof}
 
 \begin{corollary}\label{cor-oca-discrete} Assume PFA. Suppose that $\A$ is a nonseparable scattered C*-algebra. Then
 there is an uncountable  weak$^*$ discrete set  of pure states of $\A$.
 \end{corollary}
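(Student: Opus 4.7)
The plan is to follow the dichotomy used in the proof of Proposition \ref{reduction-thin-tall}. Let $(\I_\alpha)_{\alpha\leq ht(\A)}$ denote the Cantor-Bendixson composition series of $\A$ from Theorem \ref{theorem1}(2). Then either some $\I^{At}(\A/\I_\alpha)$ is $*$-isomorphic to a subalgebra of $\mathcal K(H)$ for a nonseparable Hilbert space $H$, or every $\I^{At}(\A/\I_\alpha)$ is $*$-isomorphic to a subalgebra of $\mathcal K(H)$ for a separable or finite-dimensional $H$.

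In the first case, by the characterization of subalgebras of compact operators recalled in the proof of \ref{reduction-thin-tall}, the quotient contains an uncountable pairwise orthogonal family $(P_\xi)_{\xi<\omega_1}$ of minimal projections. Each $P_\xi$ yields a pure state $\rho_\xi$ on $\A/\I_\alpha$ with $\rho_\xi(P_\xi)=1$, and I set $\tau_\xi := \rho_\xi \circ q$, where $q\colon \A\to \A/\I_\alpha$ is the quotient map; each $\tau_\xi$ is a pure state on $\A$. Choose lifts $\tilde P_\xi\in \I_{\alpha+1}$ of $P_\xi$. From $P_\xi P_\eta = 0$ together with the standard identity $\rho(A(1-P))=0$ whenever $\rho$ is a state with $\rho(P)=1$, I obtain $\tau_\eta(\tilde P_\xi)=\rho_\eta(P_\xi)=0$ whenever $\eta \neq \xi$, while $\tau_\xi(\tilde P_\xi)=1$. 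Hence the basic weak$^*$-open sets $\{\tau : \mathrm{Re}\,\tau(\tilde P_\xi) > 1/2\}$ isolate each $\tau_\xi$ from the remaining $\tau_\eta$, giving weak$^*$-discreteness.

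In the second case, exactly as in the proof of Proposition \ref{reduction-thin-tall}, nonseparability of $\A$ forces $ht(\A)\geq \omega_1$, and then $\I_{\omega_1}$ is a thin-tall C*-subalgebra of $\A$. Proposition \ref{oca-discrete} applies to $\I_{\omega_1}$ and, under PFA, produces an uncountable weak$^*$-discrete set $(\sigma_\xi)_{\xi<\omega_1}$ of pure states on $\I_{\omega_1}$. By the classical pure state extension theorem, each $\sigma_\xi$ admits an extension to a pure state $\tau_\xi$ on $\A$. Since the restriction map $r\colon \A^*\to (\I_{\omega_1})^*$ is weak$^*$-continuous and $r(\tau_\xi)=\sigma_\xi$, any weak$^*$-open neighborhood $V$ of $\sigma_\xi$ isolating it from $(\sigma_\eta)_{\eta\neq\xi}$ pulls back to a weak$^*$-open neighborhood $r^{-1}(V)$ of $\tau_\xi$; and since $r(\tau_\eta)=\sigma_\eta\notin V$ for $\eta\neq\xi$, this neighborhood isolates $\tau_\xi$ from the remaining $\tau_\eta$, yielding the required weak$^*$-discrete set in $\A^*$.

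The main point requiring care is the extension step in Case 2: one must check that applying Hahn-Banach/pure state extension does not destroy weak$^*$-discreteness, but this follows immediately from the weak$^*$-continuity of the restriction map. Case 1 is largely bookkeeping around the Urysohn-type property enjoyed by pairwise orthogonal projections through their associated pure states, which is what fails in general and is precisely why the thin-tall case of Proposition \ref{oca-discrete} needed the S-space theorem under PFA.
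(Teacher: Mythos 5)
Your proof is correct and follows essentially the same route as the paper's: the same dichotomy on the Cantor--Bendixson quotients, reducing to the thin-tall case and invoking Proposition \ref{oca-discrete} under PFA. The only substantive addition is that you make explicit the pure-state extension from the ideal $\I_{\omega_1}$ to $\A$ and the preservation of weak$^*$-discreteness via the weak$^*$-continuous restriction map --- a detail the paper's ``we may assume that $\A$ is thin-tall'' leaves implicit.
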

 \begin{proof} First suppose that $\A$ has a quotient that contains an uncountable
 orthogonal set of projections. Then it is clear that we can find pure states which
 form  a weak$^*$ discrete set.
 Otherwise using the argument as in the proof of Proposition
 \ref{reduction-thin-tall} we may assume that $\A$ is thin-tall. 
 By Proposition \ref{oca-discrete} $\A$ has an uncountable weak$^*$ discrete set  of pure states.
 \end{proof}
 
Below we prove a simple noncommutative version of a  Theorem of 
 McKenzie (see 4.2.3 of \cite{koppelberg}):

 \begin{theorem}\label{cardinal-inequality-scat} If $\A$ is a scattered C*-algebra, then
 $$d(\A)\leq 2^{irr(\A)}.$$
 \end{theorem}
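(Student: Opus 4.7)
\noindent The plan is to analyze the Cantor--Bendixson composition series $(\I_\alpha^{At}(\A))_{\alpha\leq ht(\A)}$ of $\A$ from Theorem \ref{theorem1}(2), separately bounding levelwise density and the length of the series in terms of $\kappa = irr(\A)$. The statement is trivial if $\A$ is separable, so assume $\kappa \geq \aleph_0$. By Proposition \ref{irr-projections} irredundant sets in $\A$ may be taken to consist of projections, and by Proposition \ref{orthogonal} pairwise orthogonal projections are automatically irredundant.

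\medskip

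\noindent First I would show $d(\I^{At}(\A/\I_\alpha^{At}(\A)))\leq\kappa$ for each $\alpha<ht(\A)$. By Proposition \ref{atoms}(2) this factor embeds in $\mathcal{K}(\mathcal H)$ for some Hilbert space and hence decomposes as a $c_0$-direct sum $\bigoplus_{j\in J_\alpha}\mathcal{K}(\mathcal H_{\alpha,j})$; its density equals the cardinality of a maximal orthogonal family of its minimal projections. Using real rank zero (Theorem \ref{theorem1}(5)) one lifts such a family to projections in $\A$; any redundancy among these lifts would, upon quotienting by $\I_\alpha^{At}(\A)$, produce a redundancy among the pairwise orthogonal images, contradicting Proposition \ref{orthogonal}. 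Hence the family has size at most $\kappa$.

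\medskip

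\noindent Second I would bound $|ht(\A)|$ by $\kappa$ via Lemma \ref{lemma-biorthogonal}. Recursively on $\alpha<ht(\A)$, choose a minimal projection $p_\alpha \in \A/\I_\alpha^{At}(\A)$, a projection lift $P_\alpha\in\I_{\alpha+1}^{At}(\A)$, and a lifted pure state $\tau_\alpha$ supported at $p_\alpha$, subject to the constraint $\tau_\gamma(P_\alpha)=0$ for every $\gamma<\alpha$. The remaining conditions $\tau_\alpha(P_\alpha)>0$ and $\tau_\alpha(P_\beta)=0$ for $\beta<\alpha$ then hold automatically, the latter since $P_\beta\in\I_\alpha^{At}(\A)$ and $\tau_\alpha$ factors through $\A/\I_\alpha^{At}(\A)$. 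Executing this recursion is the main obstacle: at each stage $\alpha$ one must find a minimal projection $p_\alpha$ and a lift $P_\alpha$ vanishing under $|\alpha|$ previously fixed states. The hope is to do so by a pigeonhole on the summands $\mathcal{K}(\mathcal H_{\alpha,j})$ of $\I^{At}(\A/\I_\alpha^{At}(\A))$ together with the freedom in choosing the unit vector within a summand that determines $p_\alpha$, and the freedom in choosing the lift modulo $\I_\alpha^{At}(\A)$. Once the recursion succeeds, Lemma \ref{lemma-biorthogonal} delivers an irredundant family of size $|ht(\A)|$, forcing $|ht(\A)|\leq\kappa$.

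\medskip

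\noindent Combining the two bounds via the transfinite estimate $d(\I_{\alpha+1}^{At}(\A))\leq d(\I_\alpha^{At}(\A))+d(\I^{At}(\A/\I_\alpha^{At}(\A)))$ at successors and continuity at limits yields $d(\A)\leq|ht(\A)|\cdot\kappa\leq\kappa\leq 2^\kappa$, in fact the formally stronger $d(\A)\leq\kappa$. The principal difficulty is the recursive construction in the second step, where the rigidity of the projection-lift condition must be reconciled with $|\alpha|$ many orthogonality-to-state requirements at each stage.
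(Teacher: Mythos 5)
Your Step 2 is where the whole argument lives, and it cannot be repaired: the bound $|ht(\A)|\leq irr(\A)$ it is meant to deliver is false, and is in fact refuted by this paper's own main construction. Under $\diamondsuit$, Theorem \ref{theorem-main} (with Proposition \ref{thintallfully}) produces a thin-tall scattered C*-algebra $\A^\GG$ with $ht(\A^\GG)=d(\A^\GG)=\omega_1$ but with no uncountable irredundant set, so $irr(\A^\GG)=\omega<|ht(\A^\GG)|$. Consequently the recursion you describe --- at stage $\alpha$ choosing a minimal projection $p_\alpha$ together with a projection lift $P_\alpha$ annihilated by all previously chosen states $\tau_\gamma$, $\gamma<\alpha$ --- must break down at some countable stage for this algebra; no pigeonhole on the summands $\mathcal K(\mathcal H_{\alpha,j})$ can rescue it, since if the recursion always succeeded, Lemma \ref{lemma-biorthogonal} would produce an uncountable irredundant set in $\A^\GG$. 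You flagged exactly this step as the ``main obstacle'' and only expressed hope that it works; that hope is (consistently) provably vain. The same example shows that your announced strengthening $d(\A)\leq irr(\A)$ is not a ZFC theorem: the exponential in $d(\A)\leq 2^{irr(\A)}$ is essential. Your Step 1, bounding the density of each Cantor--Bendixson factor by $irr(\A)$ via lifted orthogonal families, is sound (it is essentially the paper's argument applied levelwise), but by itself it only yields $d(\A)\leq |ht(\A)|\cdot irr(\A)$, which proves nothing without a bound on $ht(\A)$.

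For contrast, the paper's proof avoids the composition series entirely, and this is precisely where the exponential enters. Let $\kappa$ be minimal such that $\I^{At}(\A)$ is a subalgebra of $\mathcal K(\ell_2(\kappa))$. By the characterization of subalgebras of the compacts (\cite{invitation}), $\A$ contains a pairwise orthogonal family of cardinality $\kappa$, which is irredundant by Proposition \ref{orthogonal}, so $\kappa\leq irr(\A)$. Since $\I^{At}(\A)$ is an essential ideal (Proposition \ref{atomic}), $\A$ embeds into $\B(\ell_2(\kappa))$, whence $d(\A)\leq 2^{\kappa}\leq 2^{irr(\A)}$. The height of $\A$ is never controlled --- only the ``width'' of the bottom level is, and the ambient $\B(\ell_2(\kappa))$ absorbs everything above it at the cost of an exponential.
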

\begin{proof}
Let $\kappa$ be the minimal cardinal such that $\I^{At}(\A)$ is a 
subalgebra of the algebra of all compact operators on $\ell_2(\kappa)$.
By the characterization of
 subalgebras of the algebra of compact operators (\cite{invitation}) $\A$ must contain 
 pairwise orthogonal set of cardinality $\kappa$ which is irredundant by \ref{orthogonal}.
 So $\kappa\leq irr(\A)$. By the essentiality of $\I^{At}(\A)$ 
 which follows from Proposition \ref{atomic} we can embed $\A$ into $\B(\ell_2(\kappa))$,
 so $d(\A)\leq 2^\kappa\leq 2^{irr(\A)}$ as required.
 \end{proof}

\subsection{Extracting irredundant sets from a given collection of operators}

\begin{proposition}\label{prop-olsen} 
There is a collection of operators $(A_\xi: \xi<\omega_1)$ in $\B(\ell_2)$ 
which generates a nonseparable C*-subalgebra of $\B(\ell_2)$ with no two-element irredundant subset.
Any fully noncommutative thin-tall C*-algebra is generated by such a sequence.
 \end{proposition}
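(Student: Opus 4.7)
The plan is to reduce both statements to Thiel's theorem \cite{thiel} that every separable AF C*-algebra is singly generated. The universal strategy is the following: present the target algebra as the norm closure of a strictly increasing $\omega_1$-chain $(\A_\alpha)_{\alpha<\omega_1}$ of separable AF C*-subalgebras, pick by Thiel $A_\alpha\in\A_\alpha$ with $C^*(A_\alpha)=\A_\alpha$, and observe that $A_\beta\in\A_\beta\subseteq\A_\alpha=C^*(A_\alpha)$ for $\beta<\alpha$ forces every pair $\{A_\alpha,A_\beta\}$ to fail irredundancy, while $C^*(\{A_\alpha:\alpha<\omega_1\})=\overline{\bigcup_\alpha\A_\alpha}$ is the nonseparable algebra we targeted.

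For the conditional (second) statement I would set $\A_\alpha=\I_\alpha$, where $(\I_\alpha)_{\alpha\le\omega_1}$ is the Cantor--Bendixson composition series of the given fully noncommutative thin-tall C*-algebra $\A$ from Theorem \ref{theorem1}(\ref{ec-sequence-new}). Thin-tallness forces each factor $\I_{\alpha+1}/\I_\alpha$ to be separable, so a transfinite induction (using continuity of the series at limit stages) yields that each $\I_\alpha$ with $\alpha<\omega_1$ is separable; it is also AF, being a subalgebra of the scattered algebra $\A$. Thiel's theorem then supplies generators $A_\alpha\in\I_\alpha$ with $C^*(A_\alpha)=\I_\alpha$, and the universal strategy finishes.

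For the first (existence) statement a concrete ZFC chain suffices. Realize $\ell_\infty$ as the diagonal operators on $\ell_2$, fix in ZFC an uncountable almost disjoint family $\{X_\alpha:\alpha<\omega_1\}\subseteq\wp(2\N)$ of infinite subsets of the even integers, and set $\A_\alpha=C^*(\{\chi_{X_\beta}:\beta<\alpha\}\cup\{1\})\subseteq\ell_\infty$. Each $\A_\alpha$ is separable, commutative and generated by projections, hence a directed union of finite-dimensional C*-subalgebras and therefore AF. A short calculation on the atoms of the finite Boolean subalgebras of $\wp(\omega)$ generated by the $X_\beta$'s --- atoms labelled by nonempty $T\subseteq F$ almost miss $X_\alpha$ by almost disjointness, while the ``empty'' atom meets both $X_\alpha$ and $2\N+1\subseteq\omega\setminus X_\alpha$ in infinite sets --- gives $\|\chi_{X_\alpha}-g\|_\infty\ge 1/2$ for every simple function $g\in\A_\alpha$, and hence for every $g\in\A_\alpha$ by density. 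So the chain is strictly increasing with nonseparable closure and the universal strategy applies.

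The only nontrivial input is Thiel's single-generation theorem; everything else is routine chain bookkeeping and a short almost-disjoint distance estimate. No substantial obstacle is anticipated.
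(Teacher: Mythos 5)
Your proof is correct, and on the second assertion it is essentially the paper's own argument: the paper likewise takes the Cantor--Bendixson composition series $(\I^{At}_\alpha(\A))_{\alpha<\omega_1}$ of the given fully noncommutative thin-tall algebra, observes that each $\I^{At}_\alpha(\A)$ is separable and, being a (separable) subalgebra of a scattered algebra, locally finite-dimensional and hence AF, and then invokes Thiel's single-generation theorem to produce the generators, exactly as you do. The divergence is in the existence statement. The paper produces its witness by citing the ZFC construction of a fully noncommutative thin-tall C*-algebra from Theorem 7.6 of \cite{cantor-bendixson}, with $\I^{At}_{\alpha+1}(\A)$ isomorphic to $\widetilde{\I^{At}_\alpha(\A)}\otimes\mathcal K(\ell_2)$, and applies the Olsen--Zame theorem \cite{olsen} to these ideals; so its existence part is really the second part specialized to a particular noncommutative algebra. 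Your almost-disjoint-family chain inside $\ell_\infty$ is a genuinely different and more elementary witness: it is self-contained (the estimate $\|\chi_{X_\alpha}-g\|_\infty\geq 1/2$ via the empty atom is correct, and pairwise distance $1$ between the $\chi_{X_\alpha}$ already gives nonseparability), and in fact for a separable unital $C(K)$ with $K$ metrizable and totally disconnected you do not even need Thiel, since such a $K$ embeds into the Cantor set in $\R$ and the algebra is generated by one self-adjoint element. What you give up is only thematic: the paper's witness is fully noncommutative, in keeping with its emphasis that these phenomena are not commutative artifacts, while yours is commutative. Both routes prove the proposition as stated.
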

\begin{proof} 

Construct a fully noncommutative thin-tall C*-algebra
$\A$ as in Theorem 7.6. of \cite{cantor-bendixson}, in particular
with Cantor-Bendixson decomposition $(\I_\alpha^{At}(\A))_{\alpha<\omega_1}$ (see \ref{theorem1} (2)), where
$\I_{\alpha+1}^{At}(\A)$ is *-isomorphic to $\widetilde{\I_\alpha^{At}(\A)}\otimes \mathcal K(\ell_2)$.

By Theorem 8 of \cite{olsen} any C*-algebra of the form $\B\oplus\mathcal K(\ell_2)$
is singly generated if $\B$ is separable and unital. So for each $\alpha<\omega_1$ pick $A_\alpha$
to be a single generator of $\I_{\alpha+1}^{At}(\A)$.

An alternative approach which gives the final statement of the Proposition is
to use the fact that  scattered C*-algebras are locally finite dimensional (see \cite{farah-katsura}
for more on these notions in the nonseparable context)
in the sense that  each of its  finite  subsets can be approximated from
a finite dimensional C*-subalgebra (\cite{kusuda}, \cite{lin}). So $\I_\alpha^{At}(\A)$
is locally finite dimensional and separable for each $\alpha<\omega$ and so
AF. Thus the result of \cite{thiel} implies that $\I_\alpha^{At}(\A)$ is singly generated
for each $\alpha<\omega_1$. So pick $A_{\alpha+1}$ as before.
This completes the proof of the theorem.

\end{proof}

Using the free set lemmas like in \cite{enflo} one can prove
that given a discrete set of operators $(\A_\alpha)_{\alpha<\omega_n}$ for $n\in \N$
there is an $n$-element irredundant set. However there is much stronger
consistent extraction principle:

\begin{theorem} It is relatively consistent that whenever
$(A_\xi: \xi<2^\omega)$ is a collection of operators in $\B(\ell_2)$ which generates
a C*-algebra of density continuum, then there is a set $I\subseteq 2^\omega$ of cardinality
continuum such that $(A_\xi: \xi\in I)$ is irredundant.
\end{theorem}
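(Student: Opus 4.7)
The plan is to derive this as a repackaging of Theorem \ref{theorem-cohen}, working in the same model of set theory, where $2^\omega=\omega_2$ and every norm discrete collection $(A_\xi:\xi<\omega_2)$ of operators in $\B(\ell_2)$ has an irredundant subcollection of cardinality $\omega_2$. All that remains is to extract a norm discrete subcollection of size $2^\omega$ from any collection whose generated C*-subalgebra has density $2^\omega$, and then invoke Theorem \ref{theorem-cohen}.

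First I would record the soft bound that a C*-algebra generated by a set of cardinality $\kappa\geq\omega$ has density at most $\kappa$: the $*$-polynomial expressions in the generators form a norm dense $*$-subalgebra of size $\kappa$. Consequently the set $S=\{A_\xi:\xi<2^\omega\}$ must itself have cardinality $2^\omega$; after reindexing we may assume the $A_\xi$ are pairwise distinct.

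Next, for each $n\in\N$ let $D_n\subseteq S$ be a maximal $(1/n)$-separated subset, that is, a maximal subset such that $\|A-B\|\geq 1/n$ for any two distinct $A,B\in D_n$. By maximality, every element of $S$ lies within distance $1/n$ of some element of $D_n$, so $\bigcup_{n\in\N} D_n$ is norm dense in $S$, and hence $C^*(S)=C^*(\bigcup_n D_n)$. If each $D_n$ had cardinality strictly less than $2^\omega$, then $\bigcup_n D_n$ would also have cardinality less than $2^\omega$, and by the soft bound above $C^*(S)$ would have density less than $2^\omega$, contradicting the hypothesis on $(A_\xi:\xi<2^\omega)$. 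Therefore some $D_n$ has cardinality $2^\omega$ and is norm discrete by construction; enumerating it as $(B_\xi:\xi<\omega_2)$ and applying Theorem \ref{theorem-cohen} produces an irredundant subcollection indexed by a set $I\subseteq 2^\omega$ of cardinality $2^\omega$, which pulls back to the desired $I$ in the original indexing.

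The only substantive step is Theorem \ref{theorem-cohen} itself; the extraction of a norm discrete subcollection is a routine counting argument. The mild potential obstacle is making sure that passing to a subcollection to make the $A_\xi$ pairwise distinct does not destroy the density-$2^\omega$ property of the generated algebra, but since a dense subset of $S$ generates the same C*-algebra, any subcollection of size $2^\omega$ consisting of the distinct values of the $A_\xi$ already generates an algebra of density $2^\omega$, so nothing is lost.
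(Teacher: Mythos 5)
Your reduction is circular within this paper. Theorem \ref{theorem-cohen} is stated in the introduction without an attached proof, and the statement you are being asked to prove is precisely the paper's justification of it: the paper proves the present theorem directly by a forcing argument, and Theorem \ref{theorem-cohen} is then read off from it (a norm discrete collection of size $\omega_2$ can be thinned to an $\varepsilon$-separated one of the same size, which generates an algebra of density $\omega_2$). By invoking Theorem \ref{theorem-cohen} as a black box you have assumed the entire substantive content of the result; what you have actually established is only that the two formulations are equivalent modulo a counting argument. That counting argument (maximal $(1/n)$-separated subsets, regularity of $\omega_2$ to find one of full size) is correct, and is in the same spirit as the paper's own first reduction, but it is not a proof of either statement.

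What is missing is the consistency argument itself. The paper works in the model obtained by adding $\omega_2$ Cohen reals over a ground model of GCH. Given $(A_\xi:\xi<2^\omega)$ generating an algebra of density continuum, one first thins the family so that each $A_\xi$ lies at distance greater than a fixed rational $\varepsilon>0$ from the C*-algebra generated by its predecessors. Each operator is then coded as an $\N\times\N$ matrix with respect to a fixed ground-model orthonormal basis and replaced by a nice name; the countable chain condition, the $\Delta$-system lemma applied to the countable supports of these names, and the homogeneity of the Cohen forcing under permutations of $\omega_2$ yield $\omega_2$ many names that are pairwise conjugate by automorphisms of the forcing. Since any witness to redundance involves only countably many indices, one permutes the distinguished index above that countable set and uses the preserved statement that $A_\xi$ is at distance greater than $\varepsilon$ from the algebra generated by any finite set of its predecessors to conclude that the thinned family is irredundant. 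None of this appears in your proposal, and it cannot be bypassed: this forcing and homogeneity argument \emph{is} the theorem. To repair the proposal you would either have to supply this argument or point to an independent proof of Theorem \ref{theorem-cohen}, which the paper does not contain (the citation of Todorcevic's work there is only to an analogous argument for Boolean algebras, not to a quotable statement about operators).
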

\begin{proof}
To obtain the relative consistency we will use the method of forcing (see \cite{kunen}).
We start with the ground model $V$ satisfying the generalized continuum hypothesis (GCH) and
we will consider the generic extension $V[G]$ where $G$ is a generic set in the 
forcing $\PP=Fn(\omega_2, 2)$ for adding $\omega_2$ Cohen reals (see Chapter VIII \S2 of \cite{kunen}). 

Fix a ground model orthonormal basis $(e_n: n\in \N)$ for $\ell_2$ in $V$. In $V[G]$
let $(A_\xi: \xi<2^\omega)$
be as in the theorem.  By passing to a subset of cardinality $2^\omega=\omega_2$ 
and using the hypothesis that  $(A_\xi: \xi<2^\omega)$ is a collection of operators in $\B(\ell_2)$ which generates
a C*-algebra of density continuum, we may assume that $A_\xi$ does not belong to
the C*-algebra generated by the operators $(A_\eta: \eta<\xi)$ for each $\xi<\omega_1$.
Moreover by passing to a subsequence we may assume that there is a rational 
$\varepsilon>0$ such that
$\|A-A_\xi\|>\varepsilon$ for every $A$ in the C*-algebra generated by the operators $(A_\eta: \eta<\xi)$ for each $\xi<\omega_1$.

Each $A_\xi$ can be identified with an $\N\times \N$ complex valued matrix
$(\langle A_\xi(e_n), e_m\rangle)_{m, n\in \N}$. Let $\dot A_\xi$ be $\PP$-names in $V$
for these matrices. Using the standard argument of nice names, the countable
chain condition for $\PP$ and passing to a subsequence
using the $\Delta$-system lemma for countable sets which follows from the GCH, we may assume that
there are permutations $\sigma_{\xi, \eta}:\omega_2\rightarrow\omega_2$  which lift to
the automorphisms of $\PP$ and the permutations $\sigma_{\xi, \eta}'$ of $\PP$ names such that
$$\sigma_{\xi, \eta}'(\dot A_\eta)=\dot A_\xi,$$
and for every $\xi, \eta\in \omega_2$ we have that 
$$\PP\forces \phi(\dot x_1, ..., \dot x_k)\  \hbox{if and only if}\ \ 
\PP\forces \phi(\sigma_{\xi, \eta}'(\dot x_1), ..., \sigma_{\xi, \eta}'(\dot x_k))$$ for any formula
$\phi$ in $k\in \N$ free variables and any sequence
$\dot x_1, ..., \dot x_k$ of $\PP$-names for $k\in\N$ (7.13 \cite{kunen}). Using this for the formulas which say 
that the distance of $A_\xi$ from any element of
the C*-algebra generated by the operators $(A_\eta: \eta\in F)$ for any finite $F\subseteq \xi$
is bigger than $\varepsilon$,
we conclude  that $\PP$ forces that no $\dot A_\xi$  belongs the C*-algebra generated by 
any countable collection from $\{\dot A_\eta: \eta\not=\xi\}$ (by considering a permutation
of $\omega_2$ which moves $\xi$ above the countable set). This means that 
$\PP$ forces that no $\dot A_\xi$  belongs the C*-algebra generated by 
the remaining operators  $\{\dot A_\eta: \eta\not=\xi\}$, i.e., that the collection is irredundant as required.

\end{proof}

The above is a version of applying a standard  argument as in  \cite{stevo-irr} in the context of
Boolean irredundance.

\section{Commutators under OCA}

The main consistent construction of this paper presented in the following sections 
has a strong randomness properties. In this section we show that this randomness does not take place
for any uncountable collection of operators in $\B(\ell_2)$ under the assumption of
Open Coloring Axiom,  OCA.
We will follow the approach to the strong operator topology from the book \cite{davidson} 
of Davidson.
Thus we have:

\begin{definition}Let $H$ be a Hilbert space. The strong operator topology (SOT) on $\mathcal B(H)$ 
is defined as the weakest topology such that the sets 
$$S(a,x):=\{b\in B(H): ||(b-a)(x)||<1\}$$
are open for each $a\in \mathcal B(H)$ and $x \in H$ . We denote by $(\mathcal B(H),\tau_{sot})$
and $(\mathcal B(H)_1, \tau_{sot})$ respectively the space $\mathcal B(H)$
 and the unit ball of $\mathcal B(H)$ with the strong operator topology.
\end{definition}

\begin{proposition}\label{sep+metriz} If $H$ is a separable Hilbert space,
 then $(\mathcal B(H)_1, \tau_{sot})$ is metrizable and separable in the strong operator topology.
\end{proposition}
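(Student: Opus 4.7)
The plan is to fix an orthonormal basis $(e_n)_{n\in\N}$ of $H$ and use it both to manufacture a compatible metric and to build a countable SOT-dense subset of $\mathcal{B}(H)_1$.

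For metrizability, I would set
$$d(S,T) := \sum_{n=1}^{\infty} 2^{-n}\,\|(S-T)(e_n)\|,$$
a series which converges since each term is bounded by $2^{1-n}$ when $S,T\in \mathcal{B}(H)_1$. One inclusion of topologies is immediate: for each $n$ the map $T\mapsto \|(T-S)(e_n)\|$ is SOT-continuous, hence so is $d$. For the reverse inclusion, given $T\in \mathcal{B}(H)_1$, $x\in H$ and $\varepsilon>0$, I would approximate $x$ to within $\varepsilon/4$ by a finite linear combination of $e_1,\dots,e_N$; combining this with $\|S-T\|\leq 2$ on $\mathcal{B}(H)_1$ and smallness of $\|(S-T)(e_n)\|$ for $n\leq N$ (which a sufficiently small $d$-ball guarantees) yields $\|(S-T)(x)\|<\varepsilon$.

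For separability, I would let $\mathcal D$ be the collection of operators whose matrix $(\langle Te_j,e_i\rangle)_{i,j\in\N}$ in the basis $(e_n)$ is supported on some finite square $[1,N]\times[1,N]$, has entries in $\Q+i\Q$, and has norm at most one; this set is plainly countable. Given any $T\in \mathcal{B}(H)_1$, let $P_N$ denote the orthogonal projection onto $\mathrm{span}(e_1,\dots,e_N)$. Then $P_N T P_N\to T$ in SOT as $N\to\infty$, because $P_N y\to y$ for every $y\in H$. Each $P_N T P_N$ is a contraction on a finite-dimensional subspace and so can be norm-approximated (hence SOT-approximated) by a matrix with entries in $\Q+i\Q$; a harmless scaling by a factor slightly less than one ensures the approximant lies in the closed unit ball, hence in $\mathcal D$.

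The construction is quite routine; the one point requiring modest care is that the metrizability argument genuinely uses the uniform norm bound $\|S-T\|\leq 2$ on the unit ball, which is exactly why the analogous statement fails on all of $\mathcal{B}(H)$.
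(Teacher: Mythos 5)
Your proposal is correct and follows essentially the same route as the paper: the paper delegates metrizability to Davidson's Proposition I.6.3 (whose standard proof is exactly your weighted-sum metric built from an orthonormal basis), and for separability it likewise uses finite-rank operators with finitely many rational matrix entries, which your set $\mathcal D$ together with the $P_NTP_N\to T$ argument makes explicit. Your closing remark about the uniform bound $\|S-T\|\leq 2$ being essential on the unit ball is a correct and worthwhile observation.
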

\begin{proof}
For metrizability see \cite{davidson}, Proposition I.6.3.
For the separability
 fix some orthonormal basis $(e_n)_{n\in \N}$ and consider finite
rank operators in the linear span of one dimensional operators of the form $v\otimes w$ where 
$v, w$ have finitely many nonzero rational coordinates with respect to $(e_n)_{n\in \N}$.
It is clear that such operators are SOT dense in $\mathcal B(l_2)_1$, as required.
\end{proof}

By the remarks on page 16 and 17 of \cite{davidson} we have the following:

\begin{lemma}\label{continuous}The multiplication on
$\mathcal B(H)_1$  is jointly continuous in the SOT topology and
so every polynomial\footnote{By a polynomial $P(x,y)$ 
we mean a expression in the form $P(x,y)=\sum_i a_i x^i + \sum_i b_i y^i +\sum_{i,j}c_{i,j}x^iy^j + 
\sum_{i,j}d_{i,j}y^ix^j +e_0$.}  is
SOT continuous on $\mathcal B(H)_1$.
\end{lemma}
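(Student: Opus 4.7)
The plan is to prove the two assertions in sequence: first that multiplication
$\mu : \mathcal{B}(H)_1 \times \mathcal{B}(H)_1 \to \mathcal{B}(H)$ is jointly SOT continuous, and then that this plus the (easy) SOT continuity of addition and scalar multiplication gives SOT continuity of any polynomial on $\mathcal{B}(H)_1$.

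For joint continuity at a point $(a,b) \in \mathcal{B}(H)_1 \times \mathcal{B}(H)_1$, I would fix an arbitrary $x \in H$ and estimate
\[
\|a' b' x - ab x\| \;\leq\; \|a'(b' x - bx)\| + \|(a'-a)(bx)\| \;\leq\; \|b' x - b x\| + \|(a'-a)(bx)\|,
\]
where the last step uses $\|a'\| \leq 1$, which is exactly the hypothesis we have because we are on the unit ball. For $(a',b')$ SOT-close to $(a,b)$, the second summand is small by definition (applied to the fixed vector $bx$), and the first summand is small because $b'$ is SOT-close to $b$ (applied to the fixed vector $x$). This shows the preimage of every subbasic SOT-open set $S(ab, x)$ contains a product of two basic SOT-open sets around $(a,b)$, so $\mu$ is jointly continuous. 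Crucially, boundedness by $1$ is what lets us absorb the $\|a'\|$ factor; without the restriction to the unit ball, multiplication is only separately, not jointly, SOT continuous, and this is where the hypothesis that we work on $\mathcal{B}(H)_1$ is used.

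For the polynomial assertion, note that the unit ball is closed under multiplication (since $\|ab\| \leq \|a\|\|b\| \leq 1$) and under scalar multiplication by scalars of modulus at most $1$, so any monomial $x^{i_1} y^{j_1} x^{i_2} y^{j_2}\cdots$ is obtained by iterated application of $\mu$ and stays in $\mathcal{B}(H)_1$ throughout the iteration, remaining SOT continuous by the first step. Addition is trivially SOT continuous because SOT is a vector topology on $\mathcal{B}(H)$, and scalar multiplication by a fixed scalar is likewise continuous. Assembling a general polynomial $P(x,y)$ as a finite linear combination of such monomials then yields SOT continuity of $P$ on $\mathcal{B}(H)_1$, taking values in $\mathcal{B}(H)$ equipped with the SOT.

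The only real obstacle is the first step, and it is handled by the uniform bound on the unit ball as shown above; everything else is a bookkeeping composition of continuous maps.
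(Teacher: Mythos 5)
Your proof is correct. The paper itself gives no argument here, simply citing the remarks on pages 16--17 of Davidson's book, and your estimate $\|a'b'x-abx\|\leq\|a'\|\,\|b'x-bx\|+\|(a'-a)(bx)\|$ together with the uniform bound $\|a'\|\leq 1$ on the unit ball is exactly the standard argument behind that citation, with the polynomial case following by composition as you describe.
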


We will follow the approach to the 
 Open Coloring Axiom (OCA) from \cite{aq},  page 55. Its weaker version was discovered by Abraham, 
Rubin and Shelah (\cite{abraham-oca}) and the final form 
 was introduced by Todorcevic in \cite{stevo-partition}. It is consistent with ZFC. 
In fact, it is a consequence of the Proper Forcing Axiom (PFA). 
See Theorem 8 of \cite{stevo-partition}.
Recall that 
$$[X]^2=\{\{x, y\}\subseteq X: x\not=y\}.$$
 It is well known that the original
form of OCA from \cite{stevo-partition} for subsets of the reals  is equivalent to the version for 
separable metric spaces as in \cite{aq}:

\begin{definition}[Todorcevic \cite{stevo-partition}]\label{oca} OCA denotes de following statement:
 If $X$ is a separable metric Hausdorff space and  $[X]^2 = K_0\cup K_1$ is a  partition
 with $K_0$ open\footnote{We call $K_0\subseteq [X]^2$ open if the symmetric set
$\{(x, y)\in X\times X: \{x, y\}\in K_0\}$
is open in $K\times K\setminus \Delta$ in the product topology, where $\Delta$ denotes the
 diagonal of $X\times X$.}, then either there is an uncountable $Y\subseteq X$ 
such that $[Y]^2\subseteq K_0$, or else $X=\bigcup_{n\in \N}X_n$ where $[X_n]^2\subseteq K_1$ for each $n\in \N$.
\end{definition}

\begin{theorem}[OCA]\label{maintheorem} Let $(A_\alpha)_{\alpha<\omega_1}$ 
be an uncountable family in $\mathcal{B}(l_2)$ and $P(x,y)$ be a polynomial satisfying
$\|P(A, B)\|=\|P(B, A)\|$ for all $A, B\in \mathcal{B}(l_2)$. Then given $\varepsilon>0$, 
either there is an uncountable $\Gamma_0\subset \omega_1$ such that $||P(A_\alpha, A_\beta)||\leq \varepsilon$ for 
every distinct $\alpha, \beta \in \Gamma_0$ or else there is an uncountable $\Gamma_1\subset \omega_1$
 such that $||P(A_\alpha, A_\beta)||>\varepsilon$ for every distinct  $\alpha, \beta \in \Gamma_1$.
\end{theorem}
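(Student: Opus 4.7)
The plan is to frame the dichotomy as an instance of OCA applied to an appropriate open coloring of pairs from the set $X=\{A_\alpha:\alpha<\omega_1\}$, viewed as a subspace of $\mathcal{B}(\ell_2)$ equipped with the strong operator topology. First I would reduce to the case in which the family is uniformly norm-bounded: since $\omega_1$ is uncountable and $\mathcal{B}(\ell_2)$ is the union of the operator-norm balls of integer radius, by pigeonhole there is some $r\in\mathbb{N}$ such that uncountably many $A_\alpha$ satisfy $\|A_\alpha\|\leq r$, and I may restrict attention to these. By Proposition \ref{sep+metriz} (applied after rescaling by $r$), the ball of radius $r$ with the SOT is separable and metrizable. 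If uncountably many indices share the same operator $A$, they already furnish one of the desired homogeneous sets according to whether $\|P(A,A)\|>\varepsilon$ or not; otherwise I may assume the $A_\alpha$ are pairwise distinct, so that $X$ is a separable metrizable Hausdorff space of size $\omega_1$.

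Next I would define the partition $[X]^2=K_0\cup K_1$ by placing $\{A_\alpha,A_\beta\}\in K_0$ precisely when $\|P(A_\alpha,A_\beta)\|>\varepsilon$. The symmetry hypothesis $\|P(A,B)\|=\|P(B,A)\|$ is exactly what makes this assignment insensitive to the order of the pair, so $K_0$ is well-defined on unordered pairs. The crucial technical step is to verify that $K_0$ is open in the sense of Definition \ref{oca}. For this I would combine two facts: Lemma \ref{continuous} gives joint SOT-continuity of the polynomial $P$ on bounded sets, and the operator norm is SOT-lower semicontinuous, because for each unit vector $v$ the functional $T\mapsto\|Tv\|$ is SOT-continuous and the norm is their supremum. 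Composing these, the map $(A,B)\mapsto\|P(A,B)\|$ is SOT-lower semicontinuous on the bounded set $(r\cdot\mathcal{B}(\ell_2)_1)\times(r\cdot\mathcal{B}(\ell_2)_1)$, so the symmetric preimage of $(\varepsilon,\infty)$ is SOT-open; restricting to $(X\times X)\setminus\Delta$ shows that $K_0$ meets the openness requirement of OCA.

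Finally I would apply OCA directly. Either there is an uncountable $Y\subseteq X$ with $[Y]^2\subseteq K_0$, in which case $\Gamma_1=\{\alpha:A_\alpha\in Y\}$ is uncountable and satisfies $\|P(A_\alpha,A_\beta)\|>\varepsilon$ for all distinct $\alpha,\beta\in\Gamma_1$; or else $X=\bigcup_{n\in\mathbb{N}}X_n$ with $[X_n]^2\subseteq K_1$ for each $n$, in which case some $X_n$ must be uncountable by the uncountability of $X$, and $\Gamma_0=\{\alpha:A_\alpha\in X_n\}$ then satisfies $\|P(A_\alpha,A_\beta)\|\leq\varepsilon$ for all distinct $\alpha,\beta\in\Gamma_0$.

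The step I expect to be the main obstacle is the verification of SOT-openness of $K_0$: one must resist the temptation to invoke norm-continuity (which fails for SOT) and instead exploit \emph{lower} semicontinuity of the norm together with joint SOT-continuity of $P$ on bounded sets; the restriction to a fixed bounded ball is essential since joint SOT-continuity of multiplication fails on all of $\mathcal{B}(\ell_2)$.
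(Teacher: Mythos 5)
Your proposal is correct and follows essentially the same route as the paper: restrict to a norm-bounded subfamily, view it inside the SOT-separable metrizable ball of Proposition \ref{sep+metriz}, color pairs by whether $\|P(A,B)\|>\varepsilon$, verify openness via joint SOT-continuity of $P$ on bounded sets (Lemma \ref{continuous}) together with the fact that $\{T:\|T\|>\varepsilon\}$ is SOT-open, and apply OCA. Your phrasing of the openness step via lower semicontinuity of the norm is just a repackaging of the paper's explicit choice of a witness vector $x$ and margin $\delta$ with $\|P(A,B)(x)\|>\varepsilon+\delta$, and your extra care about repeated operators is a minor refinement the paper elides.
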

\begin{proof}
As $X$ is uncountable, by passing
to an uncountable subset, we may assume that there is $M>0$ such that $\|A_\alpha\|\leq M$
for all $\alpha<\omega_1$.
 Let $X=\{A_\alpha: \alpha<\omega_1\}\subseteq M\B(\ell_2)_1$ and note that $M\B(\ell_2)_1$ is
 metric and separable by Proposition \ref{sep+metriz}.
Define 
$$K_0 = \{\{A,B\}\in [X]^2: ||P(A, B)||> \varepsilon\}$$
and $K_1=[X]^2\setminus K_0$. 

First note that the separability is hereditary for metric spaces, so $X$ is metric separable
as a subspace of $(M\mathcal B(l_2)_1, \tau_{sot})$.

Now note that $K_0$ is open. Indeed if $\|P(A, B)\|> \varepsilon$, then there
is $x\in \ell_2$ of norm one and $\delta>0$  such that $\|P(A, B)(x)\|>\varepsilon+\delta$. 
 Now if $P(A', B')\in S(P(A, B),x/\delta)$, we have $\|P(A', B')(x)-P(A, B)(x)\|<\delta$ and so
 $\|P(A', B')\|>\varepsilon$; Hence 
$\{\{A', B'\}\in [X]^2:  P(A', B')\in S(P(A, B),x/\delta)\}\subseteq K_0$.
But $(A, B)\in P^{-1}[S(P(A, B),x/\delta)]$ is open in $X\times X$ with
the product SOT topology by the continuity of $P$ 
(Lemma \ref{continuous}).

So we are in the position of applying the OCA. 
From  \ref{oca} we obtain the required  uncountable set $\Gamma_0$ or $\Gamma_1$.

\end{proof}

\begin{corollary}[OCA]\label{cor-oca}Let $(A_\alpha)_{\alpha<\omega_1}$ be an uncountable family in $B(l_2)$.
 Then given $\varepsilon>0$, either there is an uncountable 
$\Gamma_0\subset \omega_1$ such that $||[A_\alpha, A_\beta]||\leq \varepsilon$ for
 every $\alpha, \beta \in \Gamma_0$ or else there is an uncountable 
$\Gamma_1\subset \omega_1$ such that $||[A_\alpha, A_\beta]||>\varepsilon$ for every $\alpha, \beta \in \Gamma_1$.
\end{corollary}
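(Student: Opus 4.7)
The plan is to observe that Corollary \ref{cor-oca} is a direct specialization of Theorem \ref{maintheorem} to the commutator polynomial $P(x,y) = xy - yx$, which is a polynomial in the sense of the footnote defining polynomials in two variables on $\B(\ell_2)$. So the first step is simply to write $[A,B] = P(A,B)$ for this $P$.

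The second step is to verify the symmetry hypothesis $\|P(A,B)\| = \|P(B,A)\|$ required by Theorem \ref{maintheorem}. This is immediate: $[A,B] = AB - BA = -(BA-AB) = -[B,A]$, so $\|[A,B]\| = \|-[B,A]\| = \|[B,A]\|$. Thus $P$ satisfies the hypothesis.

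With these two trivial observations, I would invoke Theorem \ref{maintheorem} with this $P$ and the given $\varepsilon$ to the family $(A_\alpha)_{\alpha<\omega_1}$. The theorem's conclusion then directly delivers either an uncountable $\Gamma_0\subseteq\omega_1$ with $\|[A_\alpha,A_\beta]\|\leq\varepsilon$ for all distinct $\alpha,\beta\in\Gamma_0$, or an uncountable $\Gamma_1\subseteq\omega_1$ with $\|[A_\alpha,A_\beta]\|>\varepsilon$ for all distinct $\alpha,\beta\in\Gamma_1$, which is exactly the statement of the corollary. There is no genuine obstacle here; all the real work (verifying that $K_0$ is open in the SOT, checking SOT-metrizability and separability of bounded balls of $\B(\ell_2)$, and invoking OCA) has already been done inside the proof of Theorem \ref{maintheorem}.
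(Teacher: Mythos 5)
Your proposal is correct and matches the paper's proof exactly: the paper also derives the corollary by setting $P(x,y)=xy-yx$ and invoking Theorem \ref{maintheorem}. Your explicit check of the symmetry hypothesis $\|P(A,B)\|=\|P(B,A)\|$ via $[A,B]=-[B,A]$ is a detail the paper leaves implicit, but it is the right (and only) thing to verify.
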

\begin{proof}
Consider $P(x,y)=xy-yx$ and apply Theorem \ref{maintheorem}.
\end{proof}

\begin{remark}
Let us remark on two trivial versions of the above results. 
First  let $(A_n)_{n\in \N}$ be an infinite family in $B(l_2)$. 
Then given $\varepsilon>0$, either there is an infinite $\Gamma_0\subset \N$
 such that $||[A_n, A_m]||\leq \varepsilon$ for every $n, m \in \Gamma_0$
 or else there is an infinite $\Gamma_1\subset \N$ such that $||[A_n, A_m]||>\varepsilon$
 for every $n, m \in \Gamma_1$. This follows from the Ramsey theorem whose consistent generalization is the OCA.

Secondly note that if $(A_\alpha)_{\alpha<\omega_1}$ is an uncountable family
in a separable C*-subalgebra of $B(\ell_2)$, then by its second countability in the norm topology
 it follows that for every $\delta>0$ there is an uncountable $\Gamma_0\subseteq \omega_1$
 such that $\|A_\alpha-A_\beta\|<\delta$ for every $\alpha, \beta\in \Gamma_0$ and 
 so given any polynomial $P$ satisfying
 $P(x, x)=0$ and $\varepsilon>0$, by the norm continuity of $P$ there is an uncountable
  $\Gamma_0\subseteq \omega_1$
 such that $\|P(A_\alpha,P_\beta)\|<\varepsilon$ for every $\alpha, \beta\in \Gamma_0$.
 
 In fact, in the nontrivial cases of Theorem \ref{maintheorem} and Corollary \ref{cor-oca}
 when $(A_\alpha)_{\alpha<\omega_1}$ generates a nonseparable C*-subalgebra of
 $\B(\ell_2)$ we may assume that $(A_\alpha)_{\alpha<\omega_1}$ forms a norm discrete set.
\end{remark}

\section{The partial order of finite dimensional approximations}

\subsection{Notation}

The C*-algebras that we consider in the rest of this paper are 
 subalgebras of $\B(\ell_2( \omega_1\times \N))$. In fact, the subspaces $\ell_2(\{\xi\}\times \N)$
 of $\ell_2(\omega_1\times \N)$,
which we call columns will be
invariant for all our algebras, so our algebras could be  identified with subalgebras of 
$\Pi_{\xi<\omega_1}\B(\ell_2(\{\xi\}\times\N))$. Also the map
$\pi_\alpha: \Pi_{\alpha\leq \xi<\omega_1}\B(\ell_2(\{\xi\}\times\N))
\rightarrow \B(\ell_2(\{\alpha\}\times\N)$ applied to the appropriate quotients, will
be faithful (see Lemma \ref{quotientembedding} (3)). Thus the purpose of this presentation
of the algebras is related to the transparent structure of the Cantor-Bendixson composition series
(see Proposition \ref{thintallfully} (3)).

 For $X\subseteq  \omega_1\times \N$, we introduce the following
notation:
\begin{itemize}



\item $(e_{\xi, n}: \xi<\omega_1, n\in \N)$ is the canonical orthonormal 
basis of $\ell_2(\omega_1\times \N)$,
\item   the family of all operators $A$ in
$\B(\ell_2(\omega_1\times \N))$ such that
\begin{itemize}
\item  $\ell_2(X\cap (\{\xi\}\times \N))$ is $A$-invariant for all $\xi<\omega_1$,
\item  $A(e_{\xi, n})=0$ whenever $(\xi, n)\not \in X$,
\end{itemize}
will be denoted by $\B_{X}$,
\item  the unit of the C*-algebra $\B_X$ 
will be denoted by $P_X$, 
\item $1_{\xi, m, n}$ is the operator in $\B_{\omega_1\times \N}$ satisfying
$$
    1_{\xi, m, n}(e_{\eta, k}) =
    \begin{cases}
      e_{\xi, m} &  \text{if} \ k=n, \xi=\eta \\
      0        & \text{otherwise,}  \\
    \end{cases}
$$
\item if $A\in \B_{\omega_1\times\N}$  we define
$A|X=AP_X$, 
\item  if $A\in \B_{\omega_1\times\N}$  and $a\subseteq \omega_1$ we define
$A|a$   as $A|(a\times \N)$,
\item $\A|X=\{A|X: A\in \A\}$ for $\A\subseteq \B_{\omega_1\times\N}$ and $X\subseteq \omega_1\times \N$.
\end{itemize}

\subsection{The definition of the partial order of finite-dimensional approximations}

\begin{definition}\label{def-P}
We define a partial order $\PP$ consisting of
elements 
$$p=\big(a_p, \{n^p_\xi: \xi\in a_p\}, \{A^p_{\xi, m, n}: \xi\in a_p, n, m\in [0,n_\xi^p)\}\big),$$
where
\begin{enumerate}
\item $a_p$  is a finite subset of  $\omega_1$,
\item $n_\xi^p\in \N$  for each $\xi\in a^p$,
\item $A^p_{\xi, m, n}\in \B_{X_p}$ for each $\xi\in a_p$ and $n, m\in [0, n^p_\xi)$, where
$$X_p=\{(\xi, n): \xi \in a_p;\  n\in [0, n^p_\xi)\},$$
\item $A^p_{\xi, m, n}= (A^p_{\xi, m, n}| \xi)+1_{\xi, m, n}$ for each $\xi\in a_p$ and $n, m\in [0,n_\xi^p)$.
\end{enumerate}

The order $\leq_\PP=\leq$ on $\PP$  is defined by declaring $p\leq q$ if and only if:
\begin{enumerate}[(a)]
\item $a_p\supseteq a_q$,
\item $n_\xi^p\geq n_\xi^q$ for $\xi\in a_q$,

\item there is a (nonunital) *-embedding $i_{pq}: \B_{X^q}\to \B_{X^p}$ such
that $i_{pq}(A^q_{\xi, m, n})=A^p_{\xi, m,  n}$ for  all $\xi\in a_q$ and $m, n\in [0,n^q_\xi)$,
\item $i_{p, q}(A)|X_q=A$
for all $A\in \B_{X^q}$.
\end{enumerate}
\end{definition}

\begin{definition}\label{def-A-p} Suppose that $p\in \PP$ and $X\subseteq X_p$.
Then the $C^*$-subalgebra of $\B_{X_p}$ 
generated by $\{A^p_{\xi, m, n}: (\xi, m), (\xi, n)\in X\}$ is denoted by $\A^p_X$.
\end{definition}

\begin{lemma}\label{generating-B}For every 
$\alpha\in \omega_1$ and every $p\in \PP$ we have
$$\A_{X_p\cap (\alpha\times\N) }^p=\B_{X_p\cap (\alpha\times\N) }.$$
In particular $\A_{X_p}^p=\B_{X_p}$.

\end{lemma}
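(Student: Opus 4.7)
The plan is to proceed by induction on $|a_p\cap\alpha|$, exploiting the ``upper-triangular'' decomposition coded in clause (4) of Definition \ref{def-P}. The inclusion $\A^p_{X_p\cap(\alpha\times\N)}\subseteq \B_{X_p\cap(\alpha\times\N)}$ is immediate: for every generator $A^p_{\xi, m, n}$ with $\xi\in a_p\cap\alpha$, clause (4) writes it as $(A^p_{\xi, m, n}|\xi)+1_{\xi, m, n}$, where the first summand lies in $\B_{X_p\cap(\xi\times\N)}$ and the second, a matrix unit on the $\xi$-th column, lies in $\B_{X_p\cap(\{\xi\}\times\N)}$; both sit inside $\B_{X_p\cap(\alpha\times\N)}$.

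For the reverse inclusion, if $a_p\cap\alpha=\emptyset$ both sides are trivial. Otherwise let $\xi^{*}=\max(a_p\cap\alpha)$ and assume by induction that $\A^p_{X_p\cap(\xi^{*}\times\N)}=\B_{X_p\cap(\xi^{*}\times\N)}$. Each term $A^p_{\xi^{*}, m, n}|\xi^{*}$ lies in $\B_{X_p\cap(\xi^{*}\times\N)}$, hence by the inductive hypothesis it already lies in $\A^p_{X_p\cap(\alpha\times\N)}$. Consequently
$$1_{\xi^{*}, m, n}=A^p_{\xi^{*}, m, n}-(A^p_{\xi^{*}, m, n}|\xi^{*})\in \A^p_{X_p\cap(\alpha\times\N)}$$
for every $m, n\in[0, n^p_{\xi^{*}})$. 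These matrix units generate the full matrix algebra on the $\xi^{*}$-th column, and together with $\B_{X_p\cap(\xi^{*}\times\N)}$ (available by induction) they generate the block-diagonal algebra $\B_{X_p\cap(\alpha\times\N)}=\B_{X_p\cap(\xi^{*}\times\N)}\oplus \B_{X_p\cap(\{\xi^{*}\}\times\N)}$, as required.

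The final statement $\A^p_{X_p}=\B_{X_p}$ follows by choosing any $\alpha$ strictly above $\max a_p$. There is no genuine obstacle here: the argument amounts to peeling off columns one at a time from the bottom up, converting each ``triangular'' generator $A^p_{\xi, m, n}$ into a pure matrix unit $1_{\xi, m, n}$ by subtracting its lower part, which clause (4) of Definition \ref{def-P} has arranged to live in the subalgebra already produced on strictly lower columns.
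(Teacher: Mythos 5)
Your proof is correct and follows essentially the same route as the paper's: induction on $|a_p\cap\alpha|$, peeling off the top column $\xi^{*}=\max(a_p\cap\alpha)$, using the direct sum decomposition $\B_{X_p\cap(\alpha\times\N)}\cong\B_{X_p\cap(\xi^{*}\times\N)}\oplus\B_{\{\xi^{*}\}\times[0,n^p_{\xi^{*}})}$ and clause (4) of the definition of $\PP$ to recover the matrix units $1_{\xi^{*},m,n}$ from the generators. The only addition is your explicit remark on the easy inclusion $\subseteq$, which the paper leaves implicit.
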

\begin{proof}
We will prove it by induction on $|a_p\cap\alpha|$. 
If $a_p\cap\alpha=\emptyset$, then both of the algebras are $\{0\}$.
Suppose $|a_p\cap\alpha|=n+1$ and we have proved the Lemma for 
every $q \in \PP$ and $\alpha<\omega_1$ such that $|a_q\cap\alpha|=n$. 
Let $\xi = max (a_p\cap\alpha)$. By the definition of $\B_{X_p}$
 we have that
 $\B_{X_p\cap (\alpha\times\N)}$ is *-isomorphic to $\B_{X_p \cap (\xi\times \mathbb N)}
  \oplus \B_{\{\xi\}\times \mathbb N}$. 
By the inductive hypothesis, $\B_{X_p \cap (\xi\times \mathbb N)}$ is 
generated by $\{A_{\eta,m,n}^p: \eta \in a_p\cap \xi; m,n\in [0,n^p_\xi)\}$. 
But by (4) in Definition \ref{def-P}, we have that 
$1_{\xi,m,n}=A^p_{\xi,m,n}-A$ for some $A\in \B_{X_p \cap (\xi\times \mathbb N)}$
and all $m,n\in [0, n_\xi^p)$.
 In particular, $\B_{\{\xi\}\times \mathbb [0, n_\xi^p)}$   is included in the algebra generated by 
$\{A_{\eta,m,n}^p: \eta \in a_p\cap\alpha; m,n\in [0,n^p_\eta)\}$.
 This together with the inductive hypothesis completes the proof.
\end{proof}

\begin{lemma}\label{norms-of-restrictions}
 Suppose that $\alpha<\omega_1$ and $p, q\in \PP$ satisfy $p\leq q$ and $A=i_{p, q}(B)$,
where $B\in \A^q_{X_q}$. Then 
$$\|A|[\alpha, \omega_1)\|=\|B|[\alpha, \omega_1)\|.$$
\end{lemma}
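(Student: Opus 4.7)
The plan is to show the equality by two separate inequalities, with the nontrivial direction coming from the fact that $i_{p,q}$ respects the column filtration, which lets us push the restriction map through $i_{p,q}$ as a quotient of a $*$-homomorphism.

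First I would handle the easy direction $\|B|[\alpha,\omega_1)\|\leq\|A|[\alpha,\omega_1)\|$ directly from condition (d) of Definition~\ref{def-P}: since $i_{p,q}(B)\,P_{X_q}=B$, we have
\[
B|[\alpha,\omega_1)\;=\;A\,P_{X_q}\,P_{[\alpha,\omega_1)\times\N}\;=\;A\,P_{X_q\cap([\alpha,\omega_1)\times\N)}.
\]
Because $X_q\cap([\alpha,\omega_1)\times\N)\subseteq X_p\cap([\alpha,\omega_1)\times\N)$ and $A\in\B_{X_p}$, dominating the inner projection by $P_{[\alpha,\omega_1)\times\N}$ yields $\|B|[\alpha,\omega_1)\|\leq\|A\,P_{[\alpha,\omega_1)\times\N}\|=\|A|[\alpha,\omega_1)\|$.

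For the reverse, introduce the ideals
\[
I_q=\{C\in\B_{X_q}: C|[\alpha,\omega_1)=0\},\qquad I_p=\{C\in\B_{X_p}: C|[\alpha,\omega_1)=0\},
\]
which are direct summands consisting of the column blocks indexed by $\xi<\alpha$, so that $\B_{X_q}/I_q\cong\B_{X_q\cap([\alpha,\omega_1)\times\N)}$ via the restriction $C\mapsto C|[\alpha,\omega_1)$, and similarly for $p$. The heart of the argument is the claim that $i_{p,q}(I_q)\subseteq I_p$. Once this holds, $i_{p,q}$ descends to a well-defined $*$-homomorphism between the two quotients which, under the above identifications, sends $B|[\alpha,\omega_1)$ to $A|[\alpha,\omega_1)$. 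Since every $*$-homomorphism is norm-decreasing, we obtain $\|A|[\alpha,\omega_1)\|\leq\|B|[\alpha,\omega_1)\|$, completing the proof.

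To verify the claim, it is enough (since $I_q$ is linearly spanned by the matrix units $1_{\xi,m,n}$ with $\xi\in a_q\cap\alpha$) to check that $i_{p,q}(1_{\xi,m,n})\in I_p$ for every such $\xi,m,n$. By Lemma~\ref{generating-B} applied within $\B_{X_q\cap((\xi+1)\times\N)}$, the element $1_{\xi,m,n}$ is a noncommutative polynomial in the generators $\{A^q_{\eta,m',n'}:\eta\in a_q\cap(\xi+1)\}$; applying $i_{p,q}$, the image $i_{p,q}(1_{\xi,m,n})$ is the same polynomial in $\{A^p_{\eta,m',n'}:\eta\in a_q\cap(\xi+1)\}$. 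Each of these generators lies in $\B_{X_p\cap((\eta+1)\times\N)}$ by condition~(4) of Definition~\ref{def-P} (which forces $A^p_{\eta,m',n'}$ to be supported in columns $\leq\eta$), and since $\eta\leq\xi<\alpha$ they all lie in $\B_{X_p\cap(\alpha\times\N)}\subseteq I_p$.

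The main point of difficulty is precisely this claim: showing that the embedding $i_{p,q}$ is compatible with the column filtration of $\omega_1$. Everything else is bookkeeping, but this step uses in an essential way both the triangular structure $A^p_{\xi,m,n}=(A^p_{\xi,m,n}|\xi)+1_{\xi,m,n}$ built into condition~(4) and the fact (Lemma~\ref{generating-B}) that the matrix units are already in the subalgebra generated by the $A^p_{\eta,m',n'}$ with $\eta\leq\xi$, so that writing $1_{\xi,m,n}$ as a polynomial and applying $i_{p,q}$ does not introduce mass in columns above $\xi$.
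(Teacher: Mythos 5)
Your proof is correct and takes essentially the same route as the paper's: the easy inequality comes from condition (d) of Definition \ref{def-P} in both cases, and the hard inequality rests in both cases on the single key fact, obtained from Lemma \ref{generating-B} together with condition (4), that $i_{p,q}$ carries operators supported on columns below $\alpha$ to operators supported on columns below $\alpha$. The paper expresses this via the decomposition $B=B|\alpha+B|[\alpha,\omega_1)$ and the isometry of $i_{p,q}$, while you repackage the identical content as an induced contractive $*$-homomorphism between the quotients by the column ideals, with your matrix-unit verification being an explicit spelling-out of the paper's remark that ``this generation must be preserved by $i_{p,q}$.''
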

\begin{proof} Since $B|\alpha$ and $B|[\alpha, \omega_1)$ are in $A^q_{X_q}$ by Lemma \ref{generating-B},
we have $$\|A|[\alpha, \omega_1)\|=\|i_{p, q}(B)|[\alpha, \omega_1)\|=
\|i_{p, q}(B|\alpha)|[\alpha, \omega_1)+i_{p, q}(B|[\alpha, \omega_1))|[\alpha, \omega_1)\|.$$
But $B|\alpha\in A^q_{X_q\cap(\alpha\times\N)}$ by Lemma \ref{generating-B}, and this
generation
must be preserved by the isomorphism $i_{p, q}$, i.e., $i_{p, q}(B|\alpha)|[\alpha, \omega_1)=0$
and so 
$$\|A|[\alpha, \omega_1)\|=\|i_{p, q}(B|[\alpha, \omega_1)|[\alpha, \omega_1)\|\leq
\|i_{p, q}(B|[\alpha, \omega_1)\|.$$
Since $i_{p, q}$ is an embedding (in particular an isometry) , we conclude that
$$\|A|[\alpha, \omega_1)\|\leq\|B|[\alpha, \omega_1)\|.$$
The other inequality follows from Definition \ref{def-P} (c-d).
\end{proof}


\subsection{Density Lemmas}

In the terminology related to partial orders occurring in the theory of forcing
 a subset $\DD$ of a partial order $\QQ$ is said to be dense if for every $p\in \PP$
there is $d\in \DD$ satisfying $d\leq p$.  In what follows we usually need stronger
information for $\QQ=\PP$ namely that $a_d=a_p$.

\begin{lemma}\label{density1} Suppose that $\xi<\omega_1$.
Then 
$$\DD_\xi=\{p\in \PP: \xi\in a_p\}$$
is a dense subset of $\PP$.

\end{lemma}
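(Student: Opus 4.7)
The plan is to split into two cases: if $\xi \in a_p$, then $p$ itself is in $\DD_\xi$ and we are done; otherwise, we build an extension $q \leq p$ by ``slotting in'' a single new column at $\xi$ with minimal content, keeping all previously given data untouched.

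Concretely, for the nontrivial case $\xi \notin a_p$, I would define $q$ by setting $a_q = a_p \cup \{\xi\}$, $n_\eta^q = n_\eta^p$ for $\eta \in a_p$, and $n_\xi^q = 1$, so that $X_q = X_p \cup \{(\xi, 0)\}$. I would keep $A^q_{\eta,m,n} = A^p_{\eta,m,n}$ for every $\eta \in a_p$ and $m,n < n_\eta^p$ (viewing these operators as elements of $\B_{X_q}$, which is legitimate because for $X \subseteq Y$ we have $\B_X \subseteq \B_Y$ as subalgebras of $\B(\ell_2(\omega_1 \times \N))$), and I would put $A^q_{\xi,0,0} = 1_{\xi,0,0}$. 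The embedding $i_{q,p} \colon \B_{X_p} \to \B_{X_q}$ is then simply the natural inclusion.

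The routine verifications are: (i) $A^q_{\xi,0,0} = 1_{\xi,0,0}$ lies in $\B_{X_q}$, and satisfies condition (4) of Definition~\ref{def-P}, because $1_{\xi,0,0} P_{\xi \times \N} = 0$ (the operator $1_{\xi,0,0}$ kills everything in $\ell_2(\xi \times \N)$), so $(A^q_{\xi,0,0}|\xi) + 1_{\xi,0,0} = 0 + 1_{\xi,0,0} = A^q_{\xi,0,0}$; (ii) for $\eta \in a_p$, the equation $A^q_{\eta,m,n} = (A^q_{\eta,m,n}|\eta) + 1_{\eta,m,n}$ holds because the operator and the projection $P_{\eta \times \N}$ are the same object whether we work in $\B_{X_p}$ or $\B_{X_q}$; (iii) the inclusion $i_{q,p}$ is an injective $*$-homomorphism with $i_{q,p}(A)|X_p = A P_{X_p} = A$ for all $A \in \B_{X_p}$, since $A \in \B_{X_p}$ already vanishes outside $X_p$.

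There is no real obstacle here; the only thing one has to be a bit careful about is that the embedding in clause~(c) of the order need not be unital (as the unit $P_{X_p}$ of $\B_{X_p}$ is strictly smaller than the unit $P_{X_q}$ of $\B_{X_q}$), which is consistent with Definition~\ref{def-P}(c) and with the notion of embedding fixed in the introduction. With these checks in hand, $q \in \PP$, $q \leq p$, and $\xi \in a_q$, so $q \in \DD_\xi$ and density follows.
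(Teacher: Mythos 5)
Your construction is exactly the paper's: in the nontrivial case you adjoin the new index $\xi$ with $n_\xi=1$, set $A_{\xi,0,0}=1_{\xi,0,0}$, keep all other data unchanged, and take the identity inclusion $\B_{X_p}\hookrightarrow\B_{X_q}$ as the witnessing embedding. The extra verifications you spell out (condition (4) of Definition~\ref{def-P} for the new generator and clauses (c)--(d) of the order) are correct and only make explicit what the paper declares ``clear.''
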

\begin{proof}

 Let $q \in \PP$ be such that $\xi \notin a_q$. Define $p$ as follows:
\begin{itemize}
\item $a_p=a_q\cup \{\xi\}$,
\item $n_\eta^p = n_\eta^q$ for $\eta \in a_q$ and $n_\xi^p =1$,
\item $A_{\eta,m,n}^p= A_{\eta,m,n}^q$ for $\eta \in a_q$ and $A_{\xi,0,0}^p=1_{\xi,0,0}$.
\end{itemize}
It is clear  that $p\in \PP$.  Also $p\leq q$ as 
$Id_{\B_{X_q}}: \B_{X_q}\to \B_{X_p}$ is a *-embedding good for
$i_{p, q}$ in Definition \ref{def-P} (c).
\end{proof}

\begin{lemma}\label{density1.5}
Suppose that $\xi<\omega_1$;  $k\in \N$ and $q\in \PP$ is such that $\xi \in a_q$. Then there is
$$p\in \EE_{\xi, k}=\{p\in \PP: \xi\in a_p, n^p_\xi\geq k\}$$
such that $p\leq q$ and $a_p=a_q$.
\end{lemma}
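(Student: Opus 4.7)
The plan is to extend $q$ by simply ``padding'' column $\xi$: I will add new basis vectors $e_{\xi, n}$ for $n \in [n^q_\xi, k)$ to $X_p$, leaving every other column unchanged, and declare the new matrix-unit-like operators $A^p_{\xi, m, n}$ to have no contribution from columns below $\xi$, i.e., to equal $1_{\xi, m, n}$ outright. No new entanglement with the lower part of the algebra is introduced; everything already present in $q$ is imported unchanged via the natural inclusion $\B_{X_q}\subseteq\B_{X_p}$.

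Concretely, I would set $a_p=a_q$, $n^p_\eta=n^q_\eta$ for $\eta\in a_q\setminus\{\xi\}$, and $n^p_\xi=\max(k,n^q_\xi)$. For $\eta\in a_q\setminus\{\xi\}$ and for $m,n\in[0,n^q_\xi)$, I would set $A^p_{\eta,m,n}=A^q_{\eta,m,n}$ and $A^p_{\xi,m,n}=A^q_{\xi,m,n}$ (the point being that any operator in $\B_{X_q}$ already kills every basis vector outside $X_q$, so it makes sense as an element of $\B_{X_p}$). For pairs $(m,n)\in[0,n^p_\xi)^2$ with at least one coordinate in $[n^q_\xi, n^p_\xi)$, I would set $A^p_{\xi,m,n}=1_{\xi,m,n}$; condition (4) of Definition \ref{def-P} then holds trivially for these new entries since $A^p_{\xi,m,n}|\xi=0$, while for the old entries it is inherited verbatim from $q$. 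Hence $p\in\PP$ and clearly $p\in\EE_{\xi,k}$.

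To check $p\leq q$, I would take $i_{pq}:\B_{X_q}\to\B_{X_p}$ to be the natural inclusion. It is manifestly a $\ast$-embedding (operators in $\B_{X_q}$ preserve every column of $X_p$ and vanish on every new basis vector of $X_p\setminus X_q$), it sends $A^q_{\eta,m,n}$ to $A^p_{\eta,m,n}$ by definition, and condition (d) holds because $A\in\B_{X_q}$ already satisfies $A=A P_{X_q}=A|X_q$. There is no genuine obstacle here: in contrast to later density lemmas where new operators in column $\xi$ must be coherently attached to previously chosen data in lower columns, the present lemma merely enlarges the index set, so the entire argument reduces to this bookkeeping.
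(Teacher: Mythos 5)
Your proposal is correct and is essentially identical to the paper's own proof: both pad column $\xi$ with new indices, set the new $A^p_{\xi,m,n}$ equal to $1_{\xi,m,n}$, keep all old data, and take $i_{p,q}$ to be the identity inclusion $\B_{X_q}\to\B_{X_p}$. The only cosmetic difference is your use of $\max(k,n^q_\xi)$ where the paper implicitly assumes $n^q_\xi<k$.
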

\begin{proof}
\item Consider $q\in \PP$ such that $\xi \in a_q$ but $n_\xi^q<k$. Define $p$ as follows:
\begin{itemize}
\item $a_p=a_q$,
\item $n_\eta^p = n_\eta^q$ for $\eta \in a_p\setminus \{\xi\}$ and $n_\xi^p=k$,
\item $A_{\eta,m,n}^p=A_{\eta,m,n}^q$ $\eta \in a_q\setminus \{\xi\}$,
\item $A_{\xi,m,n}^p=A_{\xi,m,n}^q$ for $n,m \in [0, n_\xi^q)$,
\item $A_{\xi,m,n}^p = 1_{\xi,m,n}$ if $n,m\in [0, k)$ and $\{n, m\}\cap [n_\xi^q, k)\not=\emptyset$.
\end{itemize}
It is clear that  $p\in \PP\cap \EE_{\xi, k}$.
Also $p\leq q$ as 
$Id_{\B_{X_q}}: \B_{X_q}\to \B_{X_p}$ is a *-embedding good for
$i_{p, q}$ in Definition \ref{def-P} (c).

\end{proof}

\begin{lemma}\label{density2}
Suppose that  $q\in \PP$ and $X\subseteq X_q$  and that  $\alpha\in a_q$. 
Then there is $p\leq q$ such that 
$p\in \FF_{X, \alpha}$, where
$$\FF_{X, \alpha}=\{p\in\PP: \alpha\in a_p, \ X\subseteq X_p,  \ \hbox{and}\
  \ \forall A\in \A^p_X\ \   \|A|\{\alpha\}\|\geq \|A|[\alpha, \omega_1)\|\}.$$
Moreover,  $a_p=a_q$ and $n^p_\xi=n^q_\xi$
whenever $\xi\in a_p\setminus\{\alpha\}$.
\end{lemma}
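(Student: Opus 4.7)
The plan is to change only $n_\alpha^q$, leaving $a_q$ and all other $n_\xi^q$ unchanged, and to use the extra room in the $\alpha$-column of $X_p$ to carry a faithful representation of the finite-dimensional algebra $\mathcal{D} := \B_{X_q \cap ((\alpha,\omega_1) \times \N)} \cong \bigoplus_{\xi \in a_q, \xi > \alpha} M_{n_\xi^q}$. Concretely, I would set $N := \sum_{\xi \in a_q, \xi > \alpha} n_\xi^q$ and $n_\alpha^p := n_\alpha^q + N$, and identify $\ell_2(\{\alpha\} \times [n_\alpha^q, n_\alpha^p))$ with $\C^N$ so that the natural block-diagonal inclusion gives a faithful $*$-representation $\rho : \mathcal{D} \hookrightarrow M_N$. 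Denote by $\pi_{(\alpha,\omega_1)} : \B_{X_q} \to \mathcal{D}$ the coordinate projection, which is a surjective $*$-homomorphism.

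Next, I would define a candidate embedding $i_{p,q} : \B_{X_q} \to \B_{X_p}$ by
\[
i_{p,q}(B) := B \oplus \rho(\pi_{(\alpha,\omega_1)}(B)),
\]
with the first summand acting on $\ell_2(X_q)$ and the second on the orthogonal complement $\ell_2(\{\alpha\} \times [n_\alpha^q, n_\alpha^p))$ inside $\ell_2(X_p)$. This is an injective $*$-homomorphism, since $\pi_{(\alpha,\omega_1)}$ and $\rho$ are, and clearly $i_{p,q}(B)|X_q = B$, giving clause (d). Setting $A_{\xi,m,n}^p := i_{p,q}(A_{\xi,m,n}^q)$ for $\xi \in a_q,\ m,n \in [0, n_\xi^q)$, and $A_{\alpha,m,n}^p := 1_{\alpha,m,n}$ for pairs with at least one coordinate in $[n_\alpha^q, n_\alpha^p)$, condition (4) of Definition \ref{def-P} holds for $\xi \leq \alpha$ because $\pi_{(\alpha,\omega_1)}(A_{\xi,m,n}^q) = 0$ there (so the new block is zero and everything is inherited from $q$), while for $\xi > \alpha$ the new block $\rho(\pi_{(\alpha,\omega_1)}(A_{\xi,m,n}^q))$ lives inside $\alpha \times \N \subseteq \xi \times \N$, hence is absorbed into $A_{\xi,m,n}^p|\xi$ and does not disturb $1_{\xi,m,n}$. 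Clauses (a)--(c) of $p \leq q$ are then immediate.

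For the norm inequality, note that $X \subseteq X_q$ forces $\A_X^p = i_{p,q}[\A_X^q]$, so any $A \in \A_X^p$ can be written $A = i_{p,q}(B)$ with $B \in \A_X^q$. Because the old and new halves of the $\alpha$-column are orthogonal, and because $\rho$ is isometric,
\[
\|A|\{\alpha\}\| = \max\bigl(\|B|\{\alpha\}\|,\ \|\rho(\pi_{(\alpha,\omega_1)}(B))\|\bigr) = \max\bigl(\|B|\{\alpha\}\|,\ \|B|(\alpha,\omega_1)\|\bigr) = \|B|[\alpha,\omega_1)\|.
\]
On the other hand, using Lemma \ref{norms-of-restrictions} together with orthogonality of $\{\alpha\} \times \N$ and $(\alpha,\omega_1) \times \N$,
\[
\|A|[\alpha,\omega_1)\| = \max\bigl(\|A|\{\alpha\}\|,\ \|B|(\alpha,\omega_1)\|\bigr) = \|B|[\alpha,\omega_1)\|.
\]
Therefore $\|A|\{\alpha\}\| = \|A|[\alpha,\omega_1)\|$, which is enough to place $p$ in $\FF_{X,\alpha}$.

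The main obstacle I expect is the verification of condition (4) for $\xi > \alpha$: one must check that placing the faithful copy of $\mathcal{D}$ in the new part of the $\alpha$-column does not ruin the required decomposition $A_{\xi,m,n}^p = A_{\xi,m,n}^p|\xi + 1_{\xi,m,n}$. This works precisely because the new block sits below $\xi$ in the column order, and this forces the ``replication'' to be done in the $\alpha$-column rather than by enlarging any $n_\xi^q$ with $\xi \neq \alpha$, in accord with the ``moreover'' clause.
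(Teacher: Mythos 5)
Your proposal is correct and is essentially the paper's own proof: the paper likewise enlarges only the $\alpha$-column by $l=\sum_{\xi\in a_q,\ \xi>\alpha}n^q_\xi$ new indices, copies the action of each operator on $X_q\cap((\alpha,\omega_1)\times\N)$ into that new block via a bijection of index sets (your $\rho\circ\pi_{(\alpha,\omega_1)}$ in explicit coordinates), sets $i_{p,q}(A)=A+i_r(A)$, and puts $A^p_{\alpha,m,n}=1_{\alpha,m,n}$ on the new indices. Your verification of condition (4) and the norm computation match the paper's, with the same conclusion that $\|A|\{\alpha\}\|=\|A|[\alpha,\omega_1)\|$ for $A\in\A^p_X$.
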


\begin{proof}
Let $q \in \PP$. We may assume that $X=X_q$. If $\alpha=\max(a_p)$, then there is nothing to prove. 
So let  $a_q\setminus(\alpha+1)=\{\xi_1, ..., \xi_k\}$ for some $k\in \N$ and put
$$l=\sum\{n_{\xi_i}^q:\ 1\leq i\leq k\}.$$
Consider 
 $Y = X_q\cap ((\alpha,\omega_1)\times \N)$. Let $\phi: Y\to [ n_\alpha^q,  n_\alpha^q+l)$ be any bijection.
 We obtain
 a $*$-homomorphism $i:\B_{X_q}\to \B_{X_q\cup \big(\{\alpha\}\times[ n_\alpha^q,  n_\alpha^q+l)\big)}$ 
 given by
$i(A)=A+i_r(A)$ where $i_r:\B_{X_q}\to \B_{\{\alpha\}\times[ n_\alpha^q,  n_\alpha^q+l)}$ satisfies 
$$\langle i_r(A) (e_{\alpha, n_\alpha^q+\phi(\xi_i, k)}),
 e_{\alpha, n_\alpha^q+\phi(\xi_{i'}, k')}\rangle=
\langle A (e_{\xi_{i}, k}), e_{\xi_{i'}, k'}\rangle$$
for all $(\xi,k), (\xi', k')\in Y$ and every $A\in  \B_{X_q}$.  Define $p$ in the following way
\begin{itemize}
\item $a_p = a_q$,
\item $n_\xi^p = n_\xi^q$ if $\xi \in a_p\setminus \{\alpha\}$ and $n_\alpha^p = n_\alpha^q+l$,
\item $A_{\xi,m,n}^p = i(A_{\xi,m,n}^q)$ for $(\xi, m), (\xi, n) \in X_q$,
\item $A_{\alpha,m,n}^p = 1_{\alpha,m,n}$ if $\{m,n\}\cap [n_\alpha^q, n_\alpha^p)\neq \emptyset$.
\end{itemize}
It is clear from the construction
that  $p\in \PP$ as condition (4) of Definition \ref{def-P} is satisfied due to the fact that 
we change only $A^q_{\xi, m, n}$ for $\xi>\alpha$ on $\{\alpha\}\times\N$, 
and that (a), (b) of Definition \ref{def-P} are satisfied.

If we put $i_{p, q}=i$, condition (c) follows from the fact that $i$ is a $*$-embedding since 
$\{\alpha\}\times[ n_\alpha^q,  n_\alpha^q+n)\cap X_q=\emptyset$. We also have
  $i_{p, q}(A_{\xi,m,n}^q)=A_{\xi,m,n}^p$  for $(\xi, m), (\xi, n) \in X_q$. The construction yields
  (d) of Definition \ref{def-P}.

Finally to check the main assertion of the lemma note that 
by Lemma \ref{norms-of-restrictions} for any $A\in \B_{X_q}$ we have 
$$\|i_{p, q}(A)|\{\alpha\}\|= \max(\|i_{p, q}(A)|\{\alpha\}\|, 
\|i_{p, q}(A)|\{\alpha\}\times[ n_\alpha^q,  n_\alpha^q+n)\|)=$$
$$=\max(\|i_{p, q}(A)|\{\alpha\}\|, \|A|(\alpha, \omega_1)\|)=
\max(\|i_{p, q}(A)|\{\alpha\}\|, \|i_{p, q}(A)|(\alpha, \omega_1)\|)=$$
$$=\|i_{p, q}(A)|[\alpha, \omega_1)\|$$
for any  $A\in  \B_{X_q}$ as required since $X\subseteq X_q$.

\end{proof}

\begin{lemma}\label{open-dense}
Let $X\subseteq\omega_1\times\N$ be finite and $\alpha\in X$. If $q\in \FF_{X, \alpha}$ and
$p\leq q$, then $p\in \FF_{X, \alpha}$.
\end{lemma}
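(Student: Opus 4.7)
The plan is to verify the three conditions defining $\FF_{X,\alpha}$ for $p$, namely $\alpha\in a_p$, $X\subseteq X_p$, and the norm inequality $\|A|\{\alpha\}\|\geq \|A|[\alpha,\omega_1)\|$ for every $A\in\A^p_X$. The first two are immediate from $p\leq q$ and $q\in\FF_{X,\alpha}$: Definition \ref{def-P}(a) gives $a_q\subseteq a_p$, so $\alpha\in a_p$, and $X\subseteq X_q\subseteq X_p$. The norm inequality is the heart of the matter, and here the key observation is that $\A^p_X$ is exactly the image of $\A^q_X$ under the embedding $i_{p,q}$. Indeed, by Definition \ref{def-P}(c), $i_{p,q}$ carries the generators $A^q_{\xi,m,n}$ (with $(\xi,m),(\xi,n)\in X$) of $\A^q_X$ onto the corresponding generators $A^p_{\xi,m,n}$ of $\A^p_X$, and since $i_{p,q}$ is a $*$-embedding, its image on $\A^q_X$ coincides with $\A^p_X$.

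Given $A\in \A^p_X$, I would pick $B\in\A^q_X$ with $A=i_{p,q}(B)$. Then Lemma \ref{norms-of-restrictions} (applied using $B\in\A^q_X\subseteq\A^q_{X_q}=\B_{X_q}$) gives
$$\|A|[\alpha,\omega_1)\|=\|B|[\alpha,\omega_1)\|.$$
On the other hand, condition (d) of Definition \ref{def-P} says $A|X_q=B$, so on the column $\alpha$ we have $A|(\{\alpha\}\times[0,n^q_\alpha))=B|\{\alpha\}$. Since $B\in\B_{X_q}$, the subspace $\ell_2(\{\alpha\}\times[0,n^q_\alpha))$ is invariant for $A$ (because for $e$ in this subspace, $A(e)=B(e)\in\ell_2(\{\alpha\}\times[0,n^q_\alpha))$), and on this invariant subspace $A$ acts as $B|\{\alpha\}$. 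Since the norm of an operator dominates its norm on any invariant subspace, we obtain
$$\|A|\{\alpha\}\|\geq \|B|\{\alpha\}\|.$$

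Combining these two inequalities with the hypothesis $q\in\FF_{X,\alpha}$ (applied to $B\in\A^q_X$) yields
$$\|A|\{\alpha\}\|\geq \|B|\{\alpha\}\|\geq \|B|[\alpha,\omega_1)\|=\|A|[\alpha,\omega_1)\|,$$
which is precisely the required condition for $p\in\FF_{X,\alpha}$. The main obstacle is the second inequality $\|A|\{\alpha\}\|\geq\|B|\{\alpha\}\|$: one has to check carefully that $\ell_2(\{\alpha\}\times[0,n^q_\alpha))$ is genuinely invariant under $A$ (as opposed to merely preserved by the compression $AP_{X_q}=B$), which is what allows the passage from the action of $A$ on this subspace to a lower bound for $\|A|\{\alpha\}\|$; this in turn rests squarely on condition (d) of Definition \ref{def-P} together with $B\in\B_{X_q}$.
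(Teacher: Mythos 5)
Your proof is correct and follows essentially the same route as the paper's: write $A=i_{p,q}(B)$ with $B\in\A^q_X$, get $\|A|[\alpha,\omega_1)\|=\|B|[\alpha,\omega_1)\|$ from Lemma \ref{norms-of-restrictions}, apply the hypothesis $q\in\FF_{X,\alpha}$ to $B$, and conclude $\|B|\{\alpha\}\|\leq\|A|\{\alpha\}\|$ from Definition \ref{def-P}(d). The extra care you take with the invariance of $\ell_2(\{\alpha\}\times[0,n^q_\alpha))$ is sound but not strictly needed, since $B|\{\alpha\}=(A|\{\alpha\})P_{X_q}$ already gives the last inequality.
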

\begin{proof}
Let $A\in \A^p_{X}$. As $X\subseteq X_q$ we have that $A=i_{p, q}(B)$ for some
$B\in \A^q_X\subset \A^q_{X_q}$. First  note that by Lemma \ref{norms-of-restrictions}
$$\|A|[\alpha, \omega_1)\|=\|B|[\alpha, \omega_1)\|.$$
 Now
$\|B|[\alpha, \omega_1)\|\leq \|B|\{\alpha\}\|$ by the hypothesis that $q\in F_{X, \alpha}$.
But $\|B|\{\alpha\}\|\leq \|A|\{\alpha\}\|$ by the fact that $A|X^q=B$ by
Definition \ref{def-P} (d). So $\|A|[\alpha, \omega_1)\|\leq \|A|\{\alpha\}\|$ as required.

\end{proof}

\subsection{Basic amalgamations}

\begin{definition}\label{convenient} We say that two elements $p, q\in \PP$ are in the convenient position
(as witnessed by $\sigma:  a_p \to a_q$) if and only if 
$$\Delta:=a_p\cap a_q <a_p\setminus \Delta<a_q\setminus \Delta$$
and there is an order preserving bijection $\sigma:a_p \to a_q$ such that
\begin{itemize}
\item $n_\xi^p = n_{\sigma(\xi)}^q$ for $\xi\in a_p$,
\end{itemize}
and the $*$-isomorphism of $\B_{X_q}$ onto $\B_{X_p}$ induced by $\sigma$, denoted by
$j_\sigma$, which is given by
 $$\langle j_\sigma(A)(e_{\xi,k}), e_{\xi, l}\rangle =
 \langle A(e_{\sigma(\xi),k}), e_{\sigma(\xi), l}\rangle$$
for every  $(\xi,k), (\xi,l)\in X_p$ and $A\in \B_{X_q}$
satisfies  
\begin{itemize}
\item 
$j_\sigma(A_{\sigma(\xi),n,m}^q)=A_{\xi,n,m}^p$
for every $\xi \in a_p$, $n,m \in [0,n_{\xi}^p)$.
\end{itemize}

\end{definition}

\begin{lemma}\label{root-equality} Suppose that two elements $p, q\in \PP$ are in the convenient position
as witnessed by $\sigma:  a_p \to a_q$ and that  $\xi \in \Delta=a_p\cap a_q$. Then 
$A_{\xi,n,m}^q=A_{\xi,n,m}^p$ for every   $n,m \in [0,n_{\xi}^p)=[0,n_{\xi}^q)$.
\end{lemma}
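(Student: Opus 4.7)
The plan is to show that the hypotheses force $\sigma$ to fix $\Delta$ pointwise and then to observe that, on operators supported on columns indexed by $\Delta$, the isomorphism $j_\sigma$ acts as the identity, so that the defining equation for $j_\sigma$ directly yields the conclusion.

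First I would unpack the convenient position hypothesis. Since $\Delta = a_p \cap a_q < a_p \setminus \Delta < a_q \setminus \Delta$, the set $\Delta$ is simultaneously an initial segment of $a_p$ and of $a_q$ (with the same cardinality $|\Delta|$ from both sides). Because $\sigma : a_p \to a_q$ is an order-preserving bijection, it must send the initial $|\Delta|$ elements of $a_p$ (which are exactly $\Delta$) to the initial $|\Delta|$ elements of $a_q$ (which are also $\Delta$), and an order-isomorphism between two copies of the same finite ordered set is the identity. Hence $\sigma(\eta) = \eta$ for every $\eta \in \Delta$, and consequently $n_\eta^p = n_{\sigma(\eta)}^q = n_\eta^q$ for all $\eta \in \Delta$.

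Next I would observe that for $\xi \in \Delta$, the operator $A^q_{\xi, n, m}$ is supported, in the sense of $\B_X$, on columns indexed by ordinals in $\Delta$. Indeed by Definition \ref{def-P}(4) we have $A^q_{\xi, n, m} = A^q_{\xi, n, m}|\xi + 1_{\xi, n, m}$, so its support lies in $a_q \cap (\xi+1)$; but any $\eta \in a_q$ with $\eta \leq \xi$ must belong to $\Delta$ (otherwise $\eta \in a_q \setminus \Delta$ would have to exceed $\xi \in \Delta$), so $a_q \cap (\xi+1) \subseteq \Delta \cup \{\xi\} \subseteq \Delta$. The same argument applies to $A^p_{\xi, n, m}$. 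Moreover, by the equality $n_\eta^p = n_\eta^q$ for $\eta \in \Delta$, the supports $X_p \cap ((\xi+1) \times \N)$ and $X_q \cap ((\xi+1) \times \N)$ coincide.

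Finally I would use the formula defining $j_\sigma$. For any $\eta \in \Delta$ with $\eta \leq \xi$ and any $k, l \in [0, n_\eta^p)$,
\[
\langle j_\sigma(A^q_{\xi, n, m})(e_{\eta, k}), e_{\eta, l}\rangle = \langle A^q_{\xi, n, m}(e_{\sigma(\eta), k}), e_{\sigma(\eta), l}\rangle = \langle A^q_{\xi, n, m}(e_{\eta, k}), e_{\eta, l}\rangle,
\]
using $\sigma(\eta) = \eta$. Since both $A^q_{\xi, n, m}$ and $j_\sigma(A^q_{\xi, n, m})$ are elements of $\B_X$ (so diagonal across columns) supported on exactly these columns, their matrix coefficients agree everywhere, so $j_\sigma(A^q_{\xi, n, m}) = A^q_{\xi, n, m}$ as operators. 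Combining this with the convenient-position identity $j_\sigma(A^q_{\sigma(\xi), n, m}) = A^p_{\xi, n, m}$, applied with $\sigma(\xi) = \xi$, gives $A^q_{\xi, n, m} = A^p_{\xi, n, m}$, as required. The only subtlety, and therefore the step that needs the most care, is the verification that $\sigma$ fixes $\Delta$ pointwise — everything else is routine matrix bookkeeping on columns.
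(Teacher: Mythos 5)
Your proposal is correct and follows essentially the same route as the paper: both hinge on the observation that the order-preserving bijection $\sigma$ must fix the common initial segment $\Delta$ pointwise, and that by Definition \ref{def-P}(4) an operator $A^q_{\xi,n,m}$ with $\xi\in\Delta$ is supported only on columns indexed by $\Delta$, so $j_\sigma$ acts on it as the identity. The paper merely phrases the second step as a column-by-column check of matrix coefficients over $a_p$ (zero on $a_p\setminus\Delta$, equal on $\Delta$) rather than via your support argument, which is an inessential difference.
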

\begin{proof}
Note that in Definition \ref{convenient} the bijection  $\sigma$ must be the identity on $\Delta$ 
because it is order-preserving and $\Delta$ is the initial fragment of both $a_p$ and $a_q$ and
so any $\xi\in \Delta$ must have the same position in both $a_p$ and $a_q$.
So $j_\sigma(A_{\xi,n,m}^q)=A_{\xi,n,m}^p$ and it is enough to prove that
 $j_\sigma(A_{\xi,n,m}^q)= A_{\xi,n,m}^q$.

For $\eta\in a_p\setminus \Delta$ we have 
$$\langle j_\sigma(A_{\xi,n,m}^q)(e_{\eta,k}), e_{\eta, l}\rangle =
 \langle A_{\xi,n,m}^q(e_{\sigma(\eta),k}), e_{\sigma(\eta), l}\rangle=0$$
for every $k, l\in \N$ such that  $(\eta,k), (\eta,l)\in X_p$ as $\sigma(\eta)\in a_q\setminus\Delta$.

On the other hand for $\eta\in\Delta$ we have $\sigma(\eta)=\eta$ and so
$$\langle j_\sigma(A_{\xi,n,m}^q)(e_{\eta,k}), e_{\eta, l}\rangle =
 \langle A_{\xi,n,m}^q(e_{\eta,k}), e_{\eta, l}\rangle$$
for every $k, l\in \N$ such that  $(\eta,k), (\eta,l)\in X_p$ as $\sigma(\eta)=\eta$
 by Definition \ref{convenient}.
Using Definition \ref{def-P} (4)
this proves the required $A_{\xi,n,m}^q=j_\sigma(A_{\xi,n,m}^q)=A_{\xi,n,m}^p$.
\end{proof}

\begin{lemma}\label{amal1} Suppose that $p, q\in \PP$  are in the convenient position
as witnessed by $\sigma: a_p\rightarrow a_q$. Then there is $r\leq p, q$ 
such that 
\begin{itemize}
\item $a_r = a_p\cup a_q$,
\item $n_\xi^r = n_\xi^p$ if $\xi \in a_p$ and $n_\xi^r = n_\xi^q$ if $\xi \in a_q$,
\item $i_{r, p}=Id_{\B_{X_p}}$,  $i_{r, q}=Id_{\B_{X_q}}.$
\end{itemize}
In particular,
\begin{itemize}
\item $A_{\xi, m, n}^r=A_{\xi, m, n}^p$ for each $\xi\in a_p$ and $n, m\in [0, n^r_\xi)$,
\item $A_{\xi, m, n}^r=A_{\xi, m, n}^q$ for each $\xi\in a_q$ and $n, m\in [0, n^r_\xi)$.
\end{itemize}
The element $r$ will be called the disjoint  amalgamation of $p$ and $q$.
\end{lemma}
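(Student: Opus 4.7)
The plan is to define $r$ explicitly, take $i_{r,p}$ and $i_{r,q}$ to be the natural inclusions, and verify each required clause mechanically. First I would set $a_r := a_p \cup a_q$ and define $n_\xi^r := n_\xi^p$ for $\xi \in a_p$, $n_\xi^r := n_\xi^q$ for $\xi \in a_q$. Because $\sigma$ is an order-preserving bijection and $\Delta = a_p \cap a_q$ is an initial segment of both $a_p$ and $a_q$ (thanks to $\Delta < a_p\setminus\Delta < a_q\setminus\Delta$), $\sigma$ must fix $\Delta$ pointwise; so $n_\xi^p = n_{\sigma(\xi)}^q = n_\xi^q$ for $\xi \in \Delta$ and the two prescriptions agree there. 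Hence $X_r = X_p \cup X_q$ with $X_p \cap X_q = \bigcup_{\xi\in\Delta}\bigl(\{\xi\}\times[0,n_\xi^p)\bigr)$. I would then set $A_{\xi,m,n}^r := A_{\xi,m,n}^p$ when $\xi\in a_p$ and $A_{\xi,m,n}^r := A_{\xi,m,n}^q$ when $\xi\in a_q$. The one nontrivial consistency check, that these prescriptions agree on $\Delta$, is exactly the content of Lemma \ref{root-equality}.

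Next I would verify that $r\in\PP$. Since the defining conditions of $\B_X$ are monotone in $X$, we have $\B_{X_p},\B_{X_q}\subseteq\B_{X_r}$, so each $A_{\xi,m,n}^r$ lies in $\B_{X_r}$, giving clause (3) of Definition \ref{def-P}. Clause (4), namely $A^r_{\xi,m,n} = (A^r_{\xi,m,n}|\xi) + 1_{\xi,m,n}$, is a statement about operators on $\ell_2(\omega_1\times\N)$ that does not depend on which ambient $\B_X$ is used, so it transfers verbatim from the corresponding condition for $p$ (when $\xi\in a_p$) or for $q$ (when $\xi\in a_q$). Clauses (1) and (2) hold by construction.

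Finally, to see $r \leq p$ I would take $i_{r,p}$ to be the inclusion $\B_{X_p}\hookrightarrow \B_{X_r}$, which is automatically a (nonunital) $*$-embedding. By construction $i_{r,p}(A^p_{\xi,m,n}) = A^p_{\xi,m,n} = A^r_{\xi,m,n}$ for all $\xi\in a_p$ and $m,n\in[0,n_\xi^p)$, giving Definition \ref{def-P}(c); and $i_{r,p}(A)|X_p = A\cdot P_{X_p} = A$ for every $A\in\B_{X_p}$ (since such an $A$ already vanishes off $X_p$), giving (d). Conditions (a)--(b) of Definition \ref{def-P} are immediate from the defining formulas for $a_r$ and $n_\xi^r$. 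The same argument with the roles of $p$ and $q$ interchanged yields $r\leq q$. There is no serious obstacle; the only nontrivial input is Lemma \ref{root-equality}, which ensures the coherence of the two halves of the amalgamation on the common root $\Delta$.
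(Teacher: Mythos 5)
Your proof is correct and follows the same route as the paper's (which simply says ``define $r$ as in the lemma'' and observes that the identity maps are $*$-embeddings); you have merely filled in the details the paper leaves implicit, and you correctly identify Lemma \ref{root-equality} as the one nontrivial input guaranteeing that the two prescriptions for $A^r_{\xi,m,n}$ agree on the root $\Delta$, which is evidently why that lemma is placed immediately before this one.
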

\begin{proof}
Define $r$ as in the lemma. As $p,q \in \PP$, it is easy to see that $r\in \PP$.
To see that $r\leq p,q$ note that $Id_{\B_{X_p}}$ and   $Id_{\B_{X_q}}$
are $*$-embeddings into ${\B_{X_r}}$.
\end{proof}

\begin{lemma}\label{amal3}  Suppose that $p, q$  are two elements of 
$\PP$ in the convenient position as witnessed by $\sigma_{q,p}: a_p\to a_q$.
 Let $U\in \B_{X_{p}\cup X_q}$ 
be a partial isometry satisfying  $UU^*=U^*U=P_{X_p\setminus X_q}$,
where $P_{X_p\setminus X_q}$ is the projection on
the space spanned by $\{e_{\xi, k}: (\xi, k)\in X_p\setminus X_q\}$.
Then there is $r_U=r\leq p, q$ 
such that 
\noindent\begin{itemize}
\item $a_r = a_{p}\cup a_{q}$,
\item $n_\xi^r = n_\xi^{p}$ if $\xi \in a_{p}$, $n_\xi^r = n_\xi^{q}$ if $\xi \in a_{q}$,
\item $i_{r, p}=Id_{\B_{X_p}}$,
\item $i_{r, q}(A)=A+Uj_{\sigma_{q,p}}(A)U^*$ for all $A\in \B_{X_q}$,
\end{itemize}
in particular,
\begin{itemize}
\item $A_{\xi, m, n}^r=A_{\xi, m, n}^{p}$ for $\xi \in a_{p}$ and $m,n\in[0, n_\xi^{r})$,
\item $A_{\xi, m, n}^r=UA_{\sigma_{q,p}^{-1}(\xi), m, n}^{p}U^*
+A_{\xi, m, n}^{q}$ for $\xi \in a_{q}\setminus a_p$ and $m,n\in[0, n_\xi^{r})$.
\end{itemize}
The element $r_U$ will be called the $U$-including amalgamation of $p$ and $q$; if 
$U=P_{X_p\setminus X_q}$, then $r_U$ is called the including amalgamation.
\end{lemma}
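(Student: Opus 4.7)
The plan is to construct $r$ directly with the data prescribed in the statement: set $a_r = a_p \cup a_q$, with $n_\xi^r$ inherited from $p$ or $q$ (which is consistent on $\Delta = a_p \cap a_q$ by the convenient position), and define the operators $A^r_{\xi,m,n}$ exactly as in the ``in particular'' clause. The candidate embeddings are $i_{r,p} := \mathrm{Id}_{\B_{X_p}}$ and $i_{r,q}(A) := A + U j_{\sigma_{q,p}}(A) U^*$.

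First I would check that $r \in \PP$, for which the only nontrivial requirement is condition (4) of Definition~\ref{def-P} at indices $\xi \in a_q \setminus a_p = a_q \setminus \Delta$. Since $UU^* = U^*U = P_{X_p \setminus X_q}$, the operator $U A^p_{\sigma_{q,p}^{-1}(\xi),m,n} U^*$ is supported in the columns indexed by $a_p \setminus \Delta$. The convenient position guarantees $a_p \setminus \Delta < a_q \setminus \Delta$, so this operator vanishes on column $\xi$, and condition (4) at $\xi$ reduces to the corresponding property of $A^q_{\xi,m,n}$ already supplied by $q \in \PP$.

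The inequality $r \leq p$ is immediate: the identity is a $*$-embedding, $i_{r,p}(A^p_{\xi,m,n}) = A^r_{\xi,m,n}$ for $\xi \in a_p$ by construction (using Lemma~\ref{root-equality} for $\xi \in \Delta$), and the restriction condition is trivial. The inequality $r \leq q$ is the core of the argument. Linearity and the $*$-property of $i_{r,q}$ are clear. For multiplicativity, the two cross-terms $A \cdot U j_\sigma(B) U^*$ and $U j_\sigma(A) U^* \cdot B$ both vanish, because any operator in $\B_{X_q}$ annihilates vectors supported on $X_p \setminus X_q$ columns (these are disjoint from $X_q$), while the principal term collapses via
$$U j_\sigma(A) U^* U j_\sigma(B) U^* = U j_\sigma(A) P_{X_p \setminus X_q} j_\sigma(B) U^* = U j_\sigma(AB) U^*,$$
where the second equality uses that $j_\sigma(A) \in \B_{X_p}$ is column-diagonal (hence commutes with $P_{X_p \setminus X_q}$) together with $P_{X_p \setminus X_q} U^* = U^*$. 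Injectivity of $i_{r,q}$ follows from $i_{r,q}(A)|X_q = A$, since $U j_\sigma(A) U^*$ is supported on $X_p \setminus X_q$, disjoint from $X_q$; this also yields condition (d) of Definition~\ref{def-P}. Finally, $i_{r,q}(A^q_{\xi,m,n}) = A^r_{\xi,m,n}$ splits into two cases: for $\xi \in \Delta$ one needs $U A^p_{\xi,m,n} U^* = 0$, which holds because $A^p_{\xi,m,n}$ vanishes on every column strictly above $\xi$ by Definition~\ref{def-P}(4), whereas $U^*$ maps into columns in $a_p \setminus \Delta$, all of which exceed $\xi$; the case $\xi \in a_q \setminus \Delta$ is the direct definition together with the defining property of $j_\sigma$.

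I expect the main obstacle to be the bookkeeping around the supports of $U$ and $U^*$ combined with the column-diagonality of operators in $\B_{X_p}$: namely, showing simultaneously that the cross-terms vanish, that the principal product simplifies correctly, and that $U A^p_{\xi,m,n} U^* = 0$ for $\xi \in \Delta$. Everything is driven by the twin hypotheses $UU^* = U^*U = P_{X_p \setminus X_q}$ and $U \in \B_{X_p \cup X_q}$, the latter ensuring $U$ itself preserves the column decomposition.
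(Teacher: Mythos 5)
Your proposal is correct and follows essentially the same route as the paper: define $r$ by the data in the statement, take $i_{r,p}$ to be the identity, and verify that $A\mapsto A+Uj_{\sigma_{q,p}}(A)U^*$ is a $*$-embedding using the twin identities $UU^*=U^*U=P_{X_p\setminus X_q}$ and the column-invariance of operators in $\B_{X_p}$. You supply more of the support bookkeeping (vanishing cross-terms, condition (4) of Definition~\ref{def-P} at $\xi\in a_q\setminus a_p$, and $UA^p_{\xi,m,n}U^*=0$ for $\xi$ in the root) than the paper makes explicit, but the underlying argument is identical.
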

\begin{proof} 
Define $r_U$ as in the lemma. It is clear by Definition \ref{def-P} applied to $p$ and $q$ that $r\in \PP$.
$r_U\leq p$ because $Id_{\B_{X_p}}: {\B_{X_p}}\to{\B_{X_r}}$ is a $*$-embedding. 
For $r_U\leq q$ we note that $A_{\xi, m, n}^r|X_q=A_{\xi, m, n}^q$ as 
$\big(UA_{\sigma_{q,p}^{-1}(\xi), m, n}^{p}U^*\big)|X_q=0$ since $UU^*=U^*U=P_{X_p\setminus X_q}$
and that  the formula $i_{r, q}(A)=A+Uj_\sigma(A)U^*$ for all $A\in \B_{X_q}$ defines
a *-embedding from ${\B_{X_p}}$ to ${\B_{X_r}}$. This is follows from the fact that
sending $A$ to $Uj_\sigma(A)U^*$ is a *-homomorphism since $\B_{X_p \setminus X_q}$
is $A^p_{X_p}$-invariant, so $i_{r, q}$ is a *-homomorphism.
But its kernel is null since $Uj_\sigma(A)U^*=(Uj_\sigma(A)U^*)|(X_p\setminus X_q)$ for all $A\in \B_{X_q}$.
\end{proof}

\begin{lemma}\label{one-half} Suppose that
$v_1, v_2$ are two orthogonal unit vectors of $\C^n$
for $n>1$.
 Then there is
a unitary $U\in M_n$ such that 
$$\|[UAU^*, A]\|=1/2$$
for every nonexpanding linear  $A\in M_n$ satisfying $A(v_1)=v_1$ and $A(v_2)=0$.
\end{lemma}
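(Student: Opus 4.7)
The plan is to take $U\in M_n$ to be the unitary acting on $V:=\mathrm{span}(v_1,v_2)$ as the rotation by $\pi/4$ (sending $v_1\mapsto (v_1-v_2)/\sqrt{2}$ and $v_2\mapsto (v_1+v_2)/\sqrt{2}$) and as the identity on $V^\perp$. The value $1/2$ is forced by the two-dimensional geometry: for two rank-one projections in $\mathbb{C}^2$ whose ranges meet at angle $\theta$ one has $\|[P,Q]\|=|\sin\theta\cos\theta|$, which is maximised at $\theta=\pi/4$ with value $1/2$ (Stampfli's bound, cited earlier in the paper). Our $U$ is engineered precisely so that $UP_{v_1}U^*$ is the projection onto $(v_1-v_2)/\sqrt{2}$, realising this extremal angle against $P_{v_1}$.

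For the lower bound $\|[UAU^*,A]\|\geq 1/2$ I would simply evaluate the commutator on $v_1$, using only $Av_1=v_1$ and $Av_2=0$. Since $U^*v_1=(v_1+v_2)/\sqrt 2$ we get $AU^*v_1=v_1/\sqrt 2$ and hence $UAU^*v_1=(v_1-v_2)/2$. Applying $A$ once more (using $Av_1=v_1$, $Av_2=0$) gives $A(v_1-v_2)/2=v_1/2$, while $UAU^*(Av_1)=UAU^*v_1=(v_1-v_2)/2$; subtracting yields $[UAU^*,A]v_1=-v_2/2$.

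For the matching upper bound I would exploit contractivity: from $Av_1=v_1$ and $\|A\|\leq 1$, equality in Cauchy--Schwarz applied to $\langle A^*v_1,v_1\rangle=\langle v_1,Av_1\rangle=1$ forces $A^*v_1=v_1$, so $v_1^\perp$ is $A$-invariant. Combined with $Av_2=0$ this places $A$ in block-lower-triangular form with respect to the splitting $\mathbb{C}^n=V\oplus V^\perp$, with $V\to V$ block equal to $P_{v_1}$ (in the basis $v_1,v_2$). Since $U$ also respects this splitting, $[UAU^*,A]$ acquires a block form in which the $V^\perp\to V^\perp$ block vanishes, the $V\to V$ block equals the two-dimensional commutator $[U_V P_{v_1}U_V^*,P_{v_1}]$ of norm exactly $1/2$, and only a $V^\perp\to V$ cross-term remains.

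The main obstacle is controlling this cross-term. It vanishes automatically when $A$ is self-adjoint (or a projection, the case relevant to the construction in Section~6), because then $Av_2=0$ also forces $A^*v_2=0$ and hence $A(V^\perp)\subseteq V^\perp$; in full generality one must invoke the contractivity constraint $A^*A\leq I$ to see that the $V\to V^\perp$ block of $A^*$ (equivalently the $V^\perp\to V$ block of $A$) is sufficiently dominated by the fact that the $V\to V$ block has the eigenvalue $1$ already attained on the unit vector $v_1$, so that the cross-term does not inflate the commutator's norm beyond $1/2$. Carefully pinning down this cancellation is the technical heart of the argument.
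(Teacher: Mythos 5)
Your choice of $U$ and your lower bound are essentially the paper's own: the paper also conjugates the projection $P$ onto $\C v_1$ by a unitary acting as a $\pi/4$ rotation (in the paper, a reflection) on $V=\mathrm{span}(v_1,v_2)$ and as the identity on $V^{\perp}$, computes $[UPU^*,P]$ explicitly to get norm $1/2$, and notes that the evaluation survives with $A$ in place of $P$ because $A$ agrees with $P$ on $V$ and $U$ preserves $V$. The two arguments diverge at the upper bound. The paper does no block analysis at all: it quotes Stampfli's theorem that $\|[B,C]\|\leq 1/2$ whenever $0\leq B,C\leq 1$, applied to $B=UAU^*$ and $C=A$. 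Your block decomposition, by contrast, gives a self-contained proof of the exact equality whenever $A$ is self-adjoint: then $V$ reduces $A$, so $A=P_{v_1}\oplus D$ and $UAU^*=(UP_{v_1}U^*)\oplus D$ with the same $D$, whence $[UAU^*,A]=[UP_{v_1}U^*,P_{v_1}]\oplus 0$ has norm exactly $1/2$. That covers the only case the paper ever uses (projections, in the proof of Theorem \ref{theorem-main}) and is arguably cleaner than the citation.

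However, the ``technical heart'' you defer in your last paragraph is a genuine gap, and it cannot be closed: for a general nonexpanding $A$ the $V^{\perp}\to V$ cross-term is not dominated, and the upper bound $\leq 1/2$ is false. Take $n=3$, $v_1=e_1$, $v_2=e_2$, and $A$ determined by $Ae_1=e_1$, $Ae_2=0$, $Ae_3=e_2$; then $A^*A=\mathrm{diag}(1,0,1)$, so $A$ is a contraction satisfying all hypotheses, yet with your $U$ one computes
$$[UAU^*,A]e_3=-\Bigl(\tfrac12+\tfrac{1}{\sqrt2}\Bigr)e_1+\tfrac12 e_2,$$
so $\|[UAU^*,A]\|>1$. (The paper's appeal to Stampfli has the same blind spot: it requires $A\geq 0$, which is not among the hypotheses of the lemma, so the lemma as literally stated is only correct in the direction $\geq 1/2$.) The fix is either to restrict the statement to self-adjoint (or positive) $A$ --- which is all that Lemma \ref{amal4} needs as it is applied, since there $A$ is a projection up to a small perturbation --- or to weaken the conclusion to $\|[UAU^*,A]\|\geq 1/2$, the only inequality used downstream. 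As written, your final paragraph asserts a cancellation that does not occur, so the proposal does not prove the stated equality in the stated generality.
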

\begin{proof} Choose an orthonormal  basis $v_1, \dots v_n$ of $\C^n$  starting with $v_1, v_2$
and consider the orthogonal projection $P\in M_n$ onto the line containing $v_1$, so in particular we have
 $P(v_1)=v_1$ and $P(v_2)=0$. Let $U=V\oplus I_{n-2}$, $U^*=V^*\oplus I_{n-2}$, where 
$$V=V^*=\begin{pmatrix}
-1\over\sqrt2 & 1\over\sqrt2 \\
1\over\sqrt2 & 1\over\sqrt2
\end{pmatrix}. $$
So we obtain that 
$$UPU^*=\begin{pmatrix}
-1\over\sqrt2 & 1\over\sqrt2 \\
1\over\sqrt2 & 1\over\sqrt2
\end{pmatrix}
\begin{pmatrix}
1 & 0 \\
0 & 0
\end{pmatrix}
\begin{pmatrix}
-1\over\sqrt2 & 1\over\sqrt2 \\
1\over\sqrt2 & 1\over\sqrt2
\end{pmatrix}\oplus 0_{n-2}=
\begin{pmatrix}
1\over2 & -{1\over2} \\
-{1\over2} & 1\over2
\end{pmatrix}\oplus 0_{n-2}.
$$
Hence

$$[UPU^*, P]=UPU^*P-PUPU^*=$$
$$=\begin{pmatrix}
1\over2 &-{1\over2} \\
-{1\over2} & 1\over2
\end{pmatrix}\begin{pmatrix}
1 & 0 \\
0 & 0
\end{pmatrix}\oplus 0_{n-2}-
\begin{pmatrix}
1 & 0 \\
0 & 0
\end{pmatrix}
\begin{pmatrix}
1\over2 & -{1\over2} \\
-{1\over2} & 1\over2
\end{pmatrix}\oplus 0_{n-2}=
$$
$$
=\begin{pmatrix}
1\over2 & 0 \\
-{1\over2} & 0
\end{pmatrix}\oplus 0_{n-2}
-
\begin{pmatrix}
1\over2 & -{1\over2} \\
0 & 0
\end{pmatrix}\oplus 0_{n-2}=
\begin{pmatrix}
0 & {1\over2} \\
-{1\over2} & 0
\end{pmatrix}\oplus 0.
$$
And so $\|[UPU^*, P]\|=1/2$ and in particular 
\begin{itemize}
\item $[UPU^*, P](v_1)=(1/2)v_2$ and 
\item $[UPU^*, P](v_2)=(-1/2)v_1$.
\end{itemize}
Since $P$ equals $A$ on the space spanned by  $v_1$ and $v_2$,
and $U, U^*$ leave this space invariant, we have the same equalities for
$A$ instead of $P$, hence $\|[UAU^*, A]\|\geq 1/2$. The other inequality
follows from the fact that 
 $\|[B, C]\|\leq 1/2$ for any two  $B, C$ 
satisfying $0\leq B, C\leq 1$ by a result of Stampfli
(Corollary 2 of  \cite{stampfli}).
\end{proof}

\begin{lemma}\label{amal4}  Suppose that $p, q$  are two elements of 
$\PP$ in the convenient position as witnessed by $\sigma: a_p\to a_q$ such that 
$\Delta<a_{p}\setminus \Delta< a_{q}\setminus \Delta$. Suppose that $n_\xi^q= n\geq1$
for every $\xi\in a_q\setminus a_p$ and that 
$v_1=(v^1_0, ...v^1_{n-1}), v_2=(v^2_0, ...v^2_{n-1})$ are two orthogonal unit vectors 
of $\C^n$. Then  there is $r\leq p, q$ 
such that 
\noindent\begin{itemize}
\item $a_r = a_{p}\cup a_{q}$,
\item $n_\xi^r = n_\xi^{p}$ if $\xi \in a_{p}$, $n_\xi^r = n_\xi^{q}$ if $\xi \in a_{q}$,
\end{itemize}
and 
\begin{itemize}
\item $\|[i_{rq}(A), i_{rp}(j_\sigma(A))]\|=1/2$
\end{itemize}
for every nonexpanding $A\in \B_{X_q}$ such that there is $\xi\in a_q\setminus a_p$
with $A(\sum_{k<n}v^1_ke_{\xi,k})=\sum_{k<n}v^1_ke_{\xi,k}$ and 
$A(\sum_{k<n}v^2_ke_{\xi,k})=0$.
We call $r$ the $(v_1, v_2)$-anticommuting amalgamation of $p$ and $q$.
\end{lemma}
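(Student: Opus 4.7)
The plan is to take $r$ to be the $U$-including amalgamation of $p$ and $q$ in the sense of Lemma~\ref{amal3}, for a carefully chosen partial isometry $U$. The convenient-position hypothesis gives $n^p_\eta = n^q_{\sigma(\eta)} = n$ for every $\eta \in a_p \setminus \Delta$, so each column of $X_p \setminus X_q$ is canonically the $n$-dimensional Hilbert space on which the given $v_1, v_2$ live. For each such $\eta$, apply Lemma~\ref{one-half} on column $\eta$ to produce a unitary $U_\eta$ with the one-half commutator property, and set $U = \bigoplus_{\eta \in a_p \setminus \Delta} U_\eta$, extended by zero elsewhere. Then $UU^* = U^*U = P_{X_p \setminus X_q}$, and Lemma~\ref{amal3} produces $r = r_U \leq p, q$ with the required data $(a_r, n^r_\xi)$.

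To verify the commutator estimate, use $i_{rp} = Id_{\B_{X_p}}$ and $i_{rq}(A) = A + Uj_\sigma(A)U^*$ from Lemma~\ref{amal3} to expand
\[
[i_{rq}(A),\, i_{rp}(j_\sigma(A))] \;=\; [A,\, j_\sigma(A)] \;+\; [Uj_\sigma(A)U^*,\, j_\sigma(A)].
\]
The first bracket vanishes column by column: both operators lie in the column-block-diagonal algebra, on $\Delta$-columns one has $A|\zeta = j_\sigma(A)|\zeta$ directly from Definition~\ref{convenient} (since $\sigma$ fixes $\Delta$), while on columns in $a_p \setminus \Delta$ (resp.\ in $a_q \setminus \Delta$) the factor $A$ (resp.\ $j_\sigma(A)$) is zero. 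The second bracket is supported on columns in $a_p \setminus \Delta$, and on each such column $\eta$ its restriction equals $[U_\eta (A|\sigma(\eta)) U_\eta^*,\, A|\sigma(\eta)]$ after the canonical identification of $\eta$-column with $\sigma(\eta)$-column.

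For the distinguished column $\eta^\star = \sigma^{-1}(\xi)$, the hypothesis on $A$ translates into $(A|\xi)(v_1) = v_1$ and $(A|\xi)(v_2) = 0$, so Lemma~\ref{one-half} gives norm exactly $1/2$ on that column, yielding $\|[i_{rq}(A), i_{rp}(j_\sigma(A))]\| \geq 1/2$. The matching upper bound $\leq 1/2$ on every column in $a_p \setminus \Delta$ follows from Stampfli's inequality (as invoked in Lemma~\ref{one-half}), since in the intended application $A$ is a positive contraction and hence $A|\sigma(\eta)$ and its unitary conjugate lie in $[0,1]$. The only moving part is the routine block-diagonal bookkeeping needed to check that $j_\sigma$ reindexes the $\eta$-column as the $\sigma(\eta)$-column and interacts with $U$ as claimed, which is immediate from Definition~\ref{convenient} together with the block structure of the algebras $\B_X$.
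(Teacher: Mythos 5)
Your proposal is correct and follows essentially the same route as the paper: the paper likewise builds $U$ as a direct sum of the unitaries from Lemma \ref{one-half} over the columns indexed by $a_p\setminus\Delta$, forms the $U$-including amalgamation $r_U$ of Lemma \ref{amal3}, reads off the lower bound $1/2$ from the distinguished column $\sigma^{-1}(\xi)$, and gets the upper bound from Stampfli. Your explicit expansion of the commutator into $[A, j_\sigma(A)] + [Uj_\sigma(A)U^*, j_\sigma(A)]$ and your remark that the Stampfli upper bound really uses positivity (which holds in the intended application to projections) are slightly more careful than the paper's write-up, but the argument is the same.
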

\begin{proof}
By Lemma \ref{one-half} for each $\xi\in a_q\setminus a_p$ for $\eta_\xi=\sigma^{-1}(\xi)$ there is
a unitary $U_\xi\in \B_{\{\eta_\xi\}\times[0,n)}$ such that 
$$\|[U_\xi(j_\sigma(A)|\{\eta_\xi\})U^*_\xi, j_\sigma(A)|\{\eta_\xi\}]\|
=1/2\leqno (*)$$
whenever $A\in \B_{X_q}$ is nonexpanding such that 
$$A(\sum_{k<n}v^1_ke_{\xi,k})=\sum_{k<n}v^1_ke_{\xi,k}\  \hbox{\rm and}
\ A(\sum_{k<n}v^2_ke_{\xi,k})=0.$$

Let $U\in \B_{X^p\cup X^q}$ be a partial isometry  such  that 
$U|(\{\eta_\xi\}\times [0,n))=U_\xi$ and  
$U$ is zero on the columns not in in $X^p\setminus X^q$, and $UU^*=P_{X^p\setminus X^q}$.
Consider the $U$-including amalgamation $r_U\leq p, q$  as in Lemma \ref{amal3}.

We claim that $r=r_U$ satisfies the lemma we are proving. Let
$A\in \B_{X_q}$ be nonexpanding and  $\xi\in a_q\setminus a_p$ be such that
 $A(\sum_{k<n}v^1_ke_{\xi,k})=\sum_{k<n}v^1_ke_{\xi,k}$ and 
$A(\sum_{k<n}v^2_ke_{\xi,k})=0$.
 Since $\ell_2({\{\xi\}\times[0,n_\xi^q)})$ for $\xi\in a_q\setminus a_p$ are invariant 
for $\B_{X^q}$  the operator $A|\{\xi\}$ is nonexpanding as well and so is
$j_\sigma(A)|\{\eta_\xi\}$.
By Lemma \ref{amal3} we have $i_{r, q}(A)=A+Uj_\sigma(A)U^*$
and $i_{r, p}(j_\sigma(A))=j_\sigma(A)$, so for $\eta_\xi=\sigma^{-1}(\xi)$ we have
$$[i_{r, q}(A), i_{r, p}(j_\sigma(A))]|(\{\eta_\xi\}\times[0,n))=
[U_\xi(j_\sigma(A)|\{\eta_\xi\})U_\xi^*, j_\sigma(A)|\{\eta_\xi\}],$$
So by (*) we have $\|[i_{r, q}(A), i_{r, p}(j_\sigma(A))]\|\geq1/2$. 
The other inequality follows from the maximality of $1/2$ (Corollary 2 of  \cite{stampfli}).

\end{proof}

\subsection{Types of $3$-amalgamations}

\begin{lemma}\label{amaltype1}  Suppose that $p_1, p_2, p_3$  are distinct 
 elements in $\PP$ which are pairwise in the convenient position as  witnessed by 
$\sigma_{j, i}: a_{p_i}\to a_{p_j}$ for $1\leq i<j\leq 3$ 
such that $\Delta<a_{p_1}\setminus \Delta< a_{p_2}\setminus \Delta <a_{p_3}\setminus \Delta$.
  Then there is $r\leq p_1, p_2, p_3$ 
satisfying
\begin{itemize}
\item $a_r = a_{p_1}\cup a_{p_2}\cup a_{p_3}$;
\item there is $n\in \N$ such that for each $\xi\in a_r$ we have
$$n=n_\xi^r >n'=\max\{ n_\xi^{p_i}: \xi \in a_{p_i},  1\leq i\leq 3\},$$
\item $$r\in \bigcap\{\FF_{X, \alpha}: X\in\{X_{p_1}, X_{p_2}, X_{p_3}\}, \alpha\in X\},$$
\end{itemize}
The element $r$ is called the amalgamation of $p_1, p_2, p_3$ of type 1.
\end{lemma}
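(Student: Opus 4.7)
The plan is to build $r$ in three stages: a three-fold disjoint amalgamation $r_0$, iterated applications of Lemma \ref{density2} to force the $\FF$-conditions, and a final uniformization of column heights via Lemma \ref{density1.5}. The organising observation is that in the disjoint amalgamation every element of $\A^{r_0}_{X_{p_i}}$ is already supported inside $X_{p_i}$, so computing the restriction norms $\|\,\cdot\,|\{\alpha\}\|$ and $\|\,\cdot\,|[\alpha,\omega_1)\|$ inside $r_0$ reduces to the same computation inside $p_i$. Hence one can enforce the $\FF_{X_{p_i},\alpha}$-conditions on the amalgamation itself rather than trying to prepare each $p_i$ in advance (which would be incompatible with keeping convenient position).

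Concretely, in Stage 1 I would generalise Lemma \ref{amal1} to three factors: set $a_{r_0} = a_{p_1}\cup a_{p_2}\cup a_{p_3}$, and for each $\xi\in a_{r_0}$ choose any $i$ with $\xi\in a_{p_i}$ and put $n_\xi^{r_0} = n_\xi^{p_i}$ and $A^{r_0}_{\xi,m,n} = A^{p_i}_{\xi,m,n}$ for $m,n\in[0,n_\xi^{p_i})$. On the common root $\Delta = a_{p_1}\cap a_{p_2}\cap a_{p_3}$ these assignments do not depend on $i$: heights agree by convenient position, and operators agree by Lemma \ref{root-equality}. The verifications $r_0\in\PP$ and $r_0\leq p_i$, realised by $i_{r_0,p_i}=\mathrm{Id}_{\B_{X_{p_i}}}$, then proceed exactly as in Lemma \ref{amal1}; condition (4) of Definition \ref{def-P} poses no new problem since every $A^{p_i}_{\xi,m,n}$ already lives in $\B_{X_{p_i}}$ and vanishes on the columns corresponding to $a_{p_j}\setminus a_{p_i}$ for $j\neq i$.

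In Stage 2 I enumerate the finite list of pairs $(X_{p_i},\alpha)$ with $i\in\{1,2,3\}$ and $\alpha\in a_{p_i}$, and, starting from $s_0=r_0$, apply Lemma \ref{density2} at each step with the next pair to obtain $s_{j+1}\leq s_j$ in $\FF_{X,\alpha}$. The hypotheses $X\subseteq X_{s_j}$ and $\alpha\in a_{s_j}$ hold because $s_j\leq r_0$ and $a_{s_j}=a_{r_0}$ throughout. Lemma \ref{open-dense} guarantees that $\FF$-conditions already obtained are preserved at every subsequent step, so the final element $r_1$ lies in every $\FF_{X_{p_i},\alpha}$ demanded by the statement. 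Throughout Stage 2 only certain column heights change; $a_{s_j}$ remains $a_{r_0}$. For Stage 3, pick $n$ strictly greater than both $n'$ and $\max\{n_\xi^{r_1}:\xi\in a_{r_1}\}$, and iterate Lemma \ref{density1.5}, once per column, to obtain $r\leq r_1$ with $n_\xi^r=n$ uniformly; the $\FF$-conditions persist from $r_1$ to $r$ by Lemma \ref{open-dense}, and $r\leq r_1\leq r_0\leq p_i$ completes the three bullet points.

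The main obstacle I anticipate is the verification work in Stage 1: showing that the formulas for $n_\xi^{r_0}$ and $A^{r_0}_{\xi,m,n}$ are independent of the chosen index $i$ on $\Delta$, and that the three identity maps $\mathrm{Id}_{\B_{X_{p_i}}}$ can simultaneously serve as the embeddings $i_{r_0,p_i}$. The first reduces to Lemma \ref{root-equality}; the second uses the pairwise disjointness of the supports of the operators indexed by $a_{p_i}\setminus\Delta$ for different $i$. Once these compatibilities are checked, Stages 2 and 3 are routine iterations of the two density lemmas combined with the hereditariness of $\FF$-conditions provided by Lemma \ref{open-dense}.
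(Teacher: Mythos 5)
Your proof is correct, but it organizes the construction in the opposite order from the paper's. The paper first prepares $p_1$ alone: it applies Lemma \ref{density2} once for each $\alpha\in a_{p_1}$ and then Lemma \ref{density1.5} to equalize the column heights at some $n>n'$, obtaining $q_1\leq p_1$; it then transports this preparation isomorphically to produce $q_2\leq p_2$ and $q_3\leq p_3$ so that $q_1,q_2,q_3$ are again pairwise in convenient position, and finally performs two rounds of disjoint amalgamations ($s_1\leq q_1,q_2$ and $s_2\leq q_1,q_3$, then $r\leq s_1,s_2$), invoking Lemma \ref{open-dense} at the end. You instead amalgamate first (a three-fold disjoint amalgamation, which is exactly what the paper's iterated pairwise amalgamations produce, and whose well-definedness on the root is Lemma \ref{root-equality}) and only then run Lemmas \ref{density2} and \ref{density1.5} on the combined condition, with Lemma \ref{open-dense} providing persistence. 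Your order of operations buys something real: it eliminates the paper's ``isomorphic copying'' step, i.e.\ the implicit claim that the constructions of Lemmas \ref{density2} and \ref{density1.5} can be transported along $j_\sigma$ so as to keep $q_1,q_2,q_3$ in convenient position --- the least explicit point of the paper's argument --- at the modest cost of spelling out the three-fold analogue of Lemma \ref{amal1}. One small correction: your parenthetical claim that preparing each $p_i$ in advance ``would be incompatible with keeping convenient position'' is not right --- the paper does exactly that, by making the three preparations isomorphic; but this aside does not affect the validity of your own argument. (Note also that for this lemma, unlike for types 2 and 3, convenient position is not needed after the initial amalgamation, which is why your reordering is available here.)
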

\begin{proof}
Let $a_{p_1}=\{\alpha_1, ..., \alpha_k\}$ in the increasing order. 
Using Lemma \ref{density2} find $p_1\geq p^1_1\geq ... \geq p^k_1$ such that
$a_{p_1}=a_{p^k_1}$ and $a_{p^1_j}\in \FF_{X_{p_1}, \alpha_j}$ for $1\leq j\leq k$.
Now using Lemma  \ref{density1.5} several times find $q_1\leq p^k_1$ such that
$a_{q_1}=a_{p_1}$ and $n_\xi^{q_1}=n>n'$ for every $\xi\in a_{q_1}$. 

Now find $q_2, q_3\in \PP$ such that $q_2\leq p_2$ and $q_3\leq p_3$ 
and "isomorphic"  with $q_1$ i.e., with $a_{q_2}=a_{p_2}$, $a_{q_3}=a_{p_3}$
and where $q_1, q_2, q_3$ are pairwise in the convenient position as  witnessed by 
$\sigma_{j, i}: a_{q_i}\to a_{q_j}$ for $1\leq i<j\leq 3$. Note that by Lemma \ref{open-dense}
we have $$q_i\in \bigcap\{\FF_{X_{p_i}, \alpha}: 
\alpha\in X_{p_i}\}.$$

Now let $s_1\leq q_1, q_2$ and $s_2\leq q_1, q_3$ be the disjoint amalgamations as in
Lemma \ref{amal1}. Note that $s_1$ and $s_2$ are in the convenient position as witnessed
by $Id_{a_{p_1}}\cup\sigma_{3, 2}: a_{p_1}\cup a_{p_2}\rightarrow  a_{p_1}\cup a_{p_3}$
where $a_{s_1}\cap a_{s_2}=a_{p_1}$. So now let $r\leq s_1, s_2$ be the disjoint
amalgamation of $s_1$ and $s_2$ as in Lemma \ref{amal1}. Note that 
we have the final statement of the lemma by Lemma \ref{open-dense}.

\end{proof}

\begin{lemma}\label{amaltype2}  Suppose that $p_1, p_2, p_3$  are distinct 
 elements in $\PP$ which are pairwise in the convenient position as  witnessed by 
$\sigma_{j, i}: a_{p_i}\to a_{p_j}$ for $1\leq i<j\leq 3$ 
such that $\Delta<a_{p_1}\setminus \Delta< a_{p_2}\setminus \Delta <a_{p_3}\setminus \Delta$.
  Then there is $r\leq p_1, p_2, p_3$ 
satisfying
\begin{itemize}
\item $a_r = a_{p_1}\cup a_{p_2}\cup a_{p_3}$;
\item $n_\xi^r = n_\xi^{p_i}$ if $\xi \in a_{p_i}$ 
for $1\leq i\leq 3$,
\item $i_{r, p_1}=Id_{\B_{X_{p_1}}}$ 
\item $i_{r, p_2}(A)=A+j_{\sigma_{2, 1}}(A)|(X_{p_1}\setminus X_{p_2})$
 for all $A\in \B_{X_{p_2}}$,
\item $i_{r, p_3}(A)=A+j_{\sigma_{3, 1}}(A)|(X_{p_1}\setminus X_{p_3})$
 for all $A\in \B_{X_{p_3}}$,
\end{itemize}
In particular 
$$i_{r, p_3}(A)i_{r, p_2}(j_{\sigma_{3, 2}}(A))=i_{r, p_1}(j_{\sigma_{3, 1}}(A))^2$$
for every $A\in \B_{X_{p_3}}.$
The element $r$ is called the amalgamation of $p_1, p_2, p_3$ of type 2.
\end{lemma}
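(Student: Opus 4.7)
The plan is to define $r$ directly by formulas, rather than by iterating Lemma~\ref{amal3}: since $|a_r|$ will exceed each $|a_{p_i}|$, the three $p_i$'s cannot be combined through successive two-element amalgamations, even though the two-element including amalgamation of Lemma~\ref{amal3} with $U = P_{X_{p_1}\setminus X_{p_2}}$ yields exactly the formula sought for $i_{r,p_2}$. Set $a_r = a_{p_1}\cup a_{p_2}\cup a_{p_3}$ and $n^r_\xi = n^{p_i}_\xi$ for $\xi\in a_{p_i}$ (well-defined on $\Delta$ by the convenient-position hypothesis), and let $Y = X_{p_1}\setminus X_{p_2} = X_{p_1}\setminus X_{p_3}$, the union of columns over $a_{p_1}\setminus\Delta$ (the two differences coincide because $a_{p_1}\cap a_{p_j} = \Delta$). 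Define the generators of $r$ by
\begin{gather*}
A^r_{\xi,m,n} = A^{p_1}_{\xi,m,n} \ \text{ for } \xi\in a_{p_1}, \\
A^r_{\xi,m,n} = A^{p_j}_{\xi,m,n} + j_{\sigma_{j,1}}(A^{p_j}_{\xi,m,n})|Y \ \text{ for } \xi \in a_{p_j}\setminus\Delta,\ j\in\{2,3\},
\end{gather*}
all viewed in $\B_{X_r}$ by zero extension; Lemma~\ref{root-equality} makes the two prescriptions agree on $\Delta$.

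Verification that $r\in\PP$ will reduce to condition (4) of Definition~\ref{def-P}: for $\xi\in a_{p_j}\setminus\Delta$, the correction term $j_{\sigma_{j,1}}(A^{p_j}_{\xi,m,n})|Y$ is supported on columns in $a_{p_1}\setminus\Delta$, all of which lie below $\xi$ by the convenient-position ordering, so the decomposition $A^r_{\xi,m,n}|\xi + 1_{\xi,m,n}$ is inherited from the one for $A^{p_j}_{\xi,m,n}$. To show $r\le p_i$, I put $i_{r,p_1} = Id$ and $i_{r,p_j}(A) = A + j_{\sigma_{j,1}}(A)|Y$ for $j=2,3$, and check these are $*$-embeddings. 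The central observation is that the two summands of $i_{r,p_j}(A)$ are supported on disjoint sets of columns ($a_{p_j}$ versus $a_{p_1}\setminus\Delta$), so in expanding $i_{r,p_j}(A)\cdot i_{r,p_j}(B)$ the cross terms $A\cdot(j_{\sigma_{j,1}}(B)|Y)$ and $(j_{\sigma_{j,1}}(A)|Y)\cdot B$ both vanish, while the diagonal term simplifies to $j_{\sigma_{j,1}}(AB)|Y$ because $j_{\sigma_{j,1}}(B)$ commutes with $P_Y$ (both $Y$ and its complement in $X_{p_1}$ being unions of whole columns, each invariant for elements of $\B_{X_{p_1}}$). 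Injectivity follows immediately from the disjointness of the column supports, and condition (d) of Definition~\ref{def-P} holds by direct inspection.

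For the product identity, I use the composition law $j_{\sigma_{3,1}} = j_{\sigma_{2,1}}\circ j_{\sigma_{3,2}}$ to rewrite $i_{r,p_2}(j_{\sigma_{3,2}}(A)) = j_{\sigma_{3,2}}(A) + j_{\sigma_{3,1}}(A)|Y$, and then expand $i_{r,p_3}(A)\cdot i_{r,p_2}(j_{\sigma_{3,2}}(A))$. Two of the four cross terms vanish by the same disjoint-column argument; the remaining two evaluate to $A\cdot j_{\sigma_{3,2}}(A) = A^2|\Delta$ (since $\sigma_{3,2}$ is the identity on $\Delta$, so $j_{\sigma_{3,2}}(A)|\Delta = A|\Delta$, while the portion of its range over $a_{p_2}\setminus\Delta$ is annihilated by $A\in\B_{X_{p_3}}$) and $(j_{\sigma_{3,1}}(A)|Y)^2 = j_{\sigma_{3,1}}(A^2)|Y$ by the column commutation. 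Their sum is the column decomposition of $j_{\sigma_{3,1}}(A^2) = j_{\sigma_{3,1}}(A)^2 = i_{r,p_1}(j_{\sigma_{3,1}}(A))^2$, which yields the identity. The main obstacle throughout is disciplined bookkeeping of which columns each operator piece is supported on; once the pairwise disjointness of $a_{p_1}\setminus\Delta$, $a_{p_2}\setminus\Delta$, $a_{p_3}\setminus\Delta$ is systematically exploited, every cross term in every relevant product collapses and the remaining computations are routine.
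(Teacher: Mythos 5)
Your construction is correct and produces exactly the same condition $r$ as the paper, but you arrive at it by a different route: you write down the generators and the maps $i_{r,p_j}$ explicitly and verify Definition \ref{def-P} from scratch, whereas the paper obtains $r$ by composing the amalgamation lemmas already established -- it forms the including amalgamations $s_2\leq p_1,p_2$ and $s_3\leq p_1,p_3$ of Lemma \ref{amal3} (with $U=P_{X_{p_1}\setminus X_{p_j}}$), observes that $s_2$ and $s_3$ are in the convenient position witnessed by $Id_{a_{p_1}}\cup\sigma_{3,2}$, and then takes their disjoint amalgamation via Lemma \ref{amal1}. This makes your opening justification wrong: the three conditions \emph{can} be combined by successive two-element amalgamations, because the intermediate objects $s_2$ and $s_3$ have supports of equal size and matching $n_\xi$'s, so the convenient-position hypothesis needed for the final disjoint amalgamation is available; the size obstruction you describe does not arise. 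What your direct approach buys is self-containedness and an explicit record of where each correction term is supported; what it costs is having to reverify that $r\in\PP$ and that the $i_{r,p_j}$ are $*$-embeddings, facts that in the paper come for free from Lemmas \ref{amal1} and \ref{amal3} and transitivity of $\leq_\PP$. Your verification of the product identity is the same column-support computation as the paper's one-line expansion, carried out in more detail, and it is correct. One point you should make explicit: for $\xi\in\Delta$ the formula $i_{r,p_j}(A^{p_j}_{\xi,m,n})=A^{p_j}_{\xi,m,n}+j_{\sigma_{j,1}}(A^{p_j}_{\xi,m,n})|Y$ must return $A^r_{\xi,m,n}=A^{p_1}_{\xi,m,n}$; this holds because Definition \ref{def-P} (4) confines $A^{p_1}_{\xi,m,n}$ to columns indexed by ordinals $\leq\xi$, all below $a_{p_1}\setminus\Delta$, so the correction term vanishes on $Y$, and Lemma \ref{root-equality} identifies the first summand -- but it is a check required by condition (c) of Definition \ref{def-P}, not an automatic consequence of your case split in the definition of the generators.
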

\begin{proof} 
First consider $s_2\leq p_1, p_2$ and $s_3\leq p_1, p_3$ which are
the including amalgamations of $p_1, p_2$ and $p_1, p_3$ as in Lemma \ref{amal3}.
It is clear that $s_1$ and $s_2$ are in the convenient position as witnessed by
$Id_{a_{p_1}}\cup \sigma_{3, 2}: a_{p_1}\cup a_{p_2}\rightarrow a_{p_1}\cup a_{p_3}$
Now let $r$ be the disjoint amalgamation of $s_1$ and $s_2$ as in Lemma \ref{amal1}.
The properties of $r$ follow from Lemma \ref{amal3} and Definition \ref{def-P}.

To prove the last statement of the lemma note that
$i_{r, p_3}(A)i_{r, p_2}(j_{\sigma_{3, 2}}(A))=\big(A+j_{\sigma_{3, 1}}(A)|(X_{p_1}\setminus X_{p_3})\big)
\big(j_{\sigma_{3, 2}}(A)+j_{\sigma_{3, 1}}(A)|(X_{p_1}\setminus X_{p_3})\big)=(j_{\sigma_{3, 1}}(A))^2=
i_{r, p_1}(j_{\sigma_{3, 1}}(A))^2$.
\end{proof}

\begin{lemma}\label{amaltype3}  Suppose that $p_1, p_2, p_3$  are distinct 
 elements in $\PP$ which are pairwise in the convenient position as  witnessed by 
$\sigma_{j, i}: a_{p_i}\to a_{p_j}$ for $1\leq i<j\leq 3$ 
such that $\Delta<a_{p_1}\setminus \Delta< a_{p_2}\setminus \Delta <a_{p_3}\setminus \Delta$
and $n_\xi^{p_i}=n$ for some $n>1$ and each $i\in \{1,2,3\}$ and that 
$v_1=(v^1_0, ...v^1_{n-1}), v_2=(v^2_0, ...v^2_{n-1})$ are two orthogonal unit vectors 
of $\C^n$.
  Then there is $r\leq p_1, p_2, p_3$ 
satisfying
\begin{itemize}
\item $a_r = a_{p_1}\cup a_{p_2}\cup a_{p_3}$;
\item $n_\xi^r = n_\xi^{p_i}=n$ if $\xi \in a_{p_i}$ 
for $1\leq i\leq 3$,
\item  $$\|[i_{r, p_m}(A), i_{r, p_1}(j_{\sigma_{m, 1}}(A))]\|=1/2,$$
for $m=2, 3$ and for every nonexpanding $A\in \B_{X_{p_m}}$ such that there is $\xi\in a_m\setminus a_{p_1}$
with $A(\sum_{k<n}v^1_ke_{\xi,k})=\sum_{k<n}v^1_ke_{\xi,k}$ and 
$A(\sum_{k<n}v^2_ke_{\xi,k})=0$.
\end{itemize}
The element $r$ is called the amalgamation of $p_1, p_2, p_3$ of type 3 for vectors $v_1$and $v_2$.
\end{lemma}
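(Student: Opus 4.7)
The plan is to perform two anticommuting amalgamations in the spirit of Lemma \ref{amal4}, one for the pair $(p_1,p_2)$ and one for $(p_1,p_3)$, with a coordinated choice of the internal partial isometry so that the two outputs remain in convenient position, and then to glue them by the disjoint amalgamation of Lemma \ref{amal1}.

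Fix a single orthonormal basis of $\C^n$ starting with $v_1,v_2$, and for each $\eta\in a_{p_1}\setminus\Delta$ let $V_\eta\in\B_{\{\eta\}\times[0,n)}$ be the unitary produced by Lemma \ref{one-half} (using this basis). Define $U\in\B_{X_{p_1}}$ to be the partial isometry with $U\mid\{\eta\}\times[0,n)=V_\eta$ for $\eta\in a_{p_1}\setminus\Delta$ and zero on the remaining columns. Since $a_{p_1}\cap a_{p_m}=\Delta$ for $m\in\{2,3\}$ and all dimensions are $n$,
$$UU^*=U^*U=P_{X_{p_1}\setminus X_{p_2}}=P_{X_{p_1}\setminus X_{p_3}}.$$
Apply Lemma \ref{amal3} with this specific $U$ to each of the convenient pairs $(p_1,p_2)$ and $(p_1,p_3)$, producing the $U$-including amalgamations $s_{12}\leq p_1,p_2$ and $s_{13}\leq p_1,p_3$. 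Exactly the calculation inside the proof of Lemma \ref{amal4} gives, for $m\in\{2,3\}$ and any nonexpanding $A\in\B_{X_{p_m}}$ meeting the vector hypothesis,
$$\big\|[\,i_{s_{1m},p_m}(A),\ i_{s_{1m},p_1}(j_{\sigma_{m,1}}(A))\,]\big\|=1/2.$$

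The crux is to verify that $s_{12}$ and $s_{13}$ are themselves in convenient position; take as witness the order-preserving bijection $\sigma\colon a_{p_1}\cup a_{p_2}\to a_{p_1}\cup a_{p_3}$ that is the identity on $a_{p_1}$ and agrees with $\sigma_{3,2}$ on $a_{p_2}\setminus a_{p_1}$. The order and dimension requirements are immediate from $\Delta<a_{p_1}\setminus\Delta<a_{p_2}\setminus\Delta<a_{p_3}\setminus\Delta$ and $n_\xi^{p_i}=n$. For the matrix-unit preservation $j_\sigma(A^{s_{13}}_{\sigma(\xi),k,l})=A^{s_{12}}_{\xi,k,l}$, the case $\xi\in a_{p_1}$ is trivial because the matrix units were untouched in passing to $s_{12},s_{13}$. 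For $\xi\in a_{p_2}\setminus a_{p_1}$, expand via Lemma \ref{amal3}
$$A^{s_{13}}_{\sigma(\xi),k,l}=U\,A^{p_1}_{\sigma_{3,1}^{-1}(\sigma(\xi)),k,l}\,U^*+A^{p_3}_{\sigma(\xi),k,l};$$
since $\sigma_{3,1}^{-1}\circ\sigma_{3,2}=\sigma_{2,1}^{-1}$ (uniqueness of the order-preserving bijection fixing $\Delta$) and $\sigma$ is the identity on $a_{p_1}$ (so $j_\sigma$ fixes $U$ and $A^{p_1}_{\sigma_{2,1}^{-1}(\xi),k,l}$), applying $j_\sigma$ and invoking the convenient position of $(p_2,p_3)$ through $j_{\sigma_{3,2}}(A^{p_3}_{\sigma_{3,2}(\xi),k,l})=A^{p_2}_{\xi,k,l}$ gives the desired identity.

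Finally, let $r$ be the disjoint amalgamation of $s_{12}$ and $s_{13}$ supplied by Lemma \ref{amal1}. Then $a_r=a_{p_1}\cup a_{p_2}\cup a_{p_3}$, $n_\xi^r=n$ throughout, $i_{r,s_{12}}=i_{r,s_{13}}=\mathrm{Id}$, hence $i_{r,p_1}=\mathrm{Id}$ and $i_{r,p_m}=i_{s_{1m},p_m}$ for $m\in\{2,3\}$. The commutator $[i_{r,p_m}(A),i_{r,p_1}(j_{\sigma_{m,1}}(A))]$ is supported inside $X_{s_{1m}}$ (the summand $A$ lives in $X_{p_m}$, the correction $Uj_{\sigma_{m,1}}(A)U^*$ lives on $X_{p_1}\setminus X_{p_m}\subseteq X_{s_{1m}}$), so its norm coincides with the value $1/2$ already computed inside $\B_{X_{s_{1m}}}$; the reverse bound $\leq 1/2$ is Stampfli's Corollary 2 of \cite{stampfli}. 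The one genuinely nontrivial point of the whole argument is the coordinated choice of $U$ in the opening step, without which $s_{12}$ and $s_{13}$ would fail to be in convenient position and the disjoint amalgamation step would not be available.
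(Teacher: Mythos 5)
Your proposal is correct and follows essentially the same route as the paper: the paper forms the two $(v_1,v_2)$-anticommuting amalgamations $s_2\leq p_1,p_2$ and $s_3\leq p_1,p_3$ via Lemma \ref{amal4}, observes they are in convenient position, and takes their disjoint amalgamation via Lemma \ref{amal1}. You merely unpack Lemma \ref{amal4} into its ingredients (Lemma \ref{one-half} plus the $U$-including amalgamation of Lemma \ref{amal3}) and spell out the coordinated choice of $U$ and the convenient-position check that the paper dismisses with ``it is clear''; this is a legitimate and indeed more careful rendering of the same argument.
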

\begin{proof} First consider $s_2\leq p_1, p_2$ and $s_3\leq p_1, p_3$ which are
the $(v_1, v_2)$-anti-commuting amalgamations of $p_1, p_2$ and $p_1, p_3$ as in Lemma \ref{amal4}.
It is clear that $s_1$ and $s_2$ are in the convenient position as witnessed by
$Id_{a_{p_1}}\cup \sigma_{3, 2}: a_{p_1}\cup a_{p_2}\rightarrow a_{p_1}\cup a_{p_3}$
Now let $r$ be the disjoint amalgamation of $s_1$ and $s_2$ as in Lemma \ref{amal1}.
The properties of $s_1$ and $s_2$ from the $(v_1, v_2)$-anti-commuting amalgamations $s_1$ and $s_2$
pass to $r$ by Definition \ref{def-P} (d).
\end{proof}

\subsection{Inductive limits of directed families in $\PP$}
In this section we adopt the terminology where a directed set is a partial
order $(X, \leq)$ where for any two $x, y\in X$ there is $z\in X$ such that $z\leq x, y$.
In this section we will consider inductive limits $\A^\GG$ of systems $( \A^p_{X_p}: p\in \GG)$ where $\GG\subseteq \PP$
is a directed subset of $\PP$ with the order $\leq=\leq_\PP$. Here  for $p\leq q$ the embeddings
$i_{pq}: \A^q_{X_q}\rightarrow \A^p_{X_p}$ are given by Definition \ref{def-P} (c), i.e., they
satisfy $i_{pq}(A^q_{\xi, m, n})=A^p_{\xi, m, n}$ for $\xi \in a_q$ and $n, m\in [0, n_\xi^q)$.
Formally we define $\A^\GG$ differently in order to work with its convenient representation
in $\B_{\omega_1\times\N}$ but then, in Lemma \ref{inductive-limit}
 we prove that the constructed algebra is the corresponding inductive limit.
 
 \begin{definition} We say that $\GG\subseteq \PP$ is covering if and only if
 $$\omega_1\times\N\subseteq \bigcup\{X_p: p\in \GG\}.$$
 \end{definition}

\begin{definition}\label{def-A-xi-G} Suppose that $\GG\subseteq \PP$ is directed and covering. Then 
$A^\GG_{\xi, n, m}\in \B_{\omega_1\times\N}$ is given by
$$\langle A^\GG_{\xi, n, m}(e_{\eta, k}), e_{\eta, l}\rangle
=\langle A^p_{\xi, n, m}(e_{\eta, k}), e_{\eta, l}\rangle$$
for any (all) $p\in \GG$ such $(\eta, k), (\eta, l), (\xi, n), (\xi, m)\in X_p$.
\end{definition}

Note that $A^\GG_{\xi, n, m}$ are well-defined if $\GG$ is directed and covering. This is because
given two $p, p'\in \GG$ such that $(\eta, k), (\eta, l), (\xi, n), (\xi, m)\in X_p, X_{p'}$ 
there is $q\leq p, p'$ which implies that  $X_p, X_{p'}\subseteq X_q$ and so
$$\langle A^p_{\xi, n, m}(e_{\eta, k}), e_{\eta, l}\rangle
=\langle A^q_{\xi, n, m}(e_{\eta, k}), e_{\eta, l}\rangle=
\langle A^{p'}_{\xi, n, m}(e_{\eta, k}), e_{\eta, l}\rangle$$ by Definition \ref{def-P} (c-d). 
The following definition is parallel to Definition \ref{def-A-p}:

\begin{definition}\label{def-A-G} Suppose that $\GG\subseteq \PP$ is directed and covering. 
$\A^\GG$ is  the subalgebra of $\B_{\omega_1\times\N}$ 
 generated by the operators
$A_{\xi, m, n}^\GG$  for all $\xi\in \omega_1$ and $m, n\in \N$.

   Let $X$ be a subset of $\omega_1\times \mathbb{N}$. 
We define $\A_X^\GG$ to be the $C^*$-subalgebra of 
$\A^\GG$ generated by $(A_{\xi, m, n}^\GG: (\xi,n), (\xi, m)\in X)$.
In particular, for every $\alpha<\omega_1$, 
by $\A_\alpha^\GG$ we mean  the $C^*$-subalgebra of 
$\A^\GG$ generated by $\{A_{\xi,m,n}^\GG: \xi<\alpha, m,n \in \N\}$.

\end{definition}

\begin{lemma}\label{restriction1}
Suppose that $\GG\subset\PP$ is directed and covering and $p\in \GG$.
 There is a $*$-embedding $i_{\GG, p}: \A^p_{X_p}\to \A_{X_p}^\GG$
such that 
\begin{enumerate}
\item $i_{\GG, p}(A_{\xi, m, n}^p)=A_{\xi, m, n}^\GG$ and
\item $i_{\GG, p}(A_{\xi, m, n}^p)|X_p=A_{\xi, m, n}^p$
\end{enumerate} 
for every $\xi, n, m$ such that  $(\xi,n), (\xi, m)\in X$.
\end{lemma}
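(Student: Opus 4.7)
\emph{Here is the plan.} The target map $i_{\GG,p}$ must extend, to all of $\A^p_{X_p}=\B_{X_p}$ (the equality being Lemma~\ref{generating-B}), the assignment $A^p_{\xi,n,m}\mapsto A^\GG_{\xi,n,m}$. Since $\B_{X_p}$ is a finite-dimensional C*-algebra, no norm-closing is needed, and the problem is purely algebraic. My approach will be to construct $i_{\GG,p}(B)$ for $B\in\B_{X_p}$ column by column, exploiting that each column is invariant for every operator in a $\B_X$ and is controlled by a central projection in $\B_{X_r}$. The three tools driving the construction will be: (i) the covering of $\GG$ (every $\eta<\omega_1$ lies in some $a_q$, $q\in\GG$), (ii) the directedness of $\GG$ (common refinements below $p$ exist), and (iii) the restriction identity $i_{sr}(A)|X_r=A$ from Definition~\ref{def-P}(d), which is what enforces coherence between different choices.

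\emph{Construction.} For $B\in\B_{X_p}$ and $\eta<\omega_1$, pick $q\in\GG$ with $\eta\in a_q$ and then $r\in\GG$ with $r\leq p,q$, and set
$$i_{\GG,p}(B)|\{\eta\}:=\bigl(i_{rp}(B)\bigr)|\{\eta\},$$
declaring $i_{\GG,p}(B)$ to vanish on all off-column entries. To show this is independent of $r$, I will take a second choice $r'$, refine to $s\leq r,r'$ in $\GG$, and use $i_{sp}=i_{sr}\circ i_{rp}=i_{sr'}\circ i_{r'p}$ together with Definition~\ref{def-P}(d), which forces $i_{sr}(i_{rp}(B))|X_r=i_{rp}(B)$ and hence the same equality after further restricting to $\{\eta\}\subseteq X_r$. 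Since every $i_{rp}$ is an isometric *-embedding, $\|i_{\GG,p}(B)\|=\sup_\eta\|(i_{rp}(B))|\{\eta\}\|\leq\|B\|$, so $i_{\GG,p}(B)$ is a well-defined bounded operator on $\ell_2(\omega_1\times\N)$.

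\emph{Verification.} For each fixed $\eta$, the map $B\mapsto i_{\GG,p}(B)|\{\eta\}$ is the composition of the *-embedding $i_{rp}\colon\B_{X_p}\to\B_{X_r}$ with compression by the central projection onto the column in $\B_{X_r}$, hence a *-homomorphism; assembling across the orthogonal columns yields a *-homomorphism $i_{\GG,p}\colon\B_{X_p}\to\B_{\omega_1\times\N}$. Property (1) follows by taking $B=A^p_{\xi,n,m}$: Definition~\ref{def-P}(c) gives $i_{rp}(A^p_{\xi,n,m})=A^r_{\xi,n,m}$, which matches $A^\GG_{\xi,n,m}$ column by column by Definition~\ref{def-A-xi-G}; in particular the range lies in $\A^\GG_{X_p}$. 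Property (2) follows column-wise from Definition~\ref{def-P}(d) applied to $r\leq p$, and by linearity and multiplicativity it extends to $i_{\GG,p}(B)|X_p=B$ for every $B\in\B_{X_p}$; hence restriction to $X_p$ is a left inverse of $i_{\GG,p}$, proving injectivity. The only delicate step, and thus the main obstacle, is the column-wise coherence of the construction, which is exactly what Definition~\ref{def-P}(d) was built to deliver; the rest is a bookkeeping exercise with the defining clauses of $\leq_\PP$.
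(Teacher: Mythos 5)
Your overall strategy (coherence via Definition \ref{def-P}(d), injectivity via the left inverse $B\mapsto i_{\GG,p}(B)|X_p$) is in the same spirit as the paper's very terse proof, but the construction itself has a genuine gap: fixing a \emph{single} $r\leq p,q$ per column $\eta$ and setting $i_{\GG,p}(B)|\{\eta\}:=(i_{rp}(B))|\{\eta\}$ is not well defined, and your independence argument does not establish what it claims. For $s\leq r$, Definition \ref{def-P}(d) yields only $i_{sp}(B)|X_r=i_{rp}(B)$, i.e., that $(i_{rp}(B))|\{\eta\}$ is the \emph{truncation} of $(i_{sp}(B))|\{\eta\}$ to $\{\eta\}\times[0,n^r_\eta)$ --- not that the two column operators are equal. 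In this construction they genuinely differ: the extension $i(A)=A+i_r(A)$ of Lemma \ref{density2} and the $U$-including amalgamations of Lemma \ref{amal3} insert new nonzero entries into \emph{old} columns at positions $\geq n^r_\eta$, so $(i_{sp}(B))|\{\eta\}$ can strictly extend $(i_{rp}(B))|\{\eta\}$. For the same reason $A^\GG_{\xi,m,n}|\{\eta\}$ need not have finite support, so any single-$r$ truncation also fails to equal $A^\GG_{\xi,m,n}$ and your verification of property (1) breaks down.

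The repair is to define $i_{\GG,p}(B)$ entry by entry, choosing for each pair $(\eta,k),(\eta,l)$ some $r\leq p$ with both indices in $X_r$ (well defined by exactly the argument following Definition \ref{def-A-xi-G}); equivalently, take the SOT limit of the net $(i_{rp}(B))_{r\in\GG,\ r\leq p}$, which converges because on each basis vector the images form a nested family of truncations with uniformly bounded norms. Multiplicativity is then not automatic from entrywise agreement (a product entry is an infinite sum along a column); one needs the joint SOT continuity of multiplication on bounded sets (Lemma \ref{continuous}), or the observation that every $*$-polynomial relation among the $A^p_{\xi,m,n}$ is preserved by each $i_{rp}$ and passes to the SOT limit. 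Two smaller points: property (2) for arbitrary $B$ should be read off directly from Definition \ref{def-P}(d), which holds for all of $\B_{X_q}$ and not only for generators --- the compression $B\mapsto i_{\GG,p}(B)|X_p$ is not a priori multiplicative, so ``extending by linearity and multiplicativity'' is not a valid step; once (2) is in place, your left-inverse argument for injectivity is correct and is a clean version of the paper's remark that the kernels of the maps $i_{q,p}$ are null.
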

\begin{proof} By Definitions \ref{def-P} and \ref{def-A-xi-G},
a map sending $A_{\xi, m, n}^p$ to $A_{\xi, m, n}^\GG$ extends 
to a $*$-homomorphism of $\A^p_{X_p}$ into $\A^\GG_{X_p}$.
Its kernel must be  null as the kernels of $i_{q, p}$ for $q\leq p$ are null.

To prove the second part of the lemma, use the first part and
Definition \ref{def-A-xi-G}.
\end{proof}

\begin{lemma}\label{inductive-limit} Suppose that $\GG\subseteq \PP$ is directed and covering.
There is a $*$-isomorphism $j$ of $\A^\GG$ and the inductive limit $lim_{p\in \GG}\A^p_{X_p}$
of the system $(\A^p_{X_p}: p\in \GG)$ with maps $(i_{p, q}: p\leq q)$ such that
$$j(A^\GG_{\xi, n, m})=\lim_{p\in \GG}A^p_{\xi, n, m}$$
for each $\xi\in \omega_1$ and $m, n\in \N$. 
\end{lemma}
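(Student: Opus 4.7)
The plan is to invoke the universal property of the inductive limit, using the family $(i_{\GG,p})_{p\in\GG}$ of $*$-embeddings from Lemma \ref{restriction1} as the natural cocone into $\A^\GG$. First I would check that these embeddings are compatible with the structural maps of the inductive system: for $p\leq q$ in $\GG$, the two $*$-homomorphisms $i_{\GG,q}$ and $i_{\GG,p}\circ i_{p,q}$ from $\A^q_{X_q}$ into $\A^\GG$ must agree on each generator $A^q_{\xi,m,n}$, since both send it to $A^\GG_{\xi,m,n}$ by Definition \ref{def-P}(c) together with Lemma \ref{restriction1}(1); as $\A^q_{X_q}$ is generated as a C*-algebra by these elements (Definition \ref{def-A-p}), the two $*$-homomorphisms coincide on all of $\A^q_{X_q}$.

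The universal property of inductive limits then produces a unique $*$-homomorphism $\psi:\lim_{p\in\GG}\A^p_{X_p}\to\A^\GG$ satisfying $\psi\circ\phi_p=i_{\GG,p}$ for every $p\in\GG$, where $\phi_p$ denotes the canonical map of $\A^p_{X_p}$ into the inductive limit. In particular $\psi(\lim_{p\in\GG}A^p_{\xi,n,m})=A^\GG_{\xi,n,m}$ for every $\xi<\omega_1$ and $m,n\in\N$, so once $\psi$ is shown to be bijective, setting $j:=\psi^{-1}$ gives the required isomorphism.

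For surjectivity, the image of $\psi$ is a closed $*$-subalgebra of $\A^\GG$ (the image of any $*$-homomorphism between C*-algebras is automatically closed), and since $\GG$ is covering, for every $\xi,n,m$ there exists $p\in\GG$ with $(\xi,n),(\xi,m)\in X_p$, so the image contains $\psi(\phi_p(A^p_{\xi,n,m}))=A^\GG_{\xi,n,m}$. As $\A^\GG$ is generated by exactly these elements, surjectivity follows. For injectivity, I would use that all structural maps $i_{p,q}$ are $*$-embeddings (Definition \ref{def-P}(c)), hence isometries, so the canonical maps $\phi_p$ are themselves isometric; and because $\GG$ is directed, the union $\bigcup_{p\in\GG}\phi_p(\A^p_{X_p})$ is a norm-dense $*$-subalgebra of $\lim_{p\in\GG}\A^p_{X_p}$. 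On each piece $\phi_p(\A^p_{X_p})$ the map $\psi$ equals $i_{\GG,p}\circ\phi_p^{-1}$, which is an isometry by Lemma \ref{restriction1}; thus $\psi$ is isometric on a dense subalgebra and extends to an isometry of the whole limit.

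The only mildly delicate point is the coherence verification in the first step, but it reduces to checking agreement on the distinguished generators; after that the argument is the standard machinery for inductive limits of injective systems of C*-algebras.
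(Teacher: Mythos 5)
Your proposal is correct and follows essentially the same route as the paper: the paper reduces the whole lemma to the commutativity of the square $i_{\GG,p}\circ i_{p,q}=i_{\GG,q}$ checked on the generators $A^q_{\xi,m,n}$ (citing Exercise 1, Chapter 6 of Murphy for the rest), and your coherence verification is exactly that check. The surjectivity and injectivity arguments you spell out are the content of the cited exercise, so you have simply unpacked what the paper leaves as a reference.
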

\begin{proof}
As in Ex. 1. Chapter 6 of \cite{murphy} it is enough to prove that 
for every $ p, q\in \GG$ satisfying $p\leq q$ the diagram
\[\begin{tikzcd}
\A^q_{X_q} \arrow{r}{i_{\GG, q}} \arrow[swap]{d}{i_{p, q}} & A_{X_q}^\GG \arrow{d}{\subseteq} \\
\A^p_{X_p}\arrow{r}{i_{\GG, p}} & A_{X_p}^\GG
\end{tikzcd}
\]
commutes. This follows from the fact that by Definition \ref{def-A-xi-G} we have
$i_{\GG, p}(i_{p, q}(A_{\xi, n, m}^q))=\A^\GG_{\xi, n, m}=i_{\GG, q}(A^q_{\xi, n, m})$
for $\xi, m, n$ such that $(\xi, m), (\xi, n)\in X_q$. But these elements
generate $\A^q_{X_q}$.
\end{proof}

\begin{definition}\label{def-F-rich} A family $\GG\subseteq \PP$ is 
called $\FF$-rich if and only if $\GG$ is directed, covering 
and $\FF_{X, \alpha}\cap\GG \not=\emptyset$ for every finite $X\subseteq\omega_1\times\N$ and
$\alpha\in X$, where $\FF_{X, \alpha}$s are defined in Lemma \ref{density2}.
\end{definition}

\begin{lemma}\label{quotientembedding}
Let $\GG\subseteq \PP$ be an $\FF$-rich family. Then for every $\alpha<\omega_1$
the following hold:
\begin{enumerate}
\item $\A_\alpha^\GG$ is an ideal of $\A^\GG$ equal to $\{A\in \A^\GG: A|[\alpha, \omega_1)=0\}$,
\item there is a *-isomorphism $j_\alpha: \A^\GG/\A_\alpha^\GG\to \A^\GG|[\alpha, \omega_1)$,
\item the representation $\pi_\alpha: \A^\GG|[\alpha, \omega_1)\to \A^\GG|\{\alpha\}$ 
given by $\pi_\alpha(A)=A|\{\alpha\}$ is faithful.
\end{enumerate}

\end{lemma}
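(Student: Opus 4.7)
My plan is to derive all three conclusions from a single norm identity, which only uses the directedness and covering of $\GG$; full $\FF$-richness enters only in Part~(3).

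The key claim I would first establish is that for every $p\in\GG$, every $B\in\A^p_{X_p}$, and every $\alpha<\omega_1$,
$$\|i_{\GG,p}(B)|[\alpha,\omega_1)\| = \|B|[\alpha,\omega_1)\|.$$
For $\geq$, Lemma~\ref{restriction1}(2) gives $i_{\GG,p}(B)|X_p = B$, so further restriction to $X_p\cap[\alpha,\omega_1)$ recovers $B|[\alpha,\omega_1)$. For $\leq$, since columns are invariant under $\A^\GG$, one can write
$$\|i_{\GG,p}(B)|[\alpha,\omega_1)\| = \sup_{\eta\geq\alpha}\sup_{N\subseteq\N\ \text{finite}}\|i_{\GG,p}(B)P_{\{\eta\}\times N}\|.$$
For each such $\eta, N$, directedness and covering of $\GG$ produce $q\leq p$ in $\GG$ with $\{\eta\}\times N\subseteq X_q$, and the commutative diagram from the proof of Lemma~\ref{inductive-limit} yields $i_{\GG,p}(B) = i_{\GG,q}(i_{q,p}(B))$. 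Applying Lemma~\ref{restriction1}(2) to $i_{q,p}(B)\in\A^q_{X_q}$ gives $i_{\GG,p}(B)P_{\{\eta\}\times N} = i_{q,p}(B)P_{\{\eta\}\times N}$, whose norm is at most $\|i_{q,p}(B)|[\alpha,\omega_1)\| = \|B|[\alpha,\omega_1)\|$ by Lemma~\ref{norms-of-restrictions}.

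For Part~(1), the inclusion $\A^\GG_\alpha\subseteq\{A\in\A^\GG: A|[\alpha,\omega_1)=0\}$ is straightforward because each generator $A^\GG_{\xi,m,n}$ with $\xi<\alpha$ is supported on columns $\leq\xi$ by Definition~\ref{def-P}(4), and this property passes to $*$-polynomials and norm closure. For the reverse inclusion, take $A\in\A^\GG$ with $A|[\alpha,\omega_1)=0$ and approximate $A = \lim_k i_{\GG,p_k}(B_k) =: \lim_k A_k$; using $\FF$-richness and directedness I arrange $\alpha\in a_{p_k}$ for every $k$. Inside $\B_{X_{p_k}}=\A^{p_k}_{X_{p_k}}$, split $B_k = B_k|\alpha + B_k|[\alpha,\omega_1)$; the first summand lies in $\A^{p_k}_{X_{p_k}\cap(\alpha\times\N)}$ by Lemma~\ref{generating-B}, which is generated by the $A^{p_k}_{\xi,m,n}$ with $\xi<\alpha$, so $i_{\GG,p_k}(B_k|\alpha)\in\A^\GG_\alpha$. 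The key identity and the isometry of $i_{\GG,p_k}$ yield
$$\|A - i_{\GG,p_k}(B_k|\alpha)\| \leq \|A - A_k\| + \|B_k|[\alpha,\omega_1)\| = \|A-A_k\| + \|A_k|[\alpha,\omega_1)\| \to 0,$$
so $A\in\A^\GG_\alpha$. The ideal property of $\A^\GG_\alpha$ then follows for free, as it coincides with the kernel of the $*$-homomorphism $A\mapsto A|[\alpha,\omega_1)$ (right-multiplication by $P_{[\alpha,\omega_1)\times\N}$ is a $*$-homomorphism on $\A^\GG$ because columns are invariant).

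Part~(2) is the first isomorphism theorem applied to that $*$-homomorphism, whose kernel is $\A^\GG_\alpha$ by Part~(1) and whose image is $\A^\GG|[\alpha,\omega_1)$ by definition. For Part~(3) it suffices to prove $\|A|\{\alpha\}\|\geq\|A|[\alpha,\omega_1)\|$ for every $A\in\A^\GG$, which forces $\pi_\alpha$ to be isometric and hence faithful. Approximating $A = \lim_k i_{\GG,p_k}(B_k)$ and now using $\FF$-richness crucially to arrange $p_k\in\FF_{X_{p_k},\alpha}$ gives $\|B_k|\{\alpha\}\|\geq\|B_k|[\alpha,\omega_1)\|$ from the defining property of $\FF_{X_{p_k},\alpha}$. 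Combining with the key identity (which gives $\|i_{\GG,p_k}(B_k)|[\alpha,\omega_1)\| = \|B_k|[\alpha,\omega_1)\|$) and the lower bound $\|i_{\GG,p_k}(B_k)|\{\alpha\}\|\geq\|B_k|\{\alpha\}\|$ (immediate from Lemma~\ref{restriction1}(2) on the subset $\{\alpha\}\times[0,n^{p_k}_\alpha)\subseteq X_{p_k}$), then passing to the limit, yields the desired inequality. The main technical obstacle I foresee is the upper bound in the key identity, which requires careful coordination among the three embeddings $i_{\GG,p}, i_{\GG,q}, i_{q,p}$ together with strong-operator approximation of column projections by finite-dimensional ones.
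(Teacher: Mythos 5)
Your proposal is correct and follows essentially the same route as the paper: generators with $\xi<\alpha$ vanish on $[\alpha,\omega_1)$ for one inclusion, the approximation $\|A-i_{\GG,p}(B|\alpha)\|\le 2\varepsilon$ via the norm identity $\|i_{\GG,p}(B)|[\alpha,\omega_1)\|=\|B|[\alpha,\omega_1)\|$ for the other, the first isomorphism theorem for (2), and $\FF$-richness together with Lemma \ref{open-dense} and density for (3). The only difference is presentational: you spell out a proof of that norm identity for the limit embeddings (via suprema over finite column compressions), where the paper simply invokes Lemma \ref{norms-of-restrictions} together with Definition \ref{def-A-xi-G}.
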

\begin{proof}
As $\ell_2(\{\xi\}\times\N)$ are $\A^\GG$-invariant, it is clear that sending
$A\in \A^\GG$ to $A|[\alpha, \omega_1)$ is a $*$-homomorphism. So for (1) and (2) we are left
with proving that its kernel is equal to $\A^\GG_\alpha$.

First note that the kernel contains every
generator $A^\GG_{\xi, n, m}$ for $\xi<\alpha$ and $m, n\in \N$ of $\A_\alpha^\GG$
and so includes $\A_\alpha^\GG$. 
This is true by Definition \ref{def-P} (4).

For the other inclusion let $A\in \A^\GG$ satisfy $A|[\alpha, \omega_1)=0$. Since $\A^\GG$ is the inductive
limit of $\A^p_{X_p}$s for
$p\in \GG$ by Lemma \ref{inductive-limit}, for every $\varepsilon>0$ there is $p\in \GG$ and $B\in \A^p_{X_p}$ such that
$\|i_{\GG, p}(B)-A\|<\varepsilon$ and so $\|i_{\GG, p}(B)|[\alpha, \omega_1)\|<\varepsilon$. 
By Lemma \ref{generating-B}, $B|\alpha\in \A^p_{X_p\cap(\alpha\times\N)}\subseteq
\A^p_{X_p}$ and $B|[\alpha, \omega_1)\in \A^p_{X_p}$, so we can apply $i_{\GG, p}$ to them.
By Lemma \ref{norms-of-restrictions} and Definition \ref{def-A-xi-G} we have that
$$\|i_{\GG, p}(B|[\alpha,\omega_1))\|=\|i_{\GG, p}(B)|[\alpha,\omega_1)\|.$$
So we have
$$\|A-i_{\GG, p}(B|\alpha)\|=\|A-i_{\GG, p}(B)+i_{\GG,p}(B|[\alpha,\omega_1))\|\leq$$
$$\leq \|A-i_{\GG, p}(B)\|+\|i_{\GG, p}(B)|[\alpha,\omega_1)\|\leq 2\varepsilon.$$
But $i_{\GG, p}(B|\alpha)\in \A^\GG_\alpha$ since $B|\alpha\in \A^p_{X_p\cap(\alpha\times\N)}$.
 As $\varepsilon>0$ was arbitrary
and $i_{\GG, p}(B|\alpha)\in \A_\alpha^\GG$,
we conclude that $A\in \A_\alpha^\GG$, completing the proof of (1) and (2).

To prove (3) first note that since $\ell_2(\{\alpha\}\times\omega_1)$ is $\A^\GG$-invariant, 
it is clear that
$\pi_\alpha$ is a representation of $\A^\GG|[\alpha, \omega_1)$.  Now suppose that
$A\in\A^\GG_{X_q}$ for $q\in\GG$. By Lemma \ref{restriction1} there is $B\in \A^q_{X_q}$
such that $i_{\GG, q}(B)=A$.
 Since $\GG$ is assumed to be $\FF$-rich, by Lemmas \ref{density2} and \ref{open-dense}
 there is $p\in \FF_{X_q, \alpha}$ such that $p\leq q$. 
By Lemma \ref{norms-of-restrictions} and Definition \ref{def-A-xi-G}
we have $\|A|[\alpha,\omega_1)\|=\|i_{p,q}(B)|[\alpha,\omega_1)\|$.
By the fact that $p\in \FF_{X_q, \alpha}$  we have that
$$\|A|\{\alpha\}\|\geq\|i_{p,q}(B)|\{\alpha\}\|\geq\|i_{p,q}(B)|[\alpha,\omega_1)\|
=\|A|[\alpha,\omega_1)\|.$$
This shows that $\pi_\alpha$ is an isometry when restricted to
$\bigcup_{q\in \GG}\A^\GG_{X_q}|[\alpha,\omega_1)$ which is dense in $\A^\GG$ by Lemma \ref{inductive-limit},
and so the representation is faithful.
\end{proof}

\begin{proposition}\label{thintallfully} 
Suppose that $\GG\subseteq \PP$ is an $\FF$-rich family. Then 
$\A^\GG$ is a scattered thin-tall fully noncommutative C*-algebra
such that 
\begin{enumerate}
\item $\I^{At}_\alpha(\A^\GG)=\A_\alpha^\GG$, 
\item there is a $*$-isomorphism $j_\alpha: 
 \A^\GG/\I^{At}_\alpha(\A^\GG) \rightarrow \A^\GG|[\alpha, \omega_1)$
satisfying 
$$j_\alpha([A]_{\I^{At}_\alpha(\A)})=A|[\alpha, \omega_1),$$
\item the collection $\{[A_{\alpha, m, n}]_{\I^{At}_\alpha(\A)}: n, m\in \N\}$ satisfies the
matrix units relations and generates the essential ideal $At(\A^\GG/\I^{At}_\alpha(\A^\GG))$.
\end{enumerate}
\end{proposition}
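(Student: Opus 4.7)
The plan is to prove the identity $\I^{At}_\alpha(\A^\GG)=\A^\GG_\alpha$ from item (1) by transfinite induction on $\alpha\le\omega_1$; items (2) and (3) as well as the three adjectives (scattered, thin-tall, fully noncommutative) will then follow quickly from Lemma \ref{quotientembedding} and Proposition \ref{atoms}(4). The key preliminary observation, read off Definitions \ref{def-P}(4) and \ref{def-A-xi-G}, is that $A^\GG_{\xi,m,n}|\{\xi\}=1_{\xi,m,n}$ while $A^\GG_{\xi,m,n}|\{\eta\}=0$ whenever $\eta>\xi$. In particular, every generator of $\A^\GG_\alpha$ vanishes on $[\alpha,\omega_1)$, so Lemma \ref{quotientembedding}(1) already tells us that $\A^\GG_\alpha$ is an ideal. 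The base $\alpha=0$ is trivial, and the limit step is immediate from the continuity clause of Theorem \ref{theorem1}(2) together with $\A^\GG_\alpha=\overline{\bigcup_{\beta<\alpha}\A^\GG_\beta}$ (since the generators of $\A^\GG_\alpha$ are distributed among the lower $\A^\GG_\beta$'s).

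For the successor step, assume $\I^{At}_\alpha=\A^\GG_\alpha$. By the recursive clause in Theorem \ref{theorem1}(2), it suffices to show that $\I^{At}(\A^\GG/\A^\GG_\alpha)=\A^\GG_{\alpha+1}/\A^\GG_\alpha$. I would transport this equation to the concrete realization $\A^\GG|\{\alpha\}$ using the composition of *-isomorphisms $\pi_\alpha\circ j_\alpha$ from Lemma \ref{quotientembedding}(2),(3). Under this transport, the image of $\A^\GG_{\alpha+1}/\A^\GG_\alpha$ is the C*-subalgebra of $\A^\GG|\{\alpha\}$ generated by $\{1_{\alpha,m,n}:m,n\in\N\}$, which is precisely $\mathcal{K}(\ell_2(\{\alpha\}\times\N))$. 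Because the compact operators form an essential ideal of every intermediate subalgebra $\mathcal{K}(\ell_2)\subseteq\mathcal{B}\subseteq\B(\ell_2)$, this copy is an essential ideal of $\A^\GG|\{\alpha\}$, and Proposition \ref{atoms}(4) identifies it with $\I^{At}(\A^\GG|\{\alpha\})$. Pulling back through $\pi_\alpha^{-1}$ and $j_\alpha^{-1}$ closes the induction.

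With (1) in hand, item (2) is immediate from Lemma \ref{quotientembedding}(2). For (3), the $1_{\alpha,m,n}$ satisfy the matrix unit relations on $\ell_2(\{\alpha\}\times\N)$, and these relations are preserved under the *-isomorphisms of Lemma \ref{quotientembedding}, so their preimages $[A^\GG_{\alpha,m,n}]_{\I^{At}_\alpha}$ satisfy them in the quotient; that they generate the atomic ideal is exactly what the successor step proved. Scatteredness follows from Theorem \ref{theorem1}(2) together with $\A^\GG_{\omega_1}=\A^\GG$; thin-tallness from each subquotient $\I^{At}_{\alpha+1}/\I^{At}_\alpha\cong\mathcal{K}(\ell_2)$ being separable; and fully noncommutativity from the same identification via the definition of Section 2.1. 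The one subtle point to check is that contributions to column $\alpha$ coming from generators $A^\GG_{\xi,m,n}$ with $\xi>\alpha$ (which can well be nonzero) do not produce minimal projections outside $\mathcal{K}(\ell_2(\{\alpha\}\times\N))$, and this is exactly what the essentiality of $\mathcal{K}$ combined with Proposition \ref{atoms}(4) rules out.
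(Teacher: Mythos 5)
Your proof is correct and follows essentially the same route as the paper's: induction on $\alpha$, using the faithful column representation $\pi_\alpha\circ j_\alpha$ from Lemma \ref{quotientembedding} to identify $\A^\GG_{\alpha+1}/\A^\GG_\alpha$ with the compacts on $\ell_2(\{\alpha\}\times\N)$ generated by the matrix units $1_{\alpha,m,n}$, invoking essentiality and Proposition \ref{atoms}(4) to conclude this is the atomic ideal, and handling limits by continuity. The only cosmetic difference is that you phrase the essentiality argument directly for intermediate algebras $\mathcal K\subseteq\mathcal B\subseteq\B(H)$, while the paper routes it through the faithfulness of $\pi_\alpha$; these are the same observation.
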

\begin{proof} By Theorem 1.4 of \cite{cantor-bendixson}
it is enough to prove (1) - (3) to conclude that $\A$ is a scattered thin-tall fully noncommutative 
C*-algebra.

The proof of (1) - (3) is by induction on $\alpha<\omega_1$. 
For $\alpha=0$ we have that $\I_\alpha=\{0\}$ and  so (1)  and (2) are trivial. Also
$A^\GG_{0, n, m}=1_{0, n, m}$ by Definition \ref{def-P}, so these elements 
satisfy the matrix unit relations. Moreover they generate the algebra of all
compact operators on $\ell_2(\{0\}\times\N)$ which is an essential ideal
in $\B_{\{0\}\times\N}$. Since $\pi_0$ from Lemma \ref{quotientembedding}
is faithfull,  the collection $\{A_{0, m, n}: n, m\in \N\}$ 
generates an essential ideal isomorphic to an algebra of all compact operators 
on a Hilbert space, so by Theorem 1.2 (4) of \cite{cantor-bendixson} this
ideal is $\I^{At}(A^\GG)$ as required.

Now suppose we are done for
$\beta<\alpha<\omega_1$. 

(1)
If $\alpha$ is a limit ordinal, then by 1.4 of \cite{cantor-bendixson} and the inductive hypothesis we have
$$I_\alpha^{At}(\A)={\overline{\bigcup_{\beta<\alpha}I_\beta^{At}(\A)}}=
{\overline{\bigcup_{\beta<\alpha}\A_\beta}}=\A_\alpha.$$
If $\alpha=\beta+1$, then (3) of the inductive hypothesis implies (1).

(2) follows from Lemma \ref{quotientembedding}.

(3)  Is proved like in the case $\alpha=0$.

\end{proof}

\section{An operator algebra along a  construction scheme}

In this section we
adopt the terminology and the notation of 
Section 5. We will use the constructions scheme of \cite{stevo-scheme} described in Section 2.2
to build appropriate $\FF$-rich families $\GG$ in the partial order $\PP$ of approximations whose
inductive limit $\A^\GG$ will have interesting properties described in the introduction.
To prove the main theorem of this section we need one more general lemma:

\begin{lemma}\label{af-projections}
Suppose that $\A$ is an AF C*-algebra where $\{\A_D: D\in \mathcal D\}$ is
 a directed family of finite-dimensional subalgebras with dense union. Let
$P\in \A$ be a projection. Then for every $0<\varepsilon<1$ there is $D\in \mathcal D$ and a  projection
$Q\in \A_D$ such that $\|Q-P\|<\varepsilon$.
\end{lemma}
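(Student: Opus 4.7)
The plan is to approximate $P$ by a self-adjoint element of some $\A_D$ and then apply continuous functional calculus to turn the approximation into an honest projection lying in the same finite-dimensional subalgebra.

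More precisely, fix $0<\delta<1/2$ with $2\delta<\varepsilon$ (the exact choice being determined at the end). Since $\bigcup_{D\in\mathcal D}\A_D$ is norm dense in $\A$, there exist $D\in\mathcal D$ and $A\in\A_D$ with $\|A-P\|<\delta$. Replacing $A$ by its self-adjoint part $B=(A+A^*)/2\in\A_D$ and using that $P=P^*$, I get $\|B-P\|\leq\|A-P\|<\delta$ with $B$ self-adjoint.

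Next I use the fact that for self-adjoint elements $B,P$ with $\|B-P\|<\delta$ one has $\sigma(B)\subseteq\{t\in\R: \mathrm{dist}(t,\sigma(P))<\delta\}$. Since $P$ is a projection, $\sigma(P)\subseteq\{0,1\}$, so
\[\sigma(B)\subseteq(-\delta,\delta)\cup(1-\delta,1+\delta).\]
As $\delta<1/2$, the function $f:\sigma(B)\to\{0,1\}$ given by $f(t)=0$ on $(-\delta,\delta)$ and $f(t)=1$ on $(1-\delta,1+\delta)$ is continuous on $\sigma(B)$. Set $Q=f(B)$. Because $\A_D$ is finite dimensional, it is closed under the continuous functional calculus on its self-adjoint elements (indeed, $B$ has finite spectrum in $\A_D$ and $f(B)$ is a finite linear combination of its spectral projections), so $Q\in\A_D$. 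Moreover $Q$ is a projection because $f^2=f=\bar f$ on $\sigma(B)$.

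Finally I estimate $\|Q-P\|$. On $\sigma(B)$ one has $|f(t)-t|<\delta$, so $\|Q-B\|<\delta$ by the functional calculus, and then
\[\|Q-P\|\leq\|Q-B\|+\|B-P\|<2\delta<\varepsilon.\]
The only step where one must be a little careful is the passage from $B$ to the projection $Q$ inside the same $\A_D$; this is exactly where finite-dimensionality of $\A_D$ is essential, since it guarantees that the spectral projections of $B$ (equivalently, the value $f(B)$) already lie in $\A_D$.
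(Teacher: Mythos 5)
Your proof is correct and follows essentially the same route as the paper's: approximate $P$ by a self-adjoint element of some $\A_D$ and then round its spectrum to $\{0,1\}$ inside $\A_D$ to produce the projection $Q$. The only (cosmetic) difference is that you locate $\sigma(B)$ near $\{0,1\}$ via the spectral perturbation bound for self-adjoint elements and round with the continuous functional calculus, whereas the paper diagonalizes the matrix picture of $\A_D$ and uses the estimate on $\|A^2-A\|$; both implementations are sound, and yours in fact yields cleaner constants.
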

\begin{proof}
Let $D\in \mathcal D$ be such that there is $A\in A_D$ satisfying $\|A-P\|<\varepsilon/6$.
By considering $(A+A^*)/2$ instead of $A$ we may assume that $A$ is self-adjoint and $\|A-P\|<\varepsilon/6$.
As $\A_D$ is finite dimensional, it is $*$-isomorphic to the direct sum of full matrix
algebras. Let $\pi$ be the isomorphism.
The matrix $\pi(A)$ is self-adjoint, so it can be diagonalized. As $\|A-P\|<\varepsilon/6$ we have
 that $\|A^2-A\|<\varepsilon/2$ and so the distance of each entry 
 on the diagonal  of the diagonalized $\pi(A)$ from $0$ or $1$ cannot 
 be bigger than $\varepsilon/2$, so
 there is a  projection  $Q\in \A_D$ such that $\|\pi(Q)-\pi(A)\|<\varepsilon/2$ and hence
 $\|Q-A\|<\varepsilon/2$ and
 $\|Q-P\|<\varepsilon$ as required. 
\end{proof}

\begin{theorem}\label{theorem-main} Suppose that there exists a construction scheme $\F$
with  allowed parameters $(r_k)_{k\in \N}$ and $(n_k)_{k\in \N}$, where
$n_k=3$ for each $k\in \N\setminus\{0\}$  and a partition $(P_m)_{m\in \N}$
of $\N$ into  infinite sets  such that for every $m\in \N$ and every uncountable
$\Delta$-system $T$ of finite subsets of $\omega_1$ there exist $F\in \F$ of arbitrarily large rank in $P_m$
which fully captures a subsystem of $T$.

Then there is an $\FF$-rich family $\GG$ of elements of $\PP$ such that the scattered thin-tall fully
noncommutative C*-algebra 
$\A^\GG$ has the following properties:
\begin{enumerate}
\item There is a  nondecreasing unbounded sequence $(l_k)_{k\in \N}\subseteq \N$ and a
directed family of finite dimensional algebras $\{A^\GG_X: X=F\times [0, l_k),
F\in \F_k, k\in \N\}$
whose union $\B$ is dense in $\A$ such that whenever
$(P_\xi: \xi<\omega_1)\subseteq \B$ is a family of projections which generate a nonseparable
subalgebra of $\A^\GG$, then for every $\varepsilon>0$
\begin{enumerate}
\item there are $\xi_1<\xi_2<\xi_3<\omega_1$ such that
 $\|P_{\xi_1}-P_{\xi_2}P_{\xi_3}\|<\varepsilon$,
\item there are $\xi_1<\xi_2<\omega_1$ such that 
$\|[P_{\xi_1}, P_{\xi_2}\|<\varepsilon$,
\item   there are 
$\xi_1<\xi_2<\omega_1$ such that 
$\|[P_{\xi_1}, P_{\xi_2}]\|>1/2-\varepsilon.$
\end{enumerate}
\item $\A^\GG$ has no uncountable irredundant subset,
\item  $\A^\GG$ has no nonseparable abelian subalgebra.
\end{enumerate}
\end{theorem}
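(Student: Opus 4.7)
The plan is to construct the $\FF$-rich family $\GG\subseteq\PP$ by recursion along the scheme $\F$, assigning to each $F\in\F$ an element $p_F\in\PP$ with $a_{p_F}=F$ and $n^{p_F}_\xi=l_k$ for every $\xi\in F\in\F_k$, where $(l_k)_{k\in\N}$ is chosen nondecreasing and unbounded. The recursion exploits the branching factor $n_k=3$ (for $k\ge 1$): for $F\in\F_{k+1}$ with canonical decomposition $(F_1,F_2,F_3)$, the three previously defined $p_{F_i}$ are pairwise in the convenient position via the order-preserving bijections $\phi_{F_i,F_j}$ supplied by the scheme, so one of the three $3$-amalgamations of Lemmas \ref{amaltype1}--\ref{amaltype3} can be formed. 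The choice depends on the block $P_m$ containing $k+1$: type~1 when $m=0$ (to secure $\FF$-richness via Lemma \ref{open-dense}), type~2 when $m=1$ (to enforce product relations on projections), and type~3 when $m=2$ using a fixed cofinal enumeration of pairs of rational orthogonal unit vectors in $\C^{l_k}$ (to enforce large commutators); for $m\ge 3$ a safe default such as type~1 suffices. Cofinality of $\F$ then guarantees that $\GG=\{p_F:F\in\F\}$ is directed and covering, the $P_0$-clause yields $\FF$-richness, and Proposition \ref{thintallfully} identifies $\A^\GG$ as a scattered thin-tall fully noncommutative C*-subalgebra of $\B_{\omega_1\times\N}$; the family $\B=\bigcup_{k\in\N}\bigcup_{F\in\F_k}\A^\GG_{F\times[0,l_k)}$ is a directed union of finite-dimensional subalgebras dense in $\A^\GG$.

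The central task is property (1). Fix $(P_\xi:\xi<\omega_1)\subseteq\B$ projections generating a nonseparable subalgebra, and $\varepsilon>0$. Each $P_\xi$ lies in some $\A^\GG_{F_\xi\times[0,l_{k_\xi})}$ with $F_\xi\in\F_{k_\xi}$. By the $\Delta$-system lemma and a countable pigeonhole I would pass to an uncountable $\Xi\subseteq\omega_1$ along which $(F_\xi:\xi\in\Xi)$ is a $\Delta$-system of constant rank $k$ and root $\Delta$, and each $P_\xi$ is $\varepsilon$-close to a common template projection $P^*$ transported by $\phi_{F^*,F_\xi}$; nonseparability forces $P^*$ to act nontrivially on some non-root column. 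Applying the capturing hypothesis to $\Xi$ for $n=3$ and $m=1$ yields $k'+1\in P_1$ and $F\in\F_{k'+1}$ fully capturing some $(F_{\xi_1},F_{\xi_2},F_{\xi_3})\subseteq\Xi$. Since $p_F$ was constructed as the type~2 amalgamation of the $p_{F_{\xi_i}}$, Lemma \ref{amaltype2} applied to $P^*$ gives $i_{r,p_3}(P^*)\,i_{r,p_2}(j_\sigma(P^*))=i_{r,p_1}(j_\sigma(P^*))^2=i_{r,p_1}(j_\sigma(P^*))$; lifting to $\A^\GG$ via the embeddings of Lemma \ref{restriction1} and using the $\varepsilon$-stabilization yields $\|P_{\xi_1}-P_{\xi_3}P_{\xi_2}\|<\varepsilon'$, and taking adjoints together with $P_{\xi_1}^*=P_{\xi_1}$ yields both (1)(a) and (1)(b) (after relabeling). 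For (1)(c) I would repeat the argument with $m=2$ instead: after a further pigeonhole fixing unit eigenvectors of $P^*$ for eigenvalues $1$ and $0$ and matching them to the vector pair $(v_1,v_2)$ fixed at the type~3 amalgamation used at the capturing $F$, Lemma \ref{amaltype3} delivers $\|[P_{\xi_1},P_{\xi_2}]\|>1/2-\varepsilon$.

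Properties (2) and (3) are short consequences of (1). For (2): Proposition \ref{irr-projections} reduces any uncountable irredundant set to one of projections, Lemma \ref{af-projections} allows a $\delta$-approximation by projections in $\B$ that retains uncountably many distinct elements, and (1)(a) then places one of them $O(\delta)$-close to the C*-algebra generated by two others, contradicting irredundance as $\delta\to 0$. For (3): any nonseparable commutative subalgebra of the scattered $\A^\GG$ is itself scattered and hence AF, so it contains uncountably many pairwise commuting projections; their $\delta$-approximations in $\B$ have all pairwise commutators of norm $O(\delta)$, contradicting (1)(c) for $\delta$ small relative to $1/2-\varepsilon$.

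The chief technical obstacle is the finite-dimensional pigeonhole preceding the capture. One must simultaneously stabilize the projections modulo the canonical bijections $\phi_{F_\xi,F_\eta}$ and, for (1)(c), also stabilize a choice of orthonormal eigenvectors of the template for eigenvalues $0$ and $1$ so that they match the rational vector pair used in the type~3 amalgamation at the capturing $F$. Coordinating the enumeration of such vector pairs during the recursive construction of $\GG$ with the partition class $P_2$, so that every rational pair is used cofinally in $P_2$ at every dimension $l_k$, is what makes this matching possible.
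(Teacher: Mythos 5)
Your overall strategy coincides with the paper's: build $p_F$ by recursion along $\F$ using the three amalgamation types keyed to the partition $(P_m)_{m\in\N}$, stabilize a given uncountable family of projections into a $\Delta$-system with a common (rational) template modulo the bijections $\phi_{F_\xi,F_\eta}$, and then invoke the capturing hypothesis to land on a level where the relevant amalgamation was performed; items (2) and (3) are then derived from (1)(a) and (1)(c) essentially as in the paper. However, there is one genuine gap, located exactly at the point you yourself flag as the chief technical obstacle: your allocation of a \emph{single} partition class ($P_2$ in your labelling) to all type-3 amalgamations, with the rational vector pairs enumerated internally along $P_2$, does not interact correctly with the capturing hypothesis. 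That hypothesis only guarantees, for each class $P_m$ and each uncountable $\Delta$-system $T$, that captures of subsystems of $T$ occur at arbitrarily large ranks \emph{somewhere} in $P_m$; it gives no control over \emph{which} elements of $P_m$ these are. After stabilization your templates determine one specific rational pair $(v,w)$, and you need a capture at a rank $k\in P_2$ where your fixed enumeration $e$ happens to assign exactly $(v,w)$; but the set of capturing ranks for $T$ inside $P_2$ could well be disjoint from $e^{-1}[\{(v,w)\}]$. The paper avoids this by dedicating an entire class $P_m$ (one for each $m\geq 3$) to the single pair $(v^m,w^m)$, defaulting to type 1 at ranks $k\in P_m$ with $l_k<m$, so that the capturing hypothesis can be applied with the index $m$ matching the stabilized pair. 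Without this, (1)(c), and hence (3), is not established.

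A secondary omission: for (1)(c) you need the template projection, restricted to the distinguished column $\{\alpha_\xi\}\times[0,l_{k'})$, to be neither zero nor the unit of that column's block, so that both a $1$-eigenvector and a $0$-eigenvector exist there to be matched with $(v,w)$. Nonseparability gives nontriviality on uncountably many columns, but ruling out ``equal to the unit of the column'' requires a further argument; the paper observes that such a unit would yield a unit in the quotient $\A^\GG/\I^{At}_{\alpha_\xi}(\A^\GG)$, which is impossible since $\A^\GG$ is an increasing union of proper ideals. This step should be added to your stabilization before the eigenvector bookkeeping.
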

\begin{proof}

Fix an enumeration $((v^m, w^m): m\geq 3)$, with possible repetitions, of all pairs
of orthogonal complex vectors with finitely many coordinates, all of them 
rational,  such that $v^m, w^m\in \C^m$ for each
$m\geq 3$ (we abuse notation and identify $\C^{m'}$ with a subset of $\C^m$
for $m'\leq m$).

We construct the sequence $(l_k)_{k\in\in \N}\subseteq \N$
and $\GG=\{p_F: F\in \F\}\subseteq \PP$ by induction with respect to $k\in \N$
such that $F\in \F_k$. Moreover, for each $k\in\N$ we require  that whenever 
$F, F'\in \F_k$ are such that $F\setminus F'<F'\setminus F$ (cf. Definition \ref{def-scheme} (2)), then
\begin{enumerate}[(*)]
\item[(*)] $p_F$ and $p_{F'}$ are in the convenient position as witnessed by $\phi_{F', F}$.
\item[(**)] $a_{p_F}=F$,
\item[(***)] $n_\xi^{p_F}=l_k$  for all $\xi\in F$ and $F\in \F_k$ and $k\in \N$.
\end{enumerate}

For $k=0$ we have that $\F_1=[\omega_1]^1$ by Definition \ref{def-scheme} (1), so we define
$p_F$ for $F=\{\xi\}$ to be the element of $\PP$ such that
\begin{itemize}
\item $a_{p_F}=\{\xi\}$,
\item $n_{p_F}^\xi=l_0=1$,
\item $A^{p_F}_{\xi, 0,0}=1_{\xi, 0, 0}$.
\end{itemize}

Suppose that we have constructed $p_{F}$s for all $F\in F_{k'}$ for $k'\leq k$
satisfying (*) - (***). Now we need to define
the $p_F$s for $F\in \F_{k+1}$. Since $n_{k+1}=3$, each $F\in \F_{k+1}$ is the union of 
the maximal elements $G_1, G_2, G_3$ of $\F|F$ which form an
increasing $\Delta$-system by Definition \ref{def-scheme} (3).
If $k\in P_1$, then we define $p_F$ as the amalgamation of $p_{G_1}, p_{G_2}, p_{G_3}$
of type 1 from Lemma \ref{amaltype1}. If $k\in P_2$, then we define $p_F$ as the amalgamation
of $p_{G_1}, p_{G_2}, p_{G_3}$ of type 2 from Lemma \ref{amaltype2}. If $k\in P_m$ for $m\geq 3$, 
and $l_k<m$, then we define $p_F$ as the amalgamation of $p_{G_1}, p_{G_2}, p_{G_3}$
of type 1 from Lemma \ref{amaltype1}. If $k\in P_m$ for $m\geq 3$, 
and $l_k\geq m$, 
then we define $p_F$ as the amalgamation of $p_{G_1}, p_{G_2}, p_{G_3}$ of type 3 
for vectors $(v^m, w^m)$ from Lemma \ref{amaltype3}.

Observe that amalgamation of type 1  increases $l_k$, so $l_k\rightarrow\infty$ when
$k\rightarrow\infty$.

First let us note that our inductive hypothesis (*) - (***) is preserved
when we pass from $k\in \N$ to $k+1$. Let $F, F'\in \F_{k+1}$ be such that
$F\setminus F'<F'\setminus F$. By Definition \ref{def-scheme}
(2) there is an order preserving bijection
$\phi_{F', F}: F\rightarrow F'$ and $F\cap F'<F\setminus F'<F'\setminus F$.
In particular  Definition \ref{def-scheme} (2) implies that
the maximal elements of $\F|F$ are sent by $\phi_{F,', F}$ onto the maximal
elements of $\F|F'$, on the other hand (3) of  \ref{def-scheme} 
implies that these maximal elements
form the canonical decomposition consisting of elements in $\F_{k}$, which
in fact are used in the construction of $p_F$ or $p_{F'}$.
Now to verify Definition \ref{convenient} in order to check (*) we note that the
amalgamations described in Lemmas \ref{amaltype1}, 
\ref{amaltype2}, \ref{amaltype3}  consist of constructions
of operators which depend only on the place of the involved objects in $F$, so
Definition \ref{convenient} and (*) are satisfied for  $p_F$ and $p_{F'}$.
 (**) and (***)
follow from the descriptions of the amalgamations from Lemmas \ref{amaltype1}, 
\ref{amaltype2}, \ref{amaltype3}. We have $l_{k+1}=l_k$ if $k\in \N\setminus P_1$
and $l_{k+1}>l_k$ if $k\in P_1$ and Definition \ref{def-scheme} guarantees that 
the amalgamations which follow Lemma \ref{amaltype1} can be done in ``the same
way" up to
the bijection $\phi_{F', F}$
 and so obtaining $n^\xi_{p_F}=l_{k+1}=n^{\xi'}_{p_{F'}}$ for any $F, F'\in \F_{k+1}$
 and $\xi\in F$ and  $\xi'\in F'$.
This completes the construction of $\GG=\{p_F: F\in \F\}$ and
 determines completely the C*-algebra
$\A^\GG$ as in Definition  \ref{def-A-G}.

Now note that $\GG$ is $\FF$-rich
as in Definition \ref{def-F-rich}.
First note that $p_{F'}\leq p_{F}$ whenever $F'\subseteq F$ and $F, F'\in \F$.
This can be proved by induction on $k\in \N$ such that $F\in \F_k$.
Note that it is true if $F'$ is a maximal element of $F|\F$, because then
$F'$ is in the canonical decomposition of $F$ by Definition \ref{def-scheme} (3)
and we use $p_{F'}$ in the construction of $p_F$ obtaining
$p_{F'}\leq p_{F}$ by the Lemmas \ref{amaltype1}, 
\ref{amaltype2}, \ref{amaltype3}.  
Now we proceed with the inductive argument, given $F'\subsetneq F$  either $F'$
is below a maximal element $G$ of $\F|F$ or it is one of the maximal elements.
The latter case is proved above and the former follows from the inductive assumption
for the pair $F', G$ and from the transitivity of the order in $\PP$.

To prove the directedness of $\GG$ take $F, F'\in \F$ and use  
the cofinality of  $\F$  in $[\omega_1]^{<\omega}$
(Definition \ref{def-scheme}) to find $F''\in \F$ such taht $F\cup F'\subseteq F''$.
By the above arguments we have $p_F, p_{F'}\leq p_{F''}$.

Now let $X=a\times[0, l)\in [\omega_1\times\N]^{<\omega}$ and $\alpha\in \omega_1$
and aim at proving further parts of the $\FF$-richness.
Consider the $\Delta$-system 
$T=\{a\cup\{\alpha, \xi\}: \max(a\cup\{\alpha\})<\xi<\omega_1\}$ of finite subsets of $\omega_1$. By the hypothesis
there is $k\in P_1$ with $l_k\geq l$ and $F\in \F$ such that $F$
fully captures a subsystem of  $T$. In
particular $F=G_1\cup G_2\cup G_3$ for some $G_1, G_2, G_3\in \F_k$ and
$X\subseteq X_{p_{G_1}}$ and $\alpha\in a_{p_{G_1}}$. 
By the construction, we do the amalgamation of type
 1 like in Lemma \ref{amaltype1} while constructing $p_F$ and
so $p_F$ is in $\FF_{X_{p_{G_1}}, \alpha}$ but this implies that it is in 
$\FF_{X, \alpha}$ as required for $\FF$-richness  in Definition \ref{def-F-rich}.

Proposition \ref{thintallfully} implies that $\A^\GG$ as in Definition \ref{def-A-G}
is a thin-tall fully noncommutative
scattered C*-algebra. 

To prove (1) the directed family of finite dimensional subalgebras of
$\A^\GG$ is $\{\A^\GG_{X_p}: p\in \GG\}$ as in Definition \ref{def-A-G}. 
By Lemma \ref{restriction1} the algebras $\A^\GG_{X_p}$ are *-isomorphic to
the algebras $\A^p_{X_p}$ and they are finite dimensional since they are equal to
 $\B_{X_p}$ by Lemma \ref{generating-B}. Let $\B=\bigcup\{\A^\GG_{X_p}: p\in \GG\}$.

Suppose that $\{P_\xi: \xi<\omega_1\}\subseteq \B$ is a collection of projections which
generate a nonseparable subalgebra of $\A^\GG$. So, there must be distinct
$\alpha_\xi\in \omega_1$ such that $P_\xi|\big(\{\alpha_\xi\}\times\N\big)\not=0$. Since
$\B_{\{\alpha_\xi\}\times\N}$ is invariant for $\A^\GG$ it follows that
$P_\xi|\big(\{\alpha_\xi\}\times\N\big)$ is a non-zero projection. Moreover it
is not the unit of $\B_{\{\alpha_\xi\}\times\N}$ because such a  unit would produce
a unit of $\A^\GG/\I^{At}_{\alpha_\xi}(\A^\GG)$ by Lemma \ref{quotientembedding} and Theorem
\ref{thintallfully}, which is impossible because $\A^\GG$ is the union of proper ideals
$\I^{At}_{\alpha}(\A^\GG)$ for $\alpha<\omega_1$.

Let $F_\xi\in \F$ be
such that $\alpha_\xi\in F_\xi$, 
$P_\xi\in \A^\GG_{X_{p_{F_\xi}}}=\A^\GG_{F_\xi\times[0,l_\xi)}$
for each $\xi\in \omega_1$, where $l_\xi=l_{k}$ for $F_\xi\in \F_k$
  and $P_\xi|\big(\{\alpha_\xi\}\times[0, l_\xi)\big)$ is a nonzero
  projection which is not the unit of $\B_{\{\alpha_\xi\}\times[0, l_\xi)}$.
  This can be obtained from the cofinality of $\F$ and the fact that 
  $l_k\rightarrow\infty$ when $k\rightarrow \infty$.

Let $Q_\xi\in \A^{p_{F_\xi}}_{F_\xi\times[0,l_\xi)}$ be such that
$i_{\GG, p_{F_\xi}}(Q_\xi)=P_\xi$. Note that by Lemma \ref{restriction1}
$Q_\xi$s are projections and  $Q_\xi|\big(\{\alpha_\xi\}\times[0, l_\xi)\big)$ is a nonzero
  projection which is not the unit of $\B_{\{\alpha_\xi\}\times[0, l_\xi)}$ for
  each $\xi<\omega_1$.
  
By passing to an uncountable subset,
we may assume that  $T=\{F_\xi: \xi<\omega_1\}$
 forms an increasing $\Delta$-system of elements of $\F_{k'}$ for a fixed $k'\in\N$  and that 
{$$|\langle Q_\xi(e_{\eta,l}), e_{\eta, l'}\rangle -
 \langle Q_{\xi'}(e_{\phi_{F_{\xi'}, F_\xi}(\eta),l}), e_{\phi_{F_{\xi'}, F_\xi}(\eta), l'}\rangle|
 <\varepsilon/2l_{k'}$$}
for every  $(\eta,l), (\eta,l')\in F_\xi\times[0,l_{k'})$  and every $\xi<\xi'<\omega_1$.
This guarantees that
$$\|j_{\phi_{F_{\xi}, F_{\xi'}}}(Q_{\xi})-Q_{\xi'}\|<\varepsilon/2
\leqno (+)$$
 for every $\xi<\xi'<\omega_1$.
 Now let us prove item (a) of (1). 
 By the hypothesis on $\F$ there is $k\in P_2$ bigger than $k'$	
 and $F\in \F_{k+1}$ which fully captures $T$, i.e. the canonical decomposition of $F$ is
 $\{G_1, G_2, G_3\}$ and there are $\xi_1<\xi_2<\xi_3<\omega_1$ such that $F_{\xi_i}\subseteq G_i$
 and $\phi_{G_j, G_i}[F_{\xi_i}]=F_{\xi_j}$ for all $1\leq i, j\leq 3$. As $\phi_{G_j, G_i}$
 are order preserving, they must agree with $\phi_{F_{\xi_j}, F_{\xi_i}}$ on $F_{\xi_i}$, so
 (+) implies that 
{$$\|j_{\phi_{G_3, G_i}}(Q_{\xi_3})
 -Q_{\xi_i}\|<\varepsilon/2$$}
holds for  $i=1, 2$. Since we use amalgamation
of type 2 at the construction of $p_F$ for $k\in P_2$ by Lemma \ref{amaltype2} we have
{$$i_{p_F, p_{G_3}}(Q_{\xi_3})
i_{p_F, p_{G_2}}(j_{\phi_{G_3, G_2}}(Q_{\xi_3}))=
i_{p_F, p_{G_1}}(j_{\phi_{G_3, G_1}}(Q_{\xi_3}))^2,$$}
and so
{$$\|i_{p_F, p_{G_3}}(Q_{\xi_3})
i_{p_F, p_{G_2}}(Q_{\xi_2})-
i_{p_F, p_{G_1}}(Q_{\xi_1})^2\|<\varepsilon$$}
and hence $\|P_{\xi_3}P_{\xi_2}-P_{\xi_1}\|<\varepsilon$ since 
{$$i_{\GG,p_F}\circ i_{p_F, p_{G_{\xi_i}}}(Q_{\xi_i})=i_{\GG, p_{G_{\xi_i}}}(Q_{\xi_i}) =
i_{\GG, p_{G_{\xi_i}}}(i_{p_{G_{\xi_i}},p_{F_{\xi_i}}}(Q_{\xi_i})) = i_{\GG, p_{F_{\xi_i}}}(Q_{\xi_i})=P_{\xi_i}$$}
by Definition \ref{def-A-xi-G}
and Lemma \ref{restriction1}. This completes the proof of (a) of (1). Item (b) follows from
(a) for $\varepsilon/2$ and by taking the adjoints.

Now let us prove item (c) of (1). For $\xi<\omega_1$ let 
$Q_\xi'\in \B_{F_\xi\times[0,l_{k'})}$ be such  projections that
$\|Q_\xi-Q_\xi'\|<\varepsilon/8$ and there is on orthonormal basis
in $\B_{\{\alpha_\xi\}\times[0, l_{k'})}$ of eigenvectors
for $Q_{\xi}'$ consisting only of vectors with all rational coordinates with respect
to our canonical basis $(e_{\alpha_\xi, l}: 0\leq l< l_{k'})$. 
Note that by Lemma \ref{generating-B} we have that $Q_\xi'\in \A^{p_{F_\xi}}_{X_{p_{F_\xi}}}$.
Since
 $Q_\xi|\big(\{\alpha_\xi\}\times[0, l_{k'})\big)$ is a nonzero
  projection which is not the unit of $\B_{\{\alpha_\xi\}\times[0, l_{k'})}$ for
  each $\xi<\omega_1$, $Q_\xi'$
  may be assumed to have the same rank as $Q_\xi$ and so there are orthogonal unit vectors $v^\xi, w^\xi\in \C^{l_{k'}}$ with all rational coordinates
  such that
$$Q_\xi'(\sum_{l<l_{k'}}v^\xi_le_{\alpha_\xi,l})=\sum_{l<l_{k'}}v^\xi_le_{\alpha_\xi,l},\ \ 
Q_\xi'(\sum_{l<l_{k'}}w^\xi_le_{\alpha_\xi,l})=0.$$
As there are only countably many such vectors we may assume that all of them are equal
to a pair $(v, w)$ and moreover that
$$\|j_{\phi_{F_{\xi}, F_{\xi'}}}(Q_{\xi}')-Q_{\xi'}'\|<\varepsilon/4
\leqno (++)$$
 for every $\xi<\xi'<\omega_1$.

By the hypothesis on $\F$ there is $k\in P_m$ bigger than $k'$ such that
$v_m=v=(v_1, ... v_{l_{k'}})$ and $w_m=w= (w_1, ... w_{l_{k'}})$ 
 and there is $F\in \F_{k+1}$ which fully captures $T$, i.e., 
 the canonical decomposition of $F$ is
 $\{G_1, G_2, G_3\}$ 
  and there are $\xi_1<\xi_2<\xi_3<\omega_1$ such that $F_{\xi_i}\subseteq G_i$
 and $\phi_{G_j, G_i}[F_{\xi_i}]=F_{\xi_j}$ for all $1\leq i, j\leq 3$. 
 Note that
 $\alpha_{\xi_i}$s are not in the root of $\{G_1, G_2, G_3\}$ as they are not in
 the root of $F_\xi$s.
 As $\phi_{G_j, G_i}$
 are order preserving, they must agree with $\phi_{F_{\xi_j}, F_{\xi_i}}$ on $F_{\xi_i}$, so
 (++) implies that 
 {$$\|j_{\phi_{G_1, G_i}}(Q_{\xi_1}')
 -Q_{\xi_i}'\|<\varepsilon/4$$}
holds for  $i= 2, 3$. Since we use amalgamation
of type 3 at the construction of $p_F$ for $k\in P_m$ by Lemma \ref{amaltype3} we have
 {$$\|[i_{p_F, p_{G_1}}(Q_{\xi_1}'), 
i_{p_F, p_{G_2}}(j_{\phi_{G_1, G_2}}(Q_{\xi_1}'))]\|=1/2$$}
and so
 {$$\|[i_{p_F, p_{G_1}}(Q_{\xi_1}'), 
i_{p_F, p_{G_2}}(Q_{\xi_2}')]\|\geq1/2-\varepsilon/2$$}
and hence 
{$$\|[i_{p_F, p_{G_1}}(Q_{\xi_1}), 
i_{p_F, p_{G_2}}(Q_{\xi_2})]\|\geq1/2-\varepsilon$$}
as $\|Q_\xi-Q_{\xi'}'\|<\varepsilon/8$ for each $\xi<\xi'<\omega_1$, and finally
$$\|[P_{\xi_1}, P_{\xi_2}]\|\geq1/2-\varepsilon$$
 since 
{$$i_{\GG,p_F}\circ i_{p_F, p_{G_{\xi_i}}}(Q_{\xi_i})=i_{\GG, p_{G_{\xi_i}}}(Q_{\xi_i}) =
i_{\GG, p_{G_{\xi_i}}}(i_{p_{G_{\xi_i}},p_{F_{\xi_i}}}(Q_{\xi_i})) = i_{\GG, p_{F_{\xi_i}}}(Q_{\xi_i})=P_{\xi_i}$$}
by Definition \ref{def-A-xi-G}
and Lemma \ref{restriction1}. This completes the proof of (c) of (1).

The proof of (2) will be based on (1) (a) and Lemma \ref{af-projections}. Suppose that $\A^\GG$ contains 
an uncountable irredundant set $\{Q_\xi: \xi<\omega_1\}$.
By Lemma \ref{irr-projections} we may assume that all $Q_\xi$s are projections. 
For each $\xi$ let $\A_{\omega_1\setminus\{\xi\}}$
be the C*-subalgebra of $\A^\GG$
generated by the set $\{Q_\eta: \eta\in\omega_1\setminus\{\xi\}\}$.
By passing to an uncountable subset we may assume that there is $\varepsilon>0$ such that for
each $\xi<\omega_1$ we have $\|A-Q_\xi\|\geq\varepsilon$ for each 
$A\in \A_{\omega_1\setminus\{\xi\}}$. 
Let $P_\xi\in \B$ be a projection satisfying $\|P_\xi-Q_\xi\|<\varepsilon/4$ which is obtained using
Lemma \ref{af-projections}. By (1) (a) there are $\xi_1<\xi_2<\xi_3<\omega_1$ such that
$\|P_{\xi_1}-P_{\xi_2}P_{\xi_3}\|<\varepsilon/4$.
 This implies that $\|Q_{\xi_1}-Q_{\xi_2}Q_{\xi_3}\|<\varepsilon$
which contradicts the defining property of $\varepsilon$ and completes the proof of (2). 

The proof of (3) will be based on (1) (c) and Lemma \ref{af-projections}. Suppose that $\A^\GG$ contains a
nonseparable abelian subalgebra. As subalgebras of scattered algebras are scattered,
 and scattered locally compact spaces are totally disconnected, it follows that $\A^\GG$  contains an 
 uncountable Boolean algebra of (commuting) projections $\{Q_\xi: \xi<\omega_1\}$.
 In particular $\|Q_\xi-Q_{\xi'}\|=1$ for all $\xi<\xi'<\omega_1$.

Let $P_\xi\in \B$ for $\xi<\omega_1$ be  projections satisfying 
$\|P_\xi-Q_\xi\|<1/10$ for each $\xi<\omega_1$ which is obtained using
Lemma \ref{af-projections}. In particular 
{ $\|P_\xi-P_{\xi'}\|\geq 8/10$}
for all $\xi<\xi'<\omega_1$ and so they generate a nonseparable C*-algebra.

We have $\|P_{\xi_1}P_{\xi_2}-Q_{\xi_1}Q_{\xi_2}\|<1/5$
and $\|P_{\xi_2}P_{\xi_1}-Q_{\xi_2}Q_{\xi_1}\|<1/5$ for each $\xi_1<\xi_2<\omega_1$, so
$[P_{\xi_1},P_{\xi_2}]< 2/5$ for each $\xi_1<\xi_2<\omega_1$.
But by (1) (c) there are $\xi_1<\xi_2<\omega_1$ such that
$\|[P_{\xi_1},P_{\xi_2}]\|\geq 2/5$, a contradiction.

\end{proof}

\bibliographystyle{amsplain}

\end{document}